\renewcommand{\leq}{\leqslant}
\renewcommand{\geq}{\geqslant}
\DeclareMathOperator{\sign}{sgn}
\DeclareMathOperator{\supp}{supp}
\newtheorem{theorem}{Theorem}
\newtheorem{lemma}{Lemma}
\newtheorem{proposition}{Proposition}
\newtheorem{corollary}{Corollary}
\theoremstyle{definition}
\newtheorem{definition}{Definition}
\newtheorem{remark}{Remark}
\newenvironment{enbibliography}{\vspace{-0.5cm}}
\begin{document} 
\title{Kru\v{z}kov-type uniqueness theorem for a non-monotone flow function case with application to Riemann problem solutions}
\author{Yulia Petrova\footnote{ Institute of Mathematics and Statistics of the University of S\~{a}o Paulo, Rua do Mat\~{a}o, 1010, 05508-090, S\~{a}o Paulo, Brazil. E-mail: yu.pe.petrova@gmail.com.}, Nikita Rastegaev\footnote{St. Petersburg Department of Steklov Mathematical Institute
of Russian Academy of Sciences, 27 Fontanka, 191023, St. Petersburg, Russia. E-mail: rastmusician@gmail.com.}}
\renewcommand{\today}{}
\maketitle
\abstract{
We generalize the previously obtained Kru\v{z}kov-type uniqueness result for the initial-boundary value problem for the chemical flood conservation law system to the case of an almost arbitrary flow function, not restricted by the S-shaped condition or the monotonicity with respect to the chemical agent concentration. The result is applied to the analysis of the Riemann problem solutions for an S-shaped flow function changing monotonicity with respect to the chemical concentration exactly once. All possible Riemann problem solution structures are classified, including certain unique structures that have not been described in earlier studies.

\vspace{5pt}
Keywords: Initial-boundary value problem; Riemann problem; first-order hyperbolic system; conservation laws; shock waves; uniqueness theorem; vanishing viscosity; chemical flood.

MSC classes: 35L50 (Primary) 35L65, 35L67, 76L05 (Secondary).
}

\section{Introduction}
We generalize the main result of \cite{MR2024}, where the uniqueness of solutions of the conservation law system
\begin{equation}\label{eq:main_system_chem_flood}
\begin{cases} 
s_t + f(s, c)_x = 0, \\
(cs + a(c))_t + (cf(s,c))_x  = 0,
\end{cases}
\end{equation}
was studied. This system describes the chemical flood of oil reservoir in enhanced oil recovery methods. Here $(x,t)\in\mathbb{R}_+^2$, $s$ is the saturation of the water phase, $c$ is the concentration of the chemical agent dissolved in water, $f$ denotes the fractional flow function, and $a$ describes the adsorption of the chemical agent on the rock, usually concave like the classical Langmuir curve (see Fig.~\ref{fig:BL_ads}b). While in \cite{MR2024} we \mbox{assumed $f$} to be S-shaped (after Buckley--Leverett \cite{BL}) as well as monotone with respect \mbox{to $c$}, in this work we aim to lift those restrictions and consider a more general class of flow functions.

Similarly to \cite{MR2024}, we study the solutions of the initial-boundary value problem
\begin{align}
\label{eq:Initial_boundary_problem}
\begin{split}
&s(x,0) = s^x_0(x), \quad c(x,0) = c^x_0(x), \quad x\geqslant 0,\\
&s(0,t) = s^t_0(t), \quad c(0,t) = c^t_0(t),  \quad t\geqslant 0,
\end{split}
\end{align}
and under certain restrictions on the parameters of the problem and the class of solutions we prove the same uniqueness theorem, that is we prove that two different solutions from the described class with the same initial-boundary data could not exist. 

After that, we will consider a simple case of an S-shaped $f$ changing monotonicity with respect to $c$ exactly once, and classify the solutions to the problem describing constant injection into a homogeneously filled reservoir
\begin{align*}
&s_0^x(x) = s_R, \quad c_0^x(x) = c_R, \quad x\geqslant 0, \\
&s_0^t(t) = s_L, \quad c_0^t(t) = c_L, \quad t\geqslant 0,
\end{align*}
which is equivalent to the Riemann problem
\begin{equation}
\label{eq:Riemann-problem}
    (s,c)(x,0)=
    \begin{cases}
        (s_L,c_L),& \text{if } x\leq 0,\\
        (s_R,c_R),& \text{if } x>0.
    \end{cases}
\end{equation}
Riemann problems are highly important in understanding hyperbolic systems of conservation laws, used in Glimm's random choice method, front tracking methods, etc.~(see \cite{Dan-topological-tool} and references therein for a more comprehensive list of possible applications). The Riemann problem for the system \eqref{eq:main_system_chem_flood} with S-shaped $f$ monotone with respect to $c$ was studied in \cite{JnW} and solutions for it are known. The uniqueness of vanishing viscosity solutions for it was also considered in \cite{Shen}.  For the general case of non-monotone $f$ and multicomponent chromatography \cite{Tveito} provides an algorithm for the construction of Riemann problem solutions. However, they use a different approach to distinguish physically meaningful weak solutions, therefore, our solutions may differ in some cases. Conditions \eqref{eq:Initial_boundary_problem} also cover more complicated problems, including the slug injection problem \cite{PiBeSh06} or tapering \cite{Tapering}. 

The proof of the uniqueness theorem will follow the scheme used in \cite{MR2024} almost exactly. The Lagrange coordinate transformation described in detail in that paper will be utilized, as well as the proof scheme similar to the well-known Kru\v{z}kov's theorem \cite{Kruzhkov}. We will omit proofs that require no changes and only detail the proofs of the lemmas that need significant generalization.

We keep the admissibility criteria used in \cite{MR2024} based on the paradigm of the classical work by Oleinik \cite{Oleinik} with the local variant of the vanishing viscosity condition introduced in \cite{Bahetal} (see (W4) in Definition \ref{def:solution} below). The classification of admissible shocks obtained in \cite{Bahetal} is then used when constructing the Riemann problem solutions. We study all possible Riemann solution structures and describe which values of initial parameters in \eqref{eq:Riemann-problem} yield them. Compared to the monotone case \cite{JnW}, the layout of solution structures in the space of initial parameters is richer and includes areas with novel solution structures not observed before (see \eqref{eq:solution-RP-cscsc} and Figure~\ref{fig:RP-layouts-rare-5}c).

The paper has the following structure. Sect.~\ref{sec:restrictions} lists all restrictions we place on the parameters of the problem for the generalized uniqueness theorem, i.e. on the initial-boundary conditions, on the flow function $f$ and on the adsorption \mbox{function $a$}. Sect.~\ref{sec:admissibility} recalls the definition of the class of admissible solutions and the travelling wave dynamic system for the dissipative system. It also provides a lemma deriving a restriction on the set of admissible shocks. Sect.~\ref{sec:Lagrange} describes the Lagrange coordinate transformation. The qualities of the new flow function are derived similar to the monotone case. Sect.~\ref{sec:entropy} describes the mapping of the shocks in original coordinates and shocks in the Lagrange coordinates. Here, the main change to the proof occurs, as the admissibility conditions for $c$-shocks need to be transferred to the new coordinates and the Kru\v{z}kov-type entropy inequality for them is derived with a slightly different proof. Finally, Sect.~\ref{sec:Riemann-problem} contains the application to the case of an S-shaped $f$ changing monotonicity with respect to $c$ exactly once. All possible solution structures are classified in this section.

\section{Restrictions on problem parameters}
\label{sec:restrictions}

\subsection{Restrictions on the initial-boundary data}
The following restrictions on the functions from \eqref{eq:Initial_boundary_problem} are assumed:
\begin{itemize}
    \item[(S1)] $s^x_0(x) = 0$ for all $x\geqslant x^0$ for a fixed $x^0\in[0,+\infty]$;
    \item[(S2)] if $x^0>0$, then $s^x_0(x) \geqslant \delta^0 > 0$ for all $0 \leqslant x < x^0$;
    \item[(S3)] $s^t_0(t) \geqslant \delta^0 > 0$ for all $t\geqslant 0$.
\end{itemize}

\begin{remark}
\label{remark:c_L_0}
For the sake of including the Riemann problems with $s_L = 0$, the case when $x^0 = \infty$, $s_0^t(t) \equiv 0$ needs to be considered. In this case the zero flow area should be investigated similar to Sect.~\ref{sec:zero-flow}, and is expected to coincide with the axis $\{x=0\}$. Everywhere else the solution will be positive. This creates some special behavior near the vertical axis in Lagrange coordinates, but otherwise we expect the proofs to hold. The details of this case will be considered in some future work.
\end{remark}

\subsection{Restrictions on the flow function}
See Fig.~\ref{fig:BL_ads}a for an example of a function~$f$ under the S-shaped restriction, monotone with respect to $c$. The following assumptions (F1), (F2), (F3') for the fractional flow function $f$ lift those restrictions and allow a much wider class of flow functions. 
\begin{enumerate}
    \item[(F1)] $f\in \mathcal C^2([0,1]^2)$; $f(0, c)=0$, $f(1, c)= 1$ for all $c\in[0,1]$;
    \item[(F2)] $f_s(s, c)>0$ for $0<s<1$, $0 \leq c \leq 1$;  $f_s(0,c)=f_s(1,c)=0$  for all $c\in[0,1]$;
    \item[(F3')] there exists $s^f > 0$, such that $f_{ss}(s, c) > 0$ for all $s\in(0, s^f), c\in[0,1]$.
\end{enumerate}

The assumption (F3') is necessary to maintain some technical steps of the proof of the uniqueness theorem developed in \cite{MR2024} in lieu of the S-shaped assumption (F3) used there. We utilize it exactly once in the proof of Lemma~\ref{lemma:Lax_for_small_s}, which replaces the proof of \cite[Lemma 3.3]{MR2024}.

\subsection{Restrictions on the adsorption function}
The adsorption function $a=a(c)$ satisfies the following assumptions (see Fig.~\ref{fig:BL_ads}b for an example of a function~$a$):
\begin{itemize}
    \item[(A1)] $a \in \mathcal C^2([0,1])$, $a(0) = 0$;
    \item[(A2)] $a_c(c)>0$ for $0<c<1$;
    \item[(A3)] $a_{cc}(c)<0$ for $0<c<1$.
\end{itemize}

\begin{figure}[htbp]
    \centering
    \includegraphics[width=0.4\textwidth]{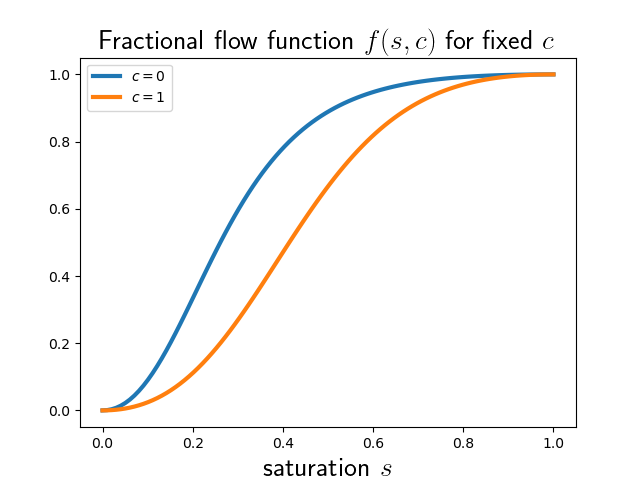}
    \includegraphics[width=0.4\textwidth]{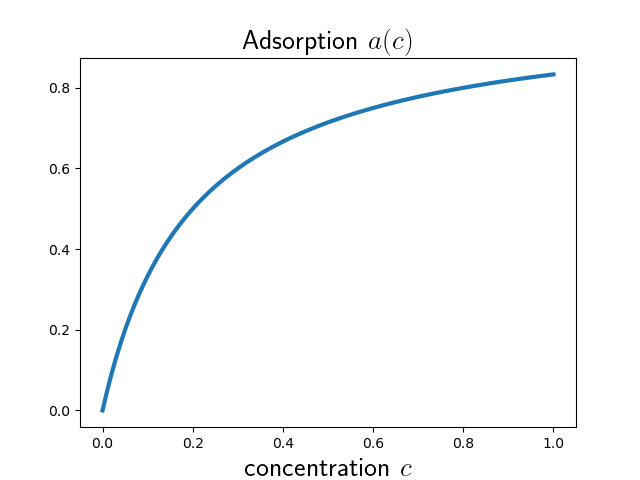}\\
    (a)\qquad\hfil\qquad  
    (b)\hfil
    \caption{Examples of (a) flow function $f(s,c)$;
    (b) adsorption function $a$.}
    \label{fig:BL_ads}
\end{figure}

\section{Admissible solutions of chemical flood system}
\label{sec:admissibility}

\subsection{Admissible weak solutions}
In this work, we use the local vanishing viscosity method proposed in \cite{MR2024}. The admissible weak solutions are required to be a classical solution almost everywhere except for a locally finite number of shocks (jump discontinuities) with a local version of the vanishing viscosity condition on the shocks.

\begin{definition}[Definition 3.1, \cite{MR2024}]
\label{def:solution}
We call $(s, c)$ a piece-wise $\mathcal C^1$-smooth weak solution of \eqref{eq:main_system_chem_flood} with vanishing viscosity admissible shocks and locally bounded ``variation'' of $c$ (\emph{W-solutions} for brevity), if:
\begin{itemize}
    \item[(W1)] Functions $s$ and $c$ are continuous and piecewise continuously differentiable everywhere, except for a locally finite number of $\mathcal C^1$-smooth curves, where one or both of them have a jump discontinuity.
    \item[(W2)] For any compact $K$ away from the axes, the derivative $|c_x(x,t)| < C_K$ is uniformly bounded for all $(x,t) \in K$ not on jump discontinuities.
    \item[(W3)] Functions $s$ and $c$ satisfy \eqref{eq:main_system_chem_flood} in a classical sense inside the areas, where they are continuously differentiable.
    \item[(W4)] On every discontinuity curve $\Gamma$ given by $\gamma(t)$ at any point $(\gamma(t_0), t_0)$ the jump of $s$ and $c$ 
    \[
    s^\pm = s(\gamma(t_0)\pm 0, t_0), \quad c^\pm = c(\gamma(t_0)\pm 0, t_0)
    \]
    with velocity $v = \gamma_t(t_0)$ could be obtained as a limit as $\varepsilon\to 0$ of travelling wave solutions
    \[
    s(x,t) = s\Big(\frac{x - vt}{\varepsilon}\Big), \quad c(x,t) = c\Big(\frac{x - vt}{\varepsilon}\Big)  
    \]
    of the dissipative system
\begin{equation}
\label{eq:main_system_dissipative}
\begin{cases} 
s_t + f(s, c)_x = \varepsilon_c (A(s,c)s_x)_x, \\
(cs + a(c))_t + (cf(s,c))_x  = \varepsilon_c (c A(s,c)s_x)_x+\varepsilon_d c_{xx},
\end{cases}
\end{equation}
with boundary conditions
\[
s(\pm\infty) = s^\pm, \quad c(\pm\infty)  = c^\pm.
\]
Here $\varepsilon_c$ and $\varepsilon_d$ are the dimensionless capillary pressure and diffusion, respectively, and $A(s,c)$ is the capillary pressure function (bounded, separated from zero and Lipschitz continuous).
\end{itemize}
\end{definition}

\begin{remark}
Note that the condition (W1) here is weaker than the condition (W1) in~\cite{MR2024}. It allows $s$ and $c$ to have discontinuities in the derivative. Careful examination of the proofs in \cite{MR2024} shows that this changes nothing and the uniqueness theorem still holds in this wider class of solutions. This modification was recently introduced in \cite{MR2025}, where obvious jumps in $c$ derivative were observed in the slug injection problem. Similarly, we noticed possible jumps in the derivatives at the meeting points of different rarefaction waves in the Riemann problems considered here.
\end{remark}

Note that \eqref{eq:main_system_dissipative} differs from \cite[(4.8)]{JnW}. According to \cite{Bahetal} it yields a different set of admissible shocks in some non-monotone cases, but in the monotone case the admissible shocks are the same. The results of this paper apply to the full system \cite[(3)]{Bahetal}, which accounts for capillary pressure, polymer diffusion and dynamic adsorption, but in \eqref{eq:main_system_dissipative} we chose to disregard the dynamic adsorption for brevity. Note also that \cite{Tveito} does not use the dissipative system to distinguish admissible shocks, and uses the projection principle and the lifting algorithm instead, yielding different admissibility criteria in some cases.

\begin{theorem}
\label{thm:1}
Problem \eqref{eq:main_system_chem_flood} with initial-boundary conditions \eqref{eq:Initial_boundary_problem} satisfying the restrictions (S1)--(S3), with flow function satisfying (F1), (F2), (F3') and adsorption satisfying (A1)--(A3) can only have a unique W-solution.
\end{theorem}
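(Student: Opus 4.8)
The plan is to follow the Kru\v{z}kov-type scheme of \cite{MR2024}, reusing verbatim every step that does not interact with the S-shaped or $c$-monotonicity hypotheses and re-deriving only the two places where those hypotheses were genuinely used. First I would pass to the Lagrange coordinates $(\varphi,\tau)$ introduced in \refpar\ref{sec:Lagrange}, where $\varphi$ is the potential determined by the first conservation law ($\varphi_x=s$, $\varphi_t=-f$), so that the integrability condition $s_t+f_x=0$ is exactly the first equation of \eqref{eq:main_system_chem_flood}. In these coordinates the system acquires a new flow function whose regularity and qualitative shape are obtained essentially as in the monotone case of \cite{MR2024}; the only new input is (F3')---strict convexity of $f$ in $s$ on the strip $s\in(0,s^f)$---which, through the (monotone) Jacobian of the change of variables, becomes a definite sign of the second derivative of the new flux near the vanishing-saturation boundary, all that the one-dimensional Buckley--Leverett chord analysis needs. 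I do not expect the global S-shape to enter anywhere else.

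Second, I would establish the two replacement lemmas. The restriction on the admissible shock set (the lemma announced in \refpar\ref{sec:admissibility}) follows from (W4) together with the travelling-wave classification of \cite{Bahetal} for the dissipative system \eqref{eq:main_system_dissipative}, and is insensitive to monotonicity. The delicate replacement is Lemma~\ref{lemma:Lax_for_small_s}: here (F3') is used exactly once, to show that every admissible shock with a sufficiently small saturation endpoint is a Lax shock obeying the Oleinik chord condition \cite{Oleinik}, since on $(0,s^f)$ the flux is strictly convex and the classical Buckley--Leverett analysis applies unchanged. This is precisely the control of the solution near the zero-flow region created by (S1)--(S2) that the S-shaped hypothesis used to provide.

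Third, I would transport the admissibility conditions to the Lagrange plane (\refpar\ref{sec:entropy}) and produce the Kru\v{z}kov entropy inequalities for both families of shocks. For the $s$-field the derivation is unchanged from \cite{MR2024}. For the $c$-field the monotonicity of $f$ in $c$ is no longer available to fix the sign of the transformed flux differences, so here I would argue directly from the vanishing-viscosity admissibility (W4): from the monotone travelling-wave profile connecting $c^-$ to $c^+$ and the connection classification of \cite{Bahetal}, I would read off the sign of the relevant flux jump and thereby obtain the entropy-dissipation inequality for the Kru\v{z}kov entropies $|c-k|$ and their fluxes with the correct sign, replacing the former monotonicity argument.

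Finally, with both entropy inequalities in hand, the rest is Kru\v{z}kov's doubling of variables \cite{Kruzhkov} applied to two W-solutions sharing the data \eqref{eq:Initial_boundary_problem}: after doubling $(\varphi,\tau)$, adding the $s$- and $c$-inequalities, and passing to the diagonal, integration yields an $L^1$-type contraction on the Lagrange strip. The boundary contributions are controlled as in \cite{MR2024}, using (S3) to keep the inflow flux non-degenerate and localizing near the zero-flow region, and they force the two solutions to coincide almost everywhere; undoing the coordinate change returns uniqueness in the original variables. I expect the main obstacle to be exactly this $c$-shock entropy inequality: without $c$-monotonicity the sign of the transformed flux jump cannot be read off from first principles and must instead be extracted from the admissible travelling-wave connections of \cite{Bahetal}---the single step flagged in \refpar\ref{sec:entropy} as needing a new proof.
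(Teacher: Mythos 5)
Your overall architecture is indeed the paper's: reuse the Kru\v{z}kov scheme of \cite{MR2024} through the Lagrange transformation, isolate the single use of (F3') in the small-$s^-$ Lax lemma (Lemma~\ref{lemma:Lax_for_small_s}), and rework the one step involving $c$-shocks. But there is a genuine gap at precisely the step you flag as the main obstacle, in two respects. First, you mislocate the difficulty. The Kru\v{z}kov entropy inequality for the entropies $|c-k|$ of the $c$-family needs no change at all: in Lagrange coordinates the second equation decouples into the scalar law $\zeta_x+a(\zeta)_\varphi=0$ whose flux $a$ is independent of $f$, so Proposition~\ref{prop-zeta-entropy-integral} transfers verbatim from \cite{MR2024}; re-deriving it from travelling-wave profiles buys nothing. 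The step that genuinely breaks without monotonicity of $f$ in $c$ is the \emph{cross-field} inequality on $\zeta$-shocks, $[\mathcal G(\mathcal U,\mathcal V)]\leqslant \frac{d\Phi}{dx}\,[|\mathcal U-\mathcal V|]$ (Lemma~\ref{lemma_zeta_shock_entropy_condition}, replacing \cite[Lemma 9]{MR2024}), which compares two different solutions $\mathcal U$ and $\mathcal V$ of the \emph{first} Lagrange equation across a common $c$-shock, i.e.\ compares two admissible shocks sharing $(c^-,c^+)$ but having different saturation endpoints and speeds. Your proposal never addresses this comparison of two admissible $c$-shocks, which is where the uniqueness argument actually interacts with non-monotonicity.

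Second, the tool you invoke for the delicate step --- reading off signs from the connection classification of \cite{Bahetal} --- is unavailable in this generality: the paper states explicitly (Sect.~\ref{sec:sec2-nullclines}) that once the S-shape and the restricted non-monotonicity are dropped, the full classification of nullcline configurations becomes untenable. What replaces it is Lemma~\ref{lemma2}, a \emph{pairwise} inadmissibility statement proved directly from the phase portrait of the travelling-wave system \eqref{eq:dyn_sys_cap_diff}: two shocks with the same $c^\pm$ (hence the same $d_1,d_2$) with interlaced endpoints $s^->z^-$, $s^+<z^+$ and $v<w$ cannot both be admissible, because their connecting trajectories $s(c)$ and $z(c)$ would have to cross, and at a crossing where $s(c)$ passes $z(c)$ from below the slope comparison --- with the sign of the denominator $d_1\bar c-d_2-a(\bar c)<0$ fixed by the concavity assumption (A3) --- forces $v\geqslant w$. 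Lemma~\ref{lemma_zeta_shock_entropy_condition} then follows from a four-case analysis of the endpoint configurations: the interlaced case is excluded by Lemma~\ref{lemma2}, the aligned cases give equality via Rankine--Hugoniot, and the remaining case is settled by a chord-slope argument using the positivity of the shock speed \eqref{eq:zeta-shock-velocity-positive}. Without a substitute for Lemma~\ref{lemma2} your argument cannot close, since nothing in (W4) alone rules out the interlaced configuration. (Minor: your sign convention for the potential is reversed --- the paper has $d\varphi=f\,dt-s\,dx$, and it keeps $x$ as the time-like variable, working in $(\varphi,x)$ rather than $(\varphi,\tau)$ --- but this is immaterial.)
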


\subsection{Travelling wave dynamic system}
\label{sec:sec2-Hopf}
The assumption (W4) for the solution is that shocks are admissible if and only if they could be obtained as a limit of travelling wave solutions for a system with additional dissipative terms as these terms tend to zero. In this section we analyze such travelling wave solutions and derive a dynamic system that describes them.

Consider a shock between states $(s^-, c^-)$ and $(s^+, c^+)$ moving with velocity $v$. In order to check if it is admissible, we are looking for a travelling wave solution 
\[
s(x,t) = s\Big(\frac{x - vt}{\varepsilon}\Big), \quad c(x,t) = c\Big(\frac{x - vt}{\varepsilon}\Big)
\]
for the dissipative system \eqref{eq:main_system_dissipative}
satisfying the boundary conditions 
\[
s(\pm\infty) = s^\pm, \quad c(\pm\infty)  = c^\pm.
\]
Substituting this travelling wave ansatz into the system \eqref{eq:main_system_dissipative} and denoting $\xi = \frac{x - vt}{\varepsilon}$, we get the system
\begin{equation*}
\begin{cases} 
-v s_\xi + f(s, c)_\xi = s_{\xi\xi}, \\
-v (cs + a(c))_\xi + (cf(s,c))_\xi  = (c s_\xi)_\xi + c_{\xi\xi}.
\end{cases}
\end{equation*}
Integrating the equations over $\xi$ we arrive at the travelling wave dynamic system
\begin{equation}\label{eq:dyn_sys_cap_diff}
\begin{cases} 
s_\xi = f(s, c) - v (s + d_1), \\
c_\xi = v (d_1 c - d_2 - a(c)).
\end{cases}
\end{equation}
The values of $d_1$ and $d_2$ are obtained from the boundary conditions:
\begin{align*}
    vd_1 & = -vs^\pm + f(s^\pm, c^\pm), \\
    vd_2 & = v d_1 c^\pm - v a(c^\pm),
\end{align*}
namely, in the case when $c^+ \neq c^-$,
\begin{equation}\label{eq:d1_d2_def}
d_1 = \dfrac{a(c^-) - a(c^+)}{c^- - c^+}, \quad d_2 =  \dfrac{c^+ a(c^-) - c^- a(c^+)}{c^- - c^+}.
\end{equation}
Additionally, the same boundary conditions yield us the Rankine--Hugoniot conditions
\begin{equation}
\label{eq:RH-1}
\begin{split}
    v[s]&=[f(s,c)],
    \\
    v[cs+a(c)]&=[cf(s,c)],
\end{split}
\end{equation}
where $[q(s,c)]=q(s^+,c^+)-q(s^-,c^-)$\footnote{Note the order of ``$+$'' and ``$-$'' terms in this definition. It could be different in different sources. We follow certain proof schemes of \cite{Serre1}, so our order coincides with their.}. Thus, for every set of shock parameters $(s^{\pm}, c^{\pm})$ and $v$ satisfying \eqref{eq:RH-1}, we can construct a phase portrait for the dynamic system \eqref{eq:dyn_sys_cap_diff}. The points $(s^\pm, c^\pm)$ are critical for this dynamic system due to \eqref{eq:RH-1}, and we can check if there is a trajectory connecting the corresponding critical points. But even just analyzing the geometric meaning of the Rankine--Hugoniot conditions \eqref{eq:RH-1}, we derive a lot of restrictions on admissible shock parameters.

\begin{proposition}\label{prop:inadmissible_shocks}
The following restrictions on admissibility are evident from the properties (F1), (F2), (F3'), (A1)--(A3), the Rankine--Hugoniot conditions \eqref{eq:RH-1} and the analysis of the sign of the right-hand side of \eqref{eq:dyn_sys_cap_diff}:
\begin{itemize}
    \item Admissible shock velocity $v$ is bounded and strictly positive: $0 < v < \|f\|_{\mathcal C^1}$.
    \item Shocks with $s^- = 0$ cannot be admissible.
    \item Shocks with $c^+ > c^-$ cannot be admissible.
    \item If $s^+ = 0$ then $c^+=c^-$.
\end{itemize}
\end{proposition}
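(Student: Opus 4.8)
The plan is to establish each of the four claimed restrictions by separately analyzing the geometry of the Rankine--Hugoniot conditions \eqref{eq:RH-1} together with the sign structure of the right-hand side of the travelling wave system \eqref{eq:dyn_sys_cap_diff}. I would organize the argument claim-by-claim, since each assertion exploits a slightly different combination of the hypotheses (F1)--(F3') and (A1)--(A3).

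\medskip

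\textbf{Positivity and boundedness of $v$.} First I would treat the velocity bound. Boundedness $v<\|f\|_{\mathcal C^1}$ should follow directly from the first Rankine--Hugoniot relation $v[s]=[f(s,c)]$: writing $v=[f]/[s]$ (after reducing to the case $s^+\neq s^-$, the equal case being handled via the $c$-equation), the mean value theorem gives $|v|\le \|f\|_{\mathcal C^1}$ since $f$ is $\mathcal C^2$ on the compact square. For strict positivity $v>0$, I would argue that a stationary or backward-moving shock cannot be connected by an admissible travelling wave. Using (F2), namely $f_s>0$ on $(0,1)$, the flux is strictly increasing in $s$, so the graph of $f(\cdot,c)$ versus $s$ forces the secant slope $[f]/[s]$ to be positive whenever $s^\pm$ lie in the interior; combined with the boundary values $f(0,c)=0$, $f(1,c)=1$ from (F1) this pins down the sign. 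The main subtlety is handling shocks where one endpoint is at $s=0$ or $s=1$, where $f_s$ vanishes.

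\medskip

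\textbf{The case $s^-=0$.} For the claim that $s^-=0$ yields no admissible shock, I would examine the first equation $s_\xi=f(s,c)-v(s+d_1)$ near the critical point $(s^-,c^-)=(0,c^-)$. Evaluating the constant $d_1$ from the boundary data and using $f(0,c)=0$ together with $f_s(0,c)=0$ from (F2), the flux term is quadratically small in $s$, so the linearization of the $s$-dynamics at this critical point is governed by $-v$. Since $v>0$ by the first claim, the critical point $(0,c^-)$ is a stable node/attractor in the $s$-direction, meaning a trajectory \emph{leaving} $s^-$ (which is required to reach $s^+$ as $\xi$ runs from $-\infty$) cannot exist with the correct orientation. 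The obstacle here is making the phase-plane orientation argument rigorous: one must confirm that admissibility requires the trajectory to flow out of $(s^-,c^-)$ as $\xi\to-\infty$, and that the sign of $s_\xi$ for small $s>0$ contradicts this.

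\medskip

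\textbf{The monotonicity of $c$ and the degenerate case.} For $c^+>c^-$ being inadmissible, I would focus on the second equation $c_\xi=v(d_1c-d_2-a(c))$. Substituting $d_1,d_2$ from \eqref{eq:d1_d2_def}, the bracket becomes $d_1c-d_2-a(c)$, which vanishes at $c=c^\pm$ by construction and is, up to the factor $v>0$, a concave-type expression in $c$ because $a$ is concave by (A3). Writing $g(c)=d_1 c - d_2 - a(c)$, concavity of $a$ makes $g$ convex, so $g$ is negative strictly between its two roots $c^-$ and $c^+$ and positive outside; tracking the resulting sign of $c_\xi$ shows the $c$-component of any connecting trajectory can only move in one direction, which forces $c^+\le c^-$. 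The final claim, that $s^+=0$ implies $c^+=c^-$, I would obtain by contradiction: if $c^+\neq c^-$ then $d_1,d_2$ are given by \eqref{eq:d1_d2_def}, and plugging $s^+=0$ into the Rankine--Hugoniot system with $f(0,c^+)=0$ from (F1) over-determines the relations, forcing $[cs+a(c)]$ and $[cf]$ to be incompatible unless the jump in $c$ collapses. I expect the concavity bookkeeping in the $c$-equation to be the most delicate part, as the precise sign of $g$ between its roots is what drives both of the last two claims.
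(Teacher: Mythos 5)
Your handling of the last three bullets is essentially the intended argument (the paper states the proposition without a detailed proof, as evident from sign analysis of \eqref{eq:dyn_sys_cap_diff}): the $c$-equation in \eqref{eq:dyn_sys_cap_diff} is scalar and autonomous, the function $g(c)=d_1c-d_2-a(c)$ is convex by (A3) and vanishes exactly at $c^\pm$, so along any connecting orbit $c$ is monotone decreasing between the roots, ruling out $c^+>c^-$; and since $f(0,c)=0$ and $f_s(0,c)=0$ force $d_1=0$ and make $(0,c^-)$ attracting in the $s$-direction when $v>0$, no orbit can leave it, ruling out $s^-=0$. Those parts are correct.

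The genuine gap is in your first bullet, and it propagates to the fourth. When $c^+\neq c^-$, neither positivity nor the bound $v<\|f\|_{\mathcal C^1}$ follows from the first Rankine--Hugoniot relation alone: $[f]=f(s^+,c^+)-f(s^-,c^-)$ mixes jumps in both variables, so the secant slope $[f]/[s]$ need not be positive a priori, and the mean value theorem only yields $|v|\le\|f_s\|_\infty+\|f_c\|_\infty\,|[c]|/|[s]|$, which blows up as $[s]\to0$ with $[c]$ fixed. You must combine the two relations of \eqref{eq:RH-1}, as the paper does in the proof of Lemma~\ref{lemma:Lax_for_small_s}: eliminating between them gives the collinearity $v=f(s^\pm,c^\pm)/(s^\pm+h)$ with $h=[a]/[c]>0$ by (A2), whence $v\ge0$, and since $f(s^+,c^+)=f(s^+,c^+)-f(0,c^+)\le\|f_s\|_\infty\, s^+<\|f\|_{\mathcal C^1}(s^++h)$, also $v<\|f\|_{\mathcal C^1}$. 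Moreover, there is a degenerate configuration your sketch never excludes: $s^+=s^-=0$ with arbitrary $c^\pm$ and $v=0$ satisfies \eqref{eq:RH-1} identically, so your ``over-determination'' of the Rankine--Hugoniot system for the fourth bullet cannot rule it out; its exclusion must come from \eqref{eq:dyn_sys_cap_diff}, where $v=0$ makes $c_\xi\equiv0$, so no trajectory joins distinct values of $c$. (For $v\neq0$ your computation does work: substituting $s^+=0$ into the second relation of \eqref{eq:RH-1} and using the first reduces it to $v\,(a(c^+)-a(c^-))=0$, and (A2) then forces $c^+=c^-$.) With the first bullet repaired via the collinearity argument and the $v=0$ degeneracy handled through the vanishing of $c_\xi$, the rest of your outline goes through.
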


\begin{lemma}\label{lemma:Lax_for_small_s}
There exists $s_* \in (0,1)$ such that when $s^- < s_*$, we have
\begin{equation}\label{eq:Lax_for_small_s}
f_s(s^-, c^-) > v
\end{equation}
for all admissible shock parameters.
\end{lemma}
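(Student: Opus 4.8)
The plan is to read admissibility directly off the travelling-wave system \eqref{eq:dyn_sys_cap_diff} by linearising at the left critical point $(s^-,c^-)$. Since the right-hand side of the $c$-equation does not depend on $s$, the Jacobian there is upper triangular,
\[
J=\begin{pmatrix} f_s(s^-,c^-)-v & f_c(s^-,c^-)\\ 0 & v\bigl(d_1-a_c(c^-)\bigr)\end{pmatrix},
\]
so one eigenvalue is exactly $\lambda_1=f_s(s^-,c^-)-v$, with eigenvector $(1,0)$, and the target inequality \eqref{eq:Lax_for_small_s} is precisely $\lambda_1>0$. I would also record two elementary facts from \refpar\ref{sec:sec2-Hopf}: the left boundary relation $v(d_1+s^-)=f(s^-,c^-)$ (so $d_1+s^->0$ and $v=f(s^-,c^-)/(d_1+s^-)$), and the strict convexity furnished by (F3'): for $0<s^-<s^f$ the chord from the origin lies strictly below the tangent, i.e. $f(s^-,c^-)<f_s(s^-,c^-)\,s^-$, and in particular $f_s(s^-,c^-)>0$.

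The argument then splits on the sign of $d_1$. If $d_1\ge 0$ — which covers every admissible $c$-shock, since for $c^+<c^-$ the quantity $d_1=(a(c^-)-a(c^+))/(c^--c^+)$ is a chord slope of the increasing function $a$ and hence positive — I simply estimate
\[
v=\frac{f(s^-,c^-)}{d_1+s^-}\le\frac{f(s^-,c^-)}{s^-}<f_s(s^-,c^-),
\]
using $d_1\ge 0$ for the first inequality and strict convexity for the second. This disposes of the generic case with no smallness needed beyond $s^-<s^f$.

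The remaining case $d_1<0$ forces $c^+=c^-$ (a pure $s$-shock), since every $c$-shock has $d_1>0$. The $c$-equation being autonomous and scalar, its solutions are monotone; as both endpoints have $c=c^-$, the profile must lie on the invariant line $\{c=c^-\}$, and the dynamics reduce to $s_\xi=\phi(s):=f(s,c^-)-v(s+d_1)$ with $\phi(s^-)=\phi(s^+)=0$. Because $s^-$ is the $\alpha$-limit of the profile it cannot be attracting, so $\phi'(s^-)=f_s(s^-,c^-)-v\ge 0$. It then remains to exclude equality: if $\phi'(s^-)=0$, then since $\phi''=f_{ss}>0$ on $(0,s^f)$ the point $s^-$ is a strict minimum of $\phi$ at height $0$, whence $\phi>0$ on $(s^-,s^f]$; moreover $v=f_s(s^-,c^-)=O(s^-)$ is then as small as we like, and since $d_1<0$ gives $-vd_1>0$ we get $\phi(s)\ge f(s,c^-)-v\ge \min_c f(s^f,c)-v>0$ on $[s^f,1]$ once $s^-$ is below a threshold. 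Thus $\phi>0$ on all of $(s^-,1]$, contradicting $\phi(s^+)=0$; choosing $s_*=\min\{s^f,\ \min_c f(s^f,c)/\max f_{ss}\}$ closes this case.

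I expect the pure $s$-shock branch — specifically ruling out the borderline $f_s(s^-,c^-)=v$ — to be the only genuine obstacle. This is exactly where the smallness hypothesis $s^-<s_*$ (and through it the convexity assumption (F3') replacing the S-shape) is indispensable: without it, a large $s^+$ could supply a degenerate viscous profile whose left-characteristic speed equals the shock speed, so the strictness in \eqref{eq:Lax_for_small_s} would fail.
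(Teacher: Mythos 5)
Your proposal is correct, but it takes a genuinely different route from the paper's proof, and on one branch a visibly cleaner one. You split on the sign of $d_1$ rather than on whether $c^+=c^-$, which is a clean dichotomy because the left boundary relation $v(s^-+d_1)=f(s^-,c^-)$ holds at any critical point of \eqref{eq:dyn_sys_cap_diff}, so a pure $s$-shock with $d_1\ge 0$ is covered by the same estimate as the $c$-shocks. For $d_1\ge 0$ (hence for every admissible $c$-shock, since Proposition~\ref{prop:inadmissible_shocks} gives $c^+<c^-$ and then $d_1$ in \eqref{eq:d1_d2_def} is a chord slope of the increasing function $a$), your one-line bound
\[
v=\frac{f(s^-,c^-)}{s^-+d_1}\le\frac{f(s^-,c^-)}{s^-}<f_s(s^-,c^-),\qquad 0<s^-<s^f,
\]
uses only the Rankine--Hugoniot relation, $s^->0$, and the strict convexity from (F3'), and yields \eqref{eq:Lax_for_small_s} for all $s^-<s^f$ with no further smallness. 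The paper instead introduces the small-velocity threshold $\delta^f$, proves that for $v\le\delta^f$ the line $l(s)=v(s+h)$ meets the graph of $f(\cdot,c^-)$ exactly once in $(0,1)$ at a point where $f_s>v$, and takes $s_*^c$ as a minimum of such intersection abscissas; your chord-slope estimate replaces this geometric uniqueness device entirely and is simpler and sharper on that branch. On the remaining branch ($d_1<0$, which as you note forces $c^+=c^-$) both arguments are Oleinik-type --- first $f_s(s^-,c^-)\ge v$ from the scalar profile, then exclusion of the tangency case via (F3') --- but the implementations differ: the paper rules out tangency by the tangent line through $(1,\min_c f(s^f,c))$ and its tangency point $s^s(c)$, while you do it quantitatively via $v=f_s(s^-,c^-)\le s^-\max f_{ss}$, which has the merit of making the threshold explicit, $s_*=\min\{s^f,\ \min_c f(s^f,c)/\max f_{ss}\}$.

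One small omission in the tangency case: you conclude $\phi>0$ on $(s^-,1]$ and declare this contradicts $\phi(s^+)=0$, which tacitly assumes $s^+>s^-$; a priori the profile could run downward to some $s^+<s^-$. Your own tools close this instantly: strict convexity of $\phi$ on $(0,s^f)$ with a minimum of height $0$ at $s^-$ gives $\phi>0$ on $(0,s^-)$ as well, and $\phi(0)=-vd_1>0$ because $d_1<0$; hence $\phi$ has no zero in $[0,1]\setminus\{s^-\}$ at all, and no value of $s^+$ survives. (Equivalently: $\phi>0$ immediately to the left of $s^-$ already forbids a connecting orbit with $s^+<s^-$, since that would require $\phi<0$ there.) With that one sentence added, the proof is complete.
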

\begin{proof}
If $c^- \neq c^+$, we rewrite \eqref{eq:RH-1} in the following form:
\begin{align*}
    v = \dfrac{[f(s,c)]}{[s]} = \dfrac{f(s^\pm,c^\pm)}{s^\pm + h}, \qquad h = \dfrac{[a(c)]}{[c]},
\end{align*}
therefore, points $(-h, 0)$, $(s^+, f(s^+,c^+))$, $(s^-, f(s^-,c^-))$ are collinear and lie on the line $l(s)=v(s+h)$. Due to (F3') there exists such 
\[
\delta^f = \dfrac{\min \{f(s^f, c): c\in[0,1]\}}{1+\|a\|_{\mathcal{C}^1}}>0,
\]
that when $0 \leqslant v \leqslant \delta^f$, there is a unique intersection point of $l(s)$ and $f(s, c^-)$ inside the interval $(0,1)$. Indeed, by the definition of $\delta^f$ we have $\delta^f (s+h) < f(s, c^-)$ for all $s \geqslant s^f$, therefore there are no intersections on $[s^f, 1]$. And on $(0, s^f)$ there is exactly one intersection due to convexity given by (F3'). We denote this intersection point $(s^c(c^-, v), f(s^c(c^-, v), c^-))$, and at this intersection point we have 
\[
f_s(s^c(c^-, v), c^-) > v.
\]
Therefore, \eqref{eq:Lax_for_small_s} holds for admissible shocks with $c^- \neq c^+$ for any $s^- < s_*^c$, where
\[
s_*^c = \min\limits_{c\in[0,1]} s^c(c, \delta^f).
\]

If $c^-=c^+=c$, we look at the system \eqref{eq:dyn_sys_cap_diff} and note that it simplifies into one equation. A more detailed analysis of this case is given in \cite[Lemma 3.3]{MR2024} and \cite[Sect.~5.2]{MR2024}. We simply note that 
due to Oleinik admissibility we have 
$f_s(s^-, c^-) \geqslant v$. 
To achieve equality the graph of $f(\cdot, c)$ must be tangent to the chord connecting $(s^-, f(s^-, c))$ and $(s^+, f(s^+, c))$. Therefore, denoting by $s^s(c)$ the 
tangent point of the graph of $f(s, c)$ for $s\in(0, s^f)$ and the line going through the point $(1, \min \{f(s^f, c): c\in[0,1]\})$,
and by
\[
s_*^s = \min\limits_{c\in[0,1]} s^s(c),
\]
we conclude that for admissible shocks with $c^-=c^+$ for any $s^- < s_*^s$ we always have a strict sign, as no tangent chord could be constructed there. Thus, \eqref{eq:Lax_for_small_s} holds due to $f$ being convex at $s^-$.
By construction $s^s(c)$ is a positive continuous function on $[0,1]$, 
so it admits a minimum separated from zero.

Finally, denoting $s_* = \min\{ s_*^s, s_*^c \} > 0$, we complete the proof.
\end{proof}

\subsection{Inadmissible nullcline configurations}
\label{sec:sec2-nullclines}
When we get rid of the S-shaped assumption and either the monotonicity or at least the limited non-monotonicity with respect to $c$, it becomes untenable to write down the full classification of possible nullcline configurations, as in \cite[Sect.~4.2]{Bahetal} and \cite[Sect.~3.3]{MR2024}. However, we can still formulate a sufficient restriction on admissible configurations that we use later when transferring admissibility to Lagrange coordinates.

\begin{lemma}
\label{lemma2}
Given fixed $c^+$ and $c^-$, such that $c^+ < c^-$, consider two shocks --- from $(s^-, c^-)$ to $(s^+, c^+)$ with speed $v$ and from $(z^-, c^-)$ to $(z^+, c^+)$ with speed $w$. Assume
\begin{equation}\label{eq:lemma2_assumption}
s^- > z^-, \quad s^+ < z^+, \quad v < w.
\end{equation}
Then both shocks cannot be admissible at the same time.
\end{lemma}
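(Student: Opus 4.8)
The plan is to exploit the fact that both shocks connect the \emph{same} pair of $c$-values $c^-$ and $c^+$, so they are governed by the same constants $d_1$ and $d_2$ in the travelling wave system \eqref{eq:dyn_sys_cap_diff}; only the speeds $v$ and $w$ differ. First I would record that, since $d_1,d_2$ depend on $c^\pm$ alone through \eqref{eq:d1_d2_def}, the $c$-equation $c_\xi = v\,g(c)$ with $g(c)=d_1 c - d_2 - a(c)$ is decoupled from $s$. A direct computation gives $g(c^+)=g(c^-)=0$, while (A3) makes $g$ strictly convex (as $g_{cc}=-a_{cc}>0$), so $g(c)<0$ on the open interval $(c^+,c^-)$. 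Hence along either travelling wave $c$ is strictly monotone between its limits $c^-$ (at $\xi=-\infty$) and $c^+$ (at $\xi=+\infty$), and I may regard $s$ as a single-valued function of $c$: write $s_1(c)$ for the first orbit and $s_2(c)$ for the second, both defined on $(c^+,c^-)$ with boundary limits $s^-,s^+$ and $z^-,z^+$ respectively.

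Next I would set $\phi(c)=s_1(c)-s_2(c)$ and invoke the assumptions \eqref{eq:lemma2_assumption}: near $c^-$ we have $\phi\to s^- - z^->0$, while near $c^+$ we have $\phi\to s^+ - z^+<0$, so by continuity $\phi$ vanishes somewhere in $(c^+,c^-)$. At any crossing point $(s^*,c^*)$ both orbits pass through the same point, so I can compare their slopes $ds/dc=s_\xi/c_\xi$ read off from \eqref{eq:dyn_sys_cap_diff}. Writing $F=f(s^*,c^*)$ and $g^*=g(c^*)$, the common $(s^*+d_1)$ terms cancel and I obtain the key identity
\[
\phi'(c^*)=\frac{1}{g^*}\left(\frac{F}{v}-\frac{F}{w}\right)=\frac{F}{g^*}\cdot\frac{w-v}{vw}.
\]
Here $g^*<0$, $w-v>0$ and $vw>0$, so the sign of $\phi'(c^*)$ is opposite to that of $F$.

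Since $F=f(s^*,c^*)>0$ whenever $0<s^*\leqslant 1$ (by (F1)--(F2) the map $f(\cdot,c)$ increases from $0$ to $1$), this forces $\phi'(c^*)<0$ at \emph{every} crossing. But if $c_0$ denotes the smallest $c\in(c^+,c^-)$ with $\phi(c_0)=0$, then $\phi<0$ on $(c^+,c_0)$, which would require $\phi'(c_0)\geqslant 0$ --- contradicting $\phi'(c_0)<0$. Hence no crossing can exist, the prescribed boundary values $s^->z^-$, $s^+<z^+$ are incompatible, and the two shocks cannot both admit connecting orbits.

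I expect the main obstacle to be justifying $s^*>0$ at the crossing, i.e.\ that $F>0$; without it the sign argument collapses. I would handle this by noting that on the line $s=0$ one has $s_\xi=f(0,c)-v d_1=-v d_1$, which is strictly negative because $d_1=(a(c^-)-a(c^+))/(c^- - c^+)>0$ by (A2) together with $c^->c^+$. Thus no orbit valued in the physical strip $s\in[0,1]$ can touch $s=0$ (its $s$-minimum cannot be attained there), so both orbits stay in $s>0$ and $F>0$ holds at each crossing. A secondary technical point is confirming that the connecting orbit genuinely traverses $c$ monotonically and that crossings are transversal, but both follow from the decoupled, strictly signed $c$-dynamics and from $\phi'\neq 0$.
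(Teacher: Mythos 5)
Your proof is correct and takes essentially the same route as the paper: both arguments reduce the two connecting orbits to graphs $s(c)$, $z(c)$ over the common monotone $c$-dynamics (same $d_1,d_2$ since $c^\pm$ are shared), use the boundary ordering \eqref{eq:lemma2_assumption} to force a crossing, and compare the slopes there via the sign of $d_1c - d_2 - a(c) < 0$ (concavity of $a$) to contradict $v < w$. Your explicit check that $f>0$ at the crossing --- orbits with positive endpoint states cannot touch $s=0$ because $s_\xi = -vd_1 < 0$ there --- fills in a step the paper leaves implicit, since its conclusion $v\geqslant w$ tacitly uses $f(\bar s,\bar c)>0$.
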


\begin{proof}
From (W4) due to Sect.~\ref{sec:sec2-Hopf} admissible shocks must solve the travelling wave dynamic system \eqref{eq:dyn_sys_cap_diff}. Therefore, if both shocks are admissible, there must exist two trajectories. One trajectory is the function $s(c)$ that solves 
\[
\dfrac{ds}{dc} = \dfrac{f(s, c) - v(s+d_1)}{v(d_1 c - d_2 - a(c))}
\]
and connects $(s^+, c^+)$ to $(s^-, c^-)$, the other is the function $z(c)$ that solves
\[
\dfrac{dz}{dc} = \dfrac{f(z, c) - w(z+d_1)}{w(d_1 c - d_2 - a(c))}
\]
and connects $(z^+, c^+)$ to $(z^-, c^-)$. Note that the values of $d_1$ and $d_2$ are defined in \eqref{eq:d1_d2_def} and depend only on $c^\pm$. Due to the assumption \eqref{eq:lemma2_assumption} the graphs of these function $s(c)$ and $z(c)$ must intersect, and the $s(c)$ graph must cross the $z(c)$ graph from below at one of the intersections. Denote such intersection $(\bar{s}, \bar{c})$. At that point we necessarily have
\[
\dfrac{f(\bar{s}, \bar{c}) - v(\bar{s}+d_1)}{v(d_1 \bar{c} - d_2 - a(\bar{c}))} \geqslant \dfrac{f(\bar{s}, \bar{c}) - w(\bar{s}+d_1)}{w(d_1 \bar{c} - d_2 - a(\bar{c}))}.
\]
Note that $d_1 \bar{c} - d_2 - a(\bar{c}) < 0$ due to (A3), and we arrive at $v \geqslant w$, which contradicts~\eqref{eq:lemma2_assumption}. Thus, at least one of the shocks is inadmissible.
\end{proof}

\section{Lagrange coordinate transformation}
\label{sec:Lagrange}

\subsection{Zero flow area}
\label{sec:zero-flow}
We would like to utilize the Lagrange coordinates, i.e. the coordinates tied to the flow, but to use them rigorously, we first need to establish the area, where there is no flow. Due to (F1) and (F2), the flow function is zero only when $s=0$. In this section we prove that under the conditions (S1)--(S3) it is a connected area in $Q = \mathbb{R}_+^2$ in $(x,t)$ space bounded by the ray $(x^0, +\infty)$ on one side and by a curve made of discontinuities of the function $s$ on the other.
\begin{lemma}[Lemma 4.1, \cite{MR2024}]
\label{lemma_zero_below}
Let $(x_*, t_*)$, $x_*>0$, $t_*>0$ be a zero of the solution $s$, i.e. $s(x_*, t_*) = 0$ in the smoothness area or one of $s(x_*\pm0, t_*) = 0$ on the shock. Then $s(x_*, t) = 0$ for all $0 \leqslant t < t_*$.
\end{lemma}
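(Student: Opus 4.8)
The plan is to prove that the zero set of $s$ propagates downward in time, i.e. if $s$ vanishes at some point $(x_*,t_*)$, then $s$ vanishes on the whole vertical segment below it. The natural strategy is a contradiction argument: suppose $s(x_*,t_*)=0$ but $s(x_*,t_1)>0$ for some $0\le t_1<t_*$. I would track what happens to the saturation along the vertical line $x=x_*$ as $t$ decreases from $t_*$, and derive a contradiction with the structure of admissible shocks established in Proposition~\ref{prop:inadmissible_shocks}. The key facts I intend to use are that the admissible shock speed satisfies $0<v<\|f\|_{\mathcal C^1}$ and, crucially, that shocks with $s^-=0$ cannot be admissible: this rules out the zero region being bounded below by a shock across which $s$ jumps up from a positive value to zero as one moves in the direction of the flow.

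First I would fix attention on the first equation $s_t + f(s,c)_x = 0$ of the system \eqref{eq:main_system_chem_flood}, which is a scalar conservation law for $s$ once $c$ is treated as a given coefficient. In the smoothness region, using (F1)--(F2) (so $f=0$ exactly on $\{s=0\}$ and $f_s(0,c)=0$), a point where $s=0$ is a minimum of a nonnegative quantity, and I would argue via the characteristic/transport structure that $s$ cannot become positive as $t$ decreases without crossing a discontinuity. Concretely, I expect the proof to proceed by following the boundary curve $\Gamma$ of the zero-flow region downward and showing it is a well-defined curve of discontinuities of $s$ (by (W1) there are only locally finitely many such curves). Along this boundary, on the side where $s>0$ one has $s^->0$ (with $s^+=0$ on the zero side), and I would invoke the last bullet of Proposition~\ref{prop:inadmissible_shocks} (if $s^+=0$ then $c^+=c^-$) to conclude the jump in $c$ is trivial, so the shock is a pure $s$-shock governed by the scalar law.

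The main technical step is then to analyze the velocity of this boundary shock and show it is consistent only with the zero region lying below: a shock separating $s^->0$ from $s^+=0$ has speed $v=f(s^-,c)/(s^-+h)$ from the Rankine--Hugoniot relation, and since $f(s^-,c)>0$ and the denominator is positive (as $h=[a(c)]/[c]\ge 0$ degenerates appropriately when $c^+=c^-$, giving simply $v=f(s^-,c)/s^->0$), the speed is strictly positive. A strictly positive speed means the discontinuity curve moves to the right as $t$ increases, equivalently the zero region occupies the region to the right and below; retracing this, any vertical segment beneath a zero point must itself lie in the zero region. I would formalize this by assuming the contrary and producing, at the lowest level where $s$ first becomes positive along $x=x_*$, either an interior positive minimum (contradicting the transport of the scalar equation with $f_s(0,c)=0$) or a shock with $s^-=0$ feeding positive saturation, which is forbidden.

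The hard part will be handling the interplay between the smoothness regions and the discontinuity curves rigorously — in particular ensuring that the boundary of $\{s=0\}$ is genuinely a single Lipschitz/$\mathcal C^1$ curve of $s$-discontinuities rather than an irregular set, and ruling out the degenerate possibility that $s$ touches zero tangentially from above inside a smoothness region without any shock. Since $f_s(0,c)=0$ by (F2), the characteristic speed degenerates precisely at $s=0$, so the usual strict transport argument needs care; I expect to lean on (F3'), which gives $f_{ss}>0$ for small $s$, to control the behavior near the zero set and to guarantee that characteristics emanating from a positive state cannot reach $s=0$ except through an admissible shock. Because this lemma is quoted verbatim from \cite{MR2024} (Lemma~4.1) and the restrictions here only weaken (W1) and relax the flow assumptions in a way that still guarantees the needed sign and convexity properties, I would expect the original argument to transfer essentially unchanged, with the new assumption (F3') substituting for the discarded S-shaped condition wherever local convexity near $s=0$ was previously invoked.
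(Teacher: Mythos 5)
Your overall skeleton --- work along the fixed vertical line $x=x_*$, locate the transition from $s>0$ (below) to $s=0$ (above), and split into ``continuous touchdown inside a smoothness area'' versus ``shock crossing'', killing the latter via Proposition~\ref{prop:inadmissible_shocks} (since the admissible speed satisfies $v>0$, the crossing is oriented so that the state seen for $t<\tau$ is $s^+$ and for $t>\tau$ is $s^-$, so the forbidden transition forces $s^-=0$) --- is exactly the mechanism of the cited proof. But the crux, ruling out continuous touchdown, is left as a gesture, and the tool you reach for is the wrong one. What makes the smooth case work is not (F3') but the full degeneracy of $f$ on $\{s=0\}$: from $f(0,c)\equiv 0$ and $f\in\mathcal C^2$ you get not only $f_s(0,c)=0$ but also $f_c(0,c)=0$, hence $|f_s(s,c)|+|f_c(s,c)|\le Cs$ locally. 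Writing the first equation as $s_t=-f_s\,s_x-f_c\,c_x$ and using the local boundedness of $s_x$ (piecewise $\mathcal C^1$) and of $c_x$ (condition (W2)), this yields $|s_t|\le C' s$ along the vertical segment, and Gr\"onwall backward in time forces $s\equiv 0$ on the whole continuity interval, so $s$ cannot vanish at the top of an interval on which it is positive. Without noting $f_c(0,c)=0$ this step genuinely fails: the coupling term $f_c\,c_x$ could otherwise drive $s$ to zero in finite time, and no convexity hypothesis repairs that. Indeed (F3') plays no role here: the paper states explicitly that (F3') is used exactly once, in Lemma~\ref{lemma:Lax_for_small_s}, and Lemma~\ref{lemma_zero_below} is quoted from \cite{MR2024} precisely because its proof transfers with no changes, so your closing expectation that (F3') must substitute for S-shapedness in this lemma is misplaced.

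A second, structural problem: your plan to ``follow the boundary curve of the zero-flow region downward and show it is a well-defined curve of discontinuities'' is circular at this stage, since the regularity of that boundary is the content of Lemma~\ref{lemma:t0_is_shocks} and Corollary~\ref{corollary:zero_flow_area}, both of which are proved downstream of Lemma~\ref{lemma_zero_below}. The correct argument never needs the global boundary: by (W4) every discontinuity curve is admissible, hence by Proposition~\ref{prop:inadmissible_shocks} a strictly increasing graph $x=\gamma(t)$ with $0<\gamma'(t)<\|f\|_{\mathcal C^1}$, so it meets the line $x=x_*$ at most once and transversally; together with the local finiteness in (W1), the function $s(x_*,\cdot)$ has only finitely many jump times on $[t_1,t_*]$, and the two-case analysis above closes the contradiction. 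With these two repairs --- the Gr\"onwall step via $f_c(0,c)=0$ in place of your vague ``minimum/transport'' appeal, and the vertical-line bookkeeping in place of the zero-region boundary --- your proposal coincides with the argument of \cite[Lemma 4.1]{MR2024}.
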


\begin{lemma}[Lemma 4.4, \cite{MR2024}]
\label{lemma:t0_is_shocks}
For all $x > x^0$ we define
\[
t_0(x) = \sup\{ t: s(x, t) = 0 \}.
\]
Then 
\begin{itemize}
    \item $t_0(x) < +\infty$;
    \item $(x, t_0(x))$ is a point on a shock;
    \item $t_0(x)$ is continuous, piece-wise $\mathcal{C}^1$-smooth.
\end{itemize}
\end{lemma}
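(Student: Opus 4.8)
The plan is to extract from Lemma~\ref{lemma_zero_below} that, for each fixed $x$, the zero set of $s(x,\cdot)$ is a time-interval anchored at $0$, and then prove the three assertions separately. Fix $x>x^0$ and set $Z_x=\{t\ge 0:\ s(x,t)=0\}$, counting a shock point in $Z_x$ when one of its one-sided $s$-limits vanishes (as in Lemma~\ref{lemma_zero_below}). By (S1), $s(x,0)=s^x_0(x)=0$, so $0\in Z_x$; Lemma~\ref{lemma_zero_below} says exactly that $Z_x$ is downward closed, so $Z_x$ is an interval with left endpoint $0$, $t_0(x)=\sup Z_x$, and $s(x,t)>0$ for every $t>t_0(x)$.

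For finiteness I would argue by contradiction using the integral form of the first equation of \eqref{eq:main_system_chem_flood}, which holds for W-solutions since the Rankine--Hugoniot jumps cancel. If $t_0(x)=+\infty$ then $s(x,t)\equiv 0$, and integrating $s_t+f(s,c)_x=0$ over $[0,x]\times[0,T]$ gives
\[
\int_0^x s(\xi,T)\,d\xi=\int_0^x s^x_0(\xi)\,d\xi+\int_0^T\!\big(f(s(0,t),c(0,t))-f(s(x,t),c(x,t))\big)\,dt.
\]
On the right edge $f(s(x,t),\cdot)=f(0,\cdot)=0$ by (F1), while (S3) together with $f_s>0$ and $f(0,c)=0$ yields $f(s(0,t),c(0,t))\ge f(\delta^0,c(0,t))\ge\eta:=\min_{c\in[0,1]}f(\delta^0,c)>0$. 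Hence the left-hand side is at least $\eta T$, yet it is at most $x$ because $0\le s\le 1$; letting $T\to\infty$ is absurd, so $t_0(x)<+\infty$.

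The second assertion is the crux. Let $s^+=\lim_{t\downarrow t_0(x)}s(x,t)$. If $s^+>0$, then $s$ has a genuine jump along the vertical line through $(x,t_0(x))$, so this point is a discontinuity of $s$ and, by (W1), lies on a shock curve. The delicate case---and the main obstacle---is $s^+=0$, a continuous transition, which I would rule out using the convexity assumption (F3'). Near such a point $s$ is small and therefore lies in the convex range $(0,s^f)$; by (S3) the positive phase sits on the $x=0$ side of the front with $s=0$ ahead, so $s$ decreases in the direction the front advances. Since $f(\cdot,c)$ is convex there, the characteristic speed $f_s(\cdot,c)$ is increasing and vanishes at $s=0$, so characteristics carrying the positive values behind the front are strictly faster than the $s=0$ state ahead and must overtake it; a $\mathcal C^1$ solution cannot have crossing characteristics, so no continuous compressive transition can exist. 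Thus $s^+=0$ is impossible and $(x,t_0(x))$ lies on a shock, in fact one with $s^+=0$, $c^+=c^-$ as in Proposition~\ref{prop:inadmissible_shocks}.

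Finally, regularity of $t_0$ follows from the first two assertions together with the velocity bound $0<v<\|f\|_{\mathcal C^1}$ of Proposition~\ref{prop:inadmissible_shocks}. The boundary of the zero region is a union of $\mathcal C^1$ shock arcs, locally finite in number by (W1); writing such an arc as $x=\gamma(t)$ its speed $\gamma'=v$ is strictly positive and bounded, so $\gamma$ is strictly increasing with bounded slope and the same arc is the graph of $t=t_0(x)$ with $t_0'=1/v$, hence $\mathcal C^1$ on each arc. Strict positivity of $v$ makes $\gamma$ a continuous strictly increasing curve, so its inverse $t_0$ is single-valued and continuous across the finitely many arc junctions, while finiteness (second paragraph) guarantees $t_0(x)$ is defined for all $x>x^0$. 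Therefore $t_0$ is continuous and piecewise $\mathcal C^1$.
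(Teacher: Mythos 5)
First, note that this paper does not actually prove the statement: it is imported verbatim as \cite[Lemma~4.4]{MR2024}, so your attempt can only be judged on its own merits. Your first two paragraphs are sound: the interval structure of the zero set follows from Lemma~\ref{lemma_zero_below} and (S1) exactly as you say, and the finiteness argument via the exact form $f(s,c)\,dt-s\,dx$ (Proposition~\ref{prop:contour_integral}), with the inflow flux bounded below by $\min_{c}f(\delta^0,c)>0$ thanks to (S3), (F1), (F2), is correct and in the spirit of the machinery the paper sets up. The genuine gap is in your treatment of the case $s^+=0$, which is the actual content of the second assertion. Your argument --- ``values behind the front move with speed $f_s>0$, the $s=0$ state ahead has speed $0$, so characteristics must cross, which a $\mathcal{C}^1$ solution forbids'' --- does not withstand scrutiny here. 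For this $2\times2$ system, $s$ is \emph{not} constant along $s$-characteristics (one has $s_t+f_s s_x=-f_c\,c_x$), so ``characteristics carrying the positive values'' is not a licensed notion near the front, where $c$ may vary; and (W1) allows derivative discontinuities, so the solution need not be $\mathcal{C}^1$ there. More fundamentally, the overtaking heuristic does not rule out a continuous transition even in the scalar convex model case: for $s_t+(s^2)_x=0$ with data decreasing continuously to $0$ at a point, the solution keeps a \emph{continuous, stationary} front for a positive time before a shock forms, so ``faster values behind'' produce a contradiction only after finite time, not at the instant considered. What actually excludes $s^+=0$ in this setting is quantitative: the data bounds (S2)--(S3) together with the admissible-wave structure near vacuum (the leading wave to $s=0$ must be a shock of strength bounded below, the content behind Lemma~\ref{lemma:Lax_for_small_s} and, downstream, the ``locally separated from $0$'' claim of Corollary~\ref{corollary:zero_flow_area}) prevent arbitrarily small positive saturations from ever abutting the zero region. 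You gesture at (F3') but never establish such a lower bound, so the central claim is asserted rather than proved.

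There is a secondary gap in the regularity part. From the pointwise statement ``each $(x,t_0(x))$ lies on a shock'' you pass to ``the boundary of the zero region is a chain of $\mathcal{C}^1$ shock arcs whose inverses glue into a continuous $t_0$.'' Inverting a single arc $x=\gamma(t)$ with $\gamma'=v>0$ is fine, but continuity of $t_0$ across junctions requires knowing that the boundary contains no vertical segments, i.e.\ that $t_0$ has no jumps; a jump at $\bar x$ would create boundary points $(\bar x,t)$ with $t$ strictly between the one-sided limits, and these are limits of zeros of $s$ to which your second assertion does not apply, so positivity of shock speeds alone does not exclude them. Ruling this out (in effect, monotonicity or semicontinuity of $t_0$) needs an additional argument --- for instance a mass/flux balance of the same kind as your finiteness proof --- which you do not supply. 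Also, a small notational slip: the limit you call $s^+$ (from above in $t$) is, under the paper's convention $s^\pm=s(x_1,t_1\mp0)$ for positively moving shocks, the state $s^-$ \emph{behind} the shock; your concluding sentence silently switches to the paper's convention when invoking Proposition~\ref{prop:inadmissible_shocks}.
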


\begin{corollary}[Corollary 4.5, \cite{MR2024}]
\label{corollary:zero_flow_area}
Define $\Omega_0 = \{(x,t): x > x^0, 0\leqslant t < t_0(x)\}$. Then $s(x,t) = 0$ in $\Omega_0$ and $s(x,t) > 0$ outside $\overline{\Omega}_0$. Moreover, $s(x,t)$ is locally separated from $0$ outside $\overline{\Omega}_0$.
\end{corollary}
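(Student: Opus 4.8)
The plan is to derive the corollary directly from the two preceding lemmas together with the boundary restrictions (S1)--(S3), treating the three assertions separately and in increasing order of difficulty.

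First I would establish that $s \equiv 0$ on $\Omega_0$. Fix $(x,t)$ with $x > x^0$ and $0 \le t < t_0(x)$. If $t = 0$, then $s(x,0) = s_0^x(x) = 0$ by (S1), since $x > x^0$. If $t > 0$, then because $t < t_0(x) = \sup\{\tau : s(x,\tau) = 0\}$, the value $t$ is not an upper bound of the zero set, so there exists $\tau \in (t, t_0(x)]$ with $s(x,\tau) = 0$ (a zero either in the smoothness region or one-sided on a shock). Applying Lemma~\ref{lemma_zero_below} at the point $(x,\tau)$ then gives $s(x,t) = 0$. Hence $s$ vanishes throughout $\Omega_0$.

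Next I would show $s > 0$ on $Q \setminus \overline\Omega_0$, using $s \ge 0$ and splitting by position relative to $x^0$. Since $\Omega_0 \subseteq \{x > x^0\}$, every point with $x < x^0$ lies outside $\overline\Omega_0$; for such a point a zero $s(x_*,t_*) = 0$ would, by Lemma~\ref{lemma_zero_below} (or directly if $t_* = 0$), force $s(x_*, 0) = s_0^x(x_*) = 0$, contradicting (S2), while on the axis $x_* = 0$ it would contradict (S3). Thus $s > 0$ for $0 \le x < x^0$. For $x > x^0$, the continuity of $t_0$ from Lemma~\ref{lemma:t0_is_shocks} shows that the slice of $\overline\Omega_0$ over $x$ is exactly $[0, t_0(x)]$, so a point outside $\overline\Omega_0$ has $t > t_0(x)$ and is therefore not a zero, by the very definition of $t_0(x)$ as the supremum of the zero set; hence $s(x,t) > 0$. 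The boundary line $x = x^0$ is a limiting case handled by the continuity of $s$ together with the same supremum argument.

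Finally, the most delicate assertion is that $s$ is locally separated from $0$ outside $\overline\Omega_0$, and this is where I expect the main obstacle, precisely because $s$ jumps across shocks, so pointwise positivity does not immediately upgrade to a uniform lower bound. The plan is to combine the positivity just established with the regularity in (W1): in the smoothness regions continuity yields a positive lower bound in a neighborhood of each point, while at a shock point $(x_*,t_*) \notin \overline\Omega_0$ both one-sided traces $s(x_* \pm 0, t_*)$ are positive (otherwise the point would be a zero and hence lie in $\overline\Omega_0$ by the containment of the zero set established above), and these traces vary continuously along the finitely many $\mathcal C^1$ shock curves meeting any given compact set. Covering an arbitrary compact $K \subset Q \setminus \overline\Omega_0$ by such neighborhoods and invoking compactness then gives $\inf_K s > 0$, which is the desired local separation.
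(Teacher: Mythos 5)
Your first assertion ($s\equiv 0$ in $\Omega_0$) and the bulk of the second and third assertions are argued correctly, and along the route that Lemmas~\ref{lemma_zero_below} and \ref{lemma:t0_is_shocks} are set up for (note that the present paper only quotes this statement from \cite{MR2024} without reproducing the proof): for $t>0$ a point of $\Omega_0$ lies strictly below some zero by the supremum property, so Lemma~\ref{lemma_zero_below} applies; for $0\le x<x^0$ a zero would propagate down to the initial line and contradict (S2) (or (S3) on the axis); for $x>x^0$, $t>t_0(x)$ the definition of $t_0$ excludes zeros, including one-sided zeros on shocks, which is also what makes your compactness argument for local separation work, since both traces along the finitely many $\mathcal C^1$ shock curves in a compact $K$ are then positive and continuous. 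The genuine gap is the sentence dismissing the line $\{x=x^0\}$ as ``a limiting case handled by the continuity of $s$ together with the same supremum argument.'' Neither tool applies there: $t_0$ is defined only for $x>x^0$, so the supremum argument says nothing at $x=x^0$; and the backward-propagation contradiction evaporates exactly at this abscissa, because (S1) gives $s_0^x(x^0)=0$, so a hypothetical zero $(x^0,t_*)$, $t_*>0$, propagated down by Lemma~\ref{lemma_zero_below} terminates harmlessly at the consistent value $s(x^0,0)=0$. Since typically $t_0(x)\to 0$ as $x\to x^0+$ (cf.\ Fig.~\ref{fig:orig-lagr-areas}), the points $(x^0,t)$ with $t>0$ do lie outside $\overline{\Omega}_0$, so the claim there is not vacuous, and any compact $K$ touching this line inherits the gap in the local-separation statement as well.

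To close it, pointwise reasoning does not suffice and one needs the structural input developed alongside these lemmas. A jump across the column $\{x^0\}\times(0,t_*)$ is excluded cheaply: such a discontinuity curve has velocity $\gamma_t=0$, which is inadmissible by Proposition~\ref{prop:inadmissible_shocks} (admissible shocks have $v>0$), and even the bare Rankine--Hugoniot condition $0\cdot[s]=[f]$ together with (F1), (F2) forces both traces to vanish. But that leaves the scenario in which $s$ tends to $0$ continuously from both sides along the column while staying pointwise positive off it --- pointwise positivity for $x<x^0$ is perfectly compatible with a non-uniform decay to zero, which is precisely what the corollary's ``locally separated from $0$'' clause must rule out. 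Excluding this requires a global ingredient such as the exactness of the form $f(s,c)\,dt-s\,dx$ (Proposition~\ref{prop:contour_integral}): if $s$ vanished on the column, then $f=0$ there by (F1), so no fluid crosses $x=x^0$, while (S3) and (F2) give a strictly positive influx at $x=0$, and the balance over $[0,x^0]\times[0,T]$ bounds the accumulated mass by $x^0$ --- the kind of flux bookkeeping that \cite{MR2024} uses to prove Lemma~\ref{lemma:t0_is_shocks} itself and that your proposal never invokes. As written, the proposal proves the corollary on $\{x\ne x^0\}$ but not on the line $x=x^0$, and consequently not the full local-separation claim.
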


\begin{proposition}[Proposition 4.6, \cite{MR2024}]
\label{prop:c_on_zero_boundary}
$c_t = 0$ in $\Omega_0$, therefore $c(x, t_0(x)) = c^x_0(x)$.
\end{proposition}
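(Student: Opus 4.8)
The plan is to exploit the fact that on $\Omega_0$ the saturation vanishes identically (Corollary~\ref{corollary:zero_flow_area}), which annihilates both the $s$-dynamics and the convective flux in the second equation and leaves only the adsorption term to evolve. First I would record the consequences of $s\equiv 0$ on $\Omega_0$: by (F1) this forces $f(s,c)=f(0,c)=0$ throughout $\Omega_0$, so that $cs\equiv 0$ and $cf(s,c)\equiv 0$ there as well. Before substituting into the system I would check that $c$ carries no jump discontinuities in the interior of $\Omega_0$: any such discontinuity would be a shock with $s^-=s^+=0$, which is ruled out by Proposition~\ref{prop:inadmissible_shocks} (``shocks with $s^-=0$ cannot be admissible''). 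Hence on $\Omega_0$ the pair $(s,c)$ is a classical solution of \eqref{eq:main_system_chem_flood} away from a locally finite set of curves carrying at most derivative jumps, and (W3) applies on each smooth piece.

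Next I would substitute into the second conservation law. Since $cs\equiv 0$ and $cf(s,c)\equiv 0$ on $\Omega_0$, the equation $(cs+a(c))_t+(cf(s,c))_x=0$ collapses on each smooth piece to $\partial_t\,a(c)=0$, equivalently $a_c(c)\,c_t=0$. The conclusion then follows by monotonicity rather than by dividing through by $a_c(c)$. From $\partial_t a(c)=0$ on each smooth piece, together with the continuity of $a(c)$ across the (measure-zero) derivative-jump curves, the map $t\mapsto a(c(x,t))$ is constant on every vertical segment $\{x\}\times[0,t_0(x))$. By (A1)--(A2) the function $a$ is strictly increasing on $[0,1]$ — for $0\le c_1<c_2\le 1$ one has $a(c_2)-a(c_1)=\int_{c_1}^{c_2}a_c\,dc>0$ — hence injective, so $t\mapsto c(x,t)$ is itself constant; that is, $c_t=0$ on $\Omega_0$. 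Since $\Omega_0$ reaches down to $t=0$ and $c$ is continuous up to $\{t=t_0(x)\}$ by (W1), this yields $c(x,t)=c(x,0)=c^x_0(x)$ throughout $\overline{\Omega}_0$, and in particular $c(x,t_0(x))=c^x_0(x)$.

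The only genuinely delicate point is the last one: because (A2) guarantees $a_c>0$ only on the open interval, it permits degeneracy at $c=1$, so the naive implication ``$a_c(c)\,c_t=0\Rightarrow c_t=0$'' is not valid where $c$ touches a boundary value. Routing the argument through the injectivity of the strictly increasing map $a$, rather than through the (possibly vanishing) factor $a_c$, is what makes the step airtight. The secondary step one must not omit is the verification that no $c$-shock can live inside $\Omega_0$, since this is precisely what licenses the pointwise use of the classical equation \eqref{eq:main_system_chem_flood} and hence the identity $\partial_t a(c)=0$.
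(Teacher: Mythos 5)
Your reduction is the right one and matches the argument recalled from \cite{MR2024}: on $\Omega_0$ Corollary~\ref{corollary:zero_flow_area} gives $s\equiv 0$, hence by (F1) $f(0,c)=0$, so $cs$ and $cf(s,c)$ vanish identically and the second equation of \eqref{eq:main_system_chem_flood} collapses to $\partial_t a(c)=0$ on smooth pieces; interior jumps of $c$ are excluded because any shock inside $\Omega_0$ would have $s^-=0$, inadmissible by Proposition~\ref{prop:inadmissible_shocks}; and your detour through the injectivity of the strictly increasing $a$ (rather than dividing by $a_c$, which (A2)--(A3) indeed allow to vanish at $c=1$, though concavity forces $a_c>0$ at $c=0$) is correct and, if anything, more careful than strictly needed.

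The one flawed step is the final appeal to (W1). Continuity of $c$ ``up to $\{t=t_0(x)\}$'' does \emph{not} follow from (W1), because by Lemma~\ref{lemma:t0_is_shocks} every point $(x,t_0(x))$ lies \emph{on} a shock curve --- precisely where (W1) permits jumps. What saves the claim is the fourth bullet of Proposition~\ref{prop:inadmissible_shocks}, which you quote earlier but do not deploy here: since the boundary shock has positive speed, $t_0$ is increasing and the zero region lies on the $x+0$ side, so $s^+=0$ and therefore $c^+=c^-$. Hence $c$ is in fact continuous across $t_0(x)$, the value $c(x,t_0(x))$ is unambiguous, and it equals the constant-in-$t$ value $c_0^x(x)$ propagated from $t=0$. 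This is not cosmetic: the trace of $c$ on this curve that is actually used later (in \eqref{eq:U_weak} and \eqref{eq:zeta_weak}, where $c_0^x(x)$ enters as boundary data on $\{\varphi=\varphi_0(x^0)\}$) is the one taken from the flow side $Q_{orig}$, whereas your argument as written only controls the limit from inside $\Omega_0$. With that one-line repair the proof is complete.
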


\subsection{Lagrange coordinates}
\label{sec2-Lagrange}
It is hard to trace back the history of the coordinates transformation described in this subsection. Many authors describe the transformation with no citations of previous work. The oldest reference we found is \cite{Courant} cited in \cite{Wa87} in the context of gas dynamics equations. The idea is also presented in the lectures by Gelfand \cite{Gelfand} for the case of an arbitrary system of conservation laws. The splitting technique for the system \eqref{eq:main_system_chem_flood} using the Lagrange coordinate transformation is presented in \cite{PiBeSh06}. It is later developed and applied to different systems by many authors (see \cite{Pires2021} and references therein).

\begin{proposition}[Proposition 4.3, \cite{MR2024}]
\label{prop:contour_integral}
For any solution $(s,c)$ the differential form $f(s,c)\,dt-s\,dx$ derived from the first equation of \eqref{eq:main_system_chem_flood} is exact, i.e. on any closed curve $\partial \Omega$ with finite number of shock points we have
\begin{equation}\label{eq:contour_int}
\oint\limits_{\partial \Omega} f(s,c)\,dt - s\, dx = 0.
\end{equation}
Similarly, from the second equation of \eqref{eq:main_system_chem_flood} we derive the exact form $(cs + a(c)) dx - f(s,c) dt$, therefore
\begin{equation}\label{eq:contour_int2}
    \oint\limits_{\partial\Omega} c (s\, dx - f(s,c)\, dt) + \oint\limits_{\partial\Omega} a(c)\, dx = 0.
\end{equation}
\end{proposition}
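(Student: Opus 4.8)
The plan is to show that each of the two one-forms is closed on the smooth part of the solution as a direct consequence of the corresponding conservation law in \eqref{eq:main_system_chem_flood}, and then to verify that the jump across each shock curve contributes nothing to the contour integral because of the Rankine--Hugoniot conditions \eqref{eq:RH-1}. Writing $\omega_1 = f(s,c)\,dt - s\,dx$ for the form attached to the first equation and $\omega_2 = c\,f(s,c)\,dt - (cs+a(c))\,dx$ for the one attached to the second, the goal reduces to proving $\oint_{\partial\Omega}\omega_1 = 0$ and $\oint_{\partial\Omega}\omega_2 = 0$; the latter is exactly \eqref{eq:contour_int2}, since $\omega_2 = -\big(c(s\,dx - f(s,c)\,dt) + a(c)\,dx\big)$.

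First I would compute the exterior derivatives on any open region where $(s,c)$ is continuously differentiable. A direct calculation gives $d\omega_1 = \big(s_t + f(s,c)_x\big)\,dx\wedge dt$, which vanishes precisely by the first equation of \eqref{eq:main_system_chem_flood}, and $d\omega_2 = \big((cs+a(c))_t + (cf(s,c))_x\big)\,dx\wedge dt$, which vanishes by the second equation. Hence both forms are closed on each smooth subdomain, and on any loop bounding a shock-free region the integral is already zero by Stokes' theorem.

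Next I would handle the shocks. By (W1) the region $\Omega$ meets only finitely many $\mathcal C^1$-smooth discontinuity curves, so I would partition $\Omega$ into finitely many subdomains $\Omega_i$ on each of which $(s,c)$ is $\mathcal C^1$ up to the boundary. Applying Stokes' theorem piecewise and using closedness yields $\oint_{\partial\Omega_i}\omega = \iint_{\Omega_i} d\omega = 0$ for both forms. Summing over $i$, each interior shock segment is traversed twice with opposite orientation, so the net contribution collapses to the integral along the shocks of the jump of the tangential component. Parametrizing a shock by $x=\gamma(t)$ with velocity $v=\gamma_t$, one has $dx = v\,dt$ along the curve, so the tangential part of $\omega_1$ is $(f(s,c)-vs)\,dt$ and its jump is $\big([f(s,c)]-v[s]\big)\,dt$, which is zero by the first relation in \eqref{eq:RH-1}; similarly the jump of the tangential part of $\omega_2$ is $\big([cf(s,c)]-v[cs+a(c)]\big)\,dt$, which vanishes by the second relation. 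Therefore all interior contributions cancel and $\oint_{\partial\Omega}\omega_1 = \oint_{\partial\Omega}\omega_2 = 0$.

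The main obstacle is purely the bookkeeping at the discontinuities: one must track the orientation of each shock segment relative to its two adjacent subdomains so that the one-sided traces of $f$, $s$ and $c$ entering the jump terms are exactly the $(s^\pm,c^\pm)$ appearing in \eqref{eq:RH-1}, and one must note that the finitely many transversal crossings of $\partial\Omega$ with shock curves are a measure-zero set of points on the contour and hence do not affect the line integral. Once the orientations are matched correctly, the Rankine--Hugoniot cancellation is immediate and no finer analysis of the shock structure is required.
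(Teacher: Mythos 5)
Your proof is correct and follows essentially the same route as the paper's source for this statement (\cite{MR2024}, Proposition 4.3): closedness of both forms in smoothness regions via (W3), piecewise application of Green/Stokes over the subdomains cut out by the locally finite shock curves, and cancellation of the shock-line contributions through the Rankine--Hugoniot relations \eqref{eq:RH-1}, which the paper derives from (W4) in Sect.~3.2. A minor remark: you correctly work with the form $c f(s,c)\,dt - (cs+a(c))\,dx$ matching \eqref{eq:contour_int2}, whereas the inline statement of the proposition omits the factor $c$ in front of $f$ --- an apparent typo, not a gap in your argument.
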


We denote by $\varphi$ the potential such that
\begin{equation} 
\label{eq:dPhi}
    d\varphi=f(s,c)\,dt-s\,dx.
\end{equation}
To explain the physical meaning of $\varphi$ let us consider any trajectory $\nu$ connecting $(0, 0)$ and $(x, t)$.
When $s$ denotes the saturation of some liquid, the potential $\varphi(x, t)$ is equal to the amount of this liquid
passing through the trajectory:
\begin{equation}
\label{eq:phi}
    \varphi(x,t)=\int\limits_{\nu} f(s,c)\,dt-s\,dx.
\end{equation}
The coordinate change $(x, t) \to (\varphi, x)$ is only applicable in the area $Q_{orig} = Q\setminus \overline{\Omega}_0$, where the saturation $s$ and the flow function $f(s,c)$ are not zero. It keeps the $x$ coordinate, so it maps the axis $\Gamma_t = \{x=0, t\geqslant 0\}$ onto $\Gamma_\varphi = \{\varphi\geqslant 0, x=0\}$. 
The segment $[0, x^0]\times\{0\}$ maps into a curve $( \varphi_0(x), x)$, where
\begin{equation}
\label{eq:def_varphi_0}
\varphi_0(x) = -\int\limits_0^x s^x_0(r) \, dr.
\end{equation}
The curve $(x, t_0(x))$ for $x > x^0$ maps into a horizontal line beginning at the point $(\varphi_0(x^0), x^0)$. Therefore $Q_{orig}$ maps into 
\[
Q_{lagr} = Q \cup \{ (\varphi, x) : 0 < x \leqslant x^0, 0 > \varphi > \varphi_0(x) \} \cup \left((\varphi_0(x^0), 0) \times (x^0, +\infty)\right),
\]
see Fig.~\ref{fig:orig-lagr-areas}.

Corollary \ref{corollary:zero_flow_area} guaranties that there is a reverse transform given by
\[
dt=\frac1{f(s,c)}\,d\varphi+\frac s{f(s,c)}\,dx,
\]
and the denominators are locally separated from zero, therefore this coordinate change is a piecewise $\mathcal C^1$-diffeomorphism.
Since $\mathcal C^1$-smooth curves preserve their smoothness properties under any diffeomorphism, all discontinuity curves map into $\mathcal C^1$-smooth discontinuity curves. 

Substituting \eqref{eq:dPhi} into \eqref{eq:contour_int2}, we get that the form $  - c \,d \varphi + a(c) \,dx$ is also exact in any area in $Q_{lagr}$ where the images of $(s, c)$ are $\mathcal C^1$-smooth. Therefore, it is closed too and leads to the identity
\[
0 = d(c \,d \varphi - a(c)\, dx) = \left( \frac{\partial c}{\partial x} + \frac{\partial a(c)}{\partial \varphi}\right)\, dx \wedge d\varphi.
\]
Together with the identity
\[
0 = d(dt) = d\left( \dfrac{1}{f} \, d\varphi + \dfrac{s}{f} \, dx\right) = \left(\frac{\partial}{\partial x}\left(\frac1f\right) - \frac{\partial}{\partial \varphi}\left(\frac s f\right)\right) \, dx \wedge d\varphi
\]
it gives us inside the areas of $\mathcal C^1$-smoothness the classical system
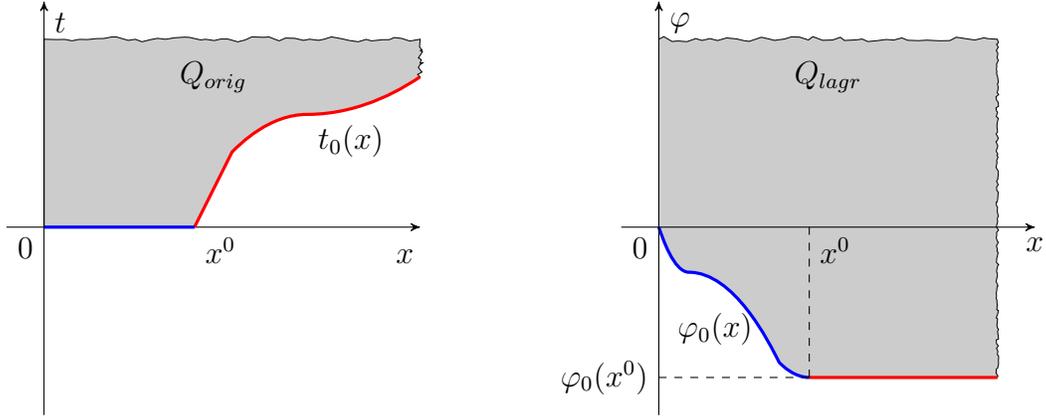
\begin{figure}
    \begin{minipage}{\linewidth}
    \begin{center}
    \def\lowborder{-5}
\def\leftborder{-1}
\def\upperborder{6}
\def\rightborder{10}
\def\startpoint{\rightborder * 0.4}
\def\phinote{0.8 * \lowborder}
\def\eps{1}
\begin{tikzpicture}[>=stealth', yscale=0.5, xscale=0.5]
\draw[thin,->] ({\leftborder}, 0) -- (\rightborder, 0) node[below] {$x\phantom{x^0}$};
\begin{scope}
\filldraw[fill=black!20, ultra thin]
(\startpoint, 0)  -- (\startpoint+1, 2) parabola[bend at end] (\startpoint + 3, 3) parabola (\startpoint + 6, 4)
  decorate [decoration={random steps,segment length=1pt,amplitude=1pt}] {-- (\startpoint + 6, \upperborder - \eps)}
 decorate [decoration={random steps,segment length=3pt,amplitude=1pt}] {--(0, \upperborder - \eps)}
 -- (0, 0) -- cycle;
\draw[very thick, red] (\startpoint, 0)  -- (\startpoint+1, 2) parabola[bend at end] (\startpoint + 3, 3) parabola (\startpoint + 6, 4);
\node[below right] at (\startpoint + 3, 3) {$t_0(x)$} ;
\node[] at (\rightborder * 0.45, 4) {$Q_{orig}$};
\node[below right] at (\startpoint, 0) {$x^0$};
\node[below left] at (0, 0) {$0$};
\draw[very thick, blue] (0, 0) -- (\startpoint, 0);
\draw[thin,->] (0,\lowborder) -- (0,\upperborder) node[below right ] {$t$};
\end{scope}
\end{tikzpicture}
\hspace{1cm}
\begin{tikzpicture}[>=stealth', yscale=0.5, xscale=0.5]
\begin{scope}
\filldraw[fill=black!20, ultra thin]
(0, 0)  parabola[bend at end] (\startpoint * 0.2, \phinote * 0.3)
parabola (\startpoint * 0.8, \phinote *0.9) parabola[bend at end] (\startpoint, \phinote) -- (\rightborder - \eps, \phinote)
  decorate [decoration={random steps,segment length=1pt,amplitude=0.5pt}] {-- (\rightborder - \eps, \upperborder - \eps)}
 decorate [decoration={random steps,segment length=3pt,amplitude=1pt}] {--(0, \upperborder - \eps)}
 -- (0, 0) -- cycle;
 \draw[thin,->] ({\leftborder}, 0) -- (\rightborder, 0) node[below] {$x$};
\draw[very thick, blue] (0, 0)  parabola[bend at end] (\startpoint * 0.2, \phinote * 0.3)
parabola (\startpoint * 0.8, \phinote *0.9) parabola[bend at end] (\startpoint, \phinote);
\node[] at (\rightborder * 0.45, 4) {$Q_{lagr}$};
\node[below right] at (\startpoint, 0) {$x^0$};
\node[below left] at (0, 0) {$0$};
\draw[very thick, red] (\startpoint, \phinote) -- (\rightborder - \eps, \phinote);
\draw[dashed]  (\startpoint, \phinote) -- (\startpoint, 0);
\draw[dashed] (\startpoint, \phinote) -- (0, \phinote);
\node[left] at (0, \phinote) {$\varphi_0(x^0  )$};
\node[below] at (\startpoint/2-0.5, \phinote/2) {$\varphi_0(x)$};
\end{scope}
\draw[thin,->] (0,\lowborder) -- (0,\upperborder) node[below right ] {$\varphi$};
\end{tikzpicture}
    \end{center}
    \end{minipage}
    \caption{Areas $Q_{orig}$ (grey area on the left), and $Q_{lagr}$ (grey area on the right). The red curve on the left is mapped onto the red line on the right. The blue line on the left is mapped onto the blue curve on the right.}
    \label{fig:orig-lagr-areas}
\end{figure}
\begin{align*}
\begin{split}
    \frac{\partial}{\partial x}\left(\frac1f\right) - \frac{\partial}{\partial \varphi}\left(\frac s f\right)&=0, \\
    \frac{\partial c}{\partial x} + \frac{\partial a(c)}{\partial \varphi}&=0. 
\end{split}
\end{align*}
We use the notation
\begin{equation} 
\label{eq:def-U-F}
\mathcal{U}(\varphi, x)=\frac1{f(s(x,t),c(x,t))}, \quad \zeta(\varphi,x) = c(x,t), \quad\text{and}\quad\mathcal F(\mathcal U,\zeta)=-\frac{s}{f(s,c)}
\end{equation}
in order to transform this system into the system of conversation laws
\begin{align}
       \mathcal U_x + \mathcal F(\mathcal U,\zeta)_\varphi & = 0,\label{eq:U-lagr-eqn}\\
    \zeta_x + a(\zeta)_\varphi&=0. \label{eq:c-lagr-eqn}
\end{align}

\begin{proposition}[Proposition 4.7, \cite{MR2024}]
Suppose that $(s,c)$ is a W-solution of the problem \eqref{eq:main_system_chem_flood}  (see Definition~\ref{def:solution})
with initial and boundary conditions \eqref{eq:Initial_boundary_problem} satisfying the conditions (S1)--(S3). Then the functions $(\mathcal U, \zeta)$ given by the formulae \eqref{eq:def-U-F} satisfy the integral equations
\begin{align}
\label{eq:U_weak}
\begin{split}
    \iint\limits_{Q_{lagr}} \mathcal U \widetilde \psi_x + \mathcal F(\mathcal U, \zeta) \widetilde \psi_\varphi  & \,d\varphi\,dx +
    \int\limits_0^\infty\mathcal U_0^\varphi(\varphi) \widetilde \psi(\varphi, 0)\,d\varphi \\ 
    & + \int\limits_{x^0}^\infty
 \mathcal F(\mathcal U(\varphi_0(x^0)+0, x), c_0^x(x))\widetilde \psi(\varphi_0(x^0), x)\,dx = 0
\end{split}
\end{align}
and
\begin{align}
\label{eq:zeta_weak}
\begin{split}
    \iint\limits_{Q_{lagr}} \zeta\widetilde\psi_x + a(\zeta) \widetilde\psi_\varphi & \,d\varphi\,dx + 
    \int\limits_0^{x^0}  (s_0^x(x)c_0^x(x) + a(s_0^x(x))) \widetilde\psi(\varphi_0(x), x)\,dx \\
    {} & + \int\limits_{x^0}^\infty  a(c_0^x(x)) \widetilde\psi(\varphi_0(x^0), x)\,dx +
    \int\limits_{0}^\infty \zeta_0^\varphi(\varphi)\widetilde\psi(\varphi, 0) \, d\varphi  = 0
\end{split}
\end{align}
for all test function $\widetilde \psi \in \mathcal D(Q_{lagr})$,
where the initial values $\mathcal U_0^\varphi$ and $\zeta_0^\varphi$ are given by 
\begin{equation}
\label{eq:U_zeta_weak_initial_values}
\mathcal U_0^\varphi(\varphi(0, t)) = \dfrac{1}{f(s_0^t(t), c_0^t(t))}, \qquad \zeta_0^\varphi (\varphi(0, t)) = c_0^t(t).
\end{equation}
\end{proposition}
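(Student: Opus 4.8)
The plan is to show that the Lagrange-transformed pair $(\mathcal U,\zeta)$ is a weak solution of the conservation laws \eqref{eq:U-lagr-eqn}--\eqref{eq:c-lagr-eqn} by reducing each integral identity to a single application of Stokes' theorem for a test-function-weighted $1$-form, exploiting that the two relevant forms are \emph{globally} exact by Proposition~\ref{prop:contour_integral}. For \eqref{eq:U_weak} I would work with $\omega_{\mathcal U}=\mathcal U\,d\varphi-\mathcal F\,dx$, which by \eqref{eq:def-U-F} is precisely $dt=\tfrac1f\,d\varphi+\tfrac sf\,dx$ written in Lagrange coordinates; for \eqref{eq:zeta_weak} I would use $\omega_\zeta=\zeta\,d\varphi-a(\zeta)\,dx$, whose negative is exact by \eqref{eq:contour_int2}. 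In both cases $d\omega_{\mathcal U}=(\mathcal U_x+\mathcal F_\varphi)\,dx\wedge d\varphi$ and $d\omega_\zeta=(\zeta_x+a(\zeta)_\varphi)\,dx\wedge d\varphi$ vanish in every region of $\mathcal C^1$-smoothness by \eqref{eq:U-lagr-eqn}--\eqref{eq:c-lagr-eqn}, so the forms are closed away from the shocks.

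The core computation is as follows. Fix $\widetilde\psi\in\mathcal D(Q_{lagr})$; since $\widetilde\psi$ has compact support and the shock set is locally finite by (W1), only finitely many smooth subregions $\Omega_i$ meet $\supp\widetilde\psi$. On each $\Omega_i$ the closedness of $\omega_{\mathcal U}$ gives $d(\widetilde\psi\,\omega_{\mathcal U})=d\widetilde\psi\wedge\omega_{\mathcal U}=(\mathcal U\widetilde\psi_x+\mathcal F\widetilde\psi_\varphi)\,dx\wedge d\varphi$, and Stokes yields $\iint_{\Omega_i}(\mathcal U\widetilde\psi_x+\mathcal F\widetilde\psi_\varphi)\,dx\wedge d\varphi=\oint_{\partial\Omega_i}\widetilde\psi\,\omega_{\mathcal U}$; I then sum over $i$ (and argue identically for $\omega_\zeta$). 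The interior shock edges are each traversed twice with opposite orientation, and their contributions cancel because the \emph{tangential} part of $\omega_{\mathcal U}$ along a shock is continuous: indeed $\omega_{\mathcal U}=dt$ with $t$ single-valued and continuous on $\overline{Q_{lagr}}$, so its restriction to the (continuous) shock curve agrees from both sides, and the analogous statement for $\omega_\zeta$ is exactly the exactness \eqref{eq:contour_int2}. This is the step where the Rankine--Hugoniot conditions \eqref{eq:RH-1} actually enter, now repackaged as single-valuedness of the potentials $\varphi$ and $t$.

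What remains is $\oint_{\partial Q_{lagr}}\widetilde\psi\,\omega$, where $\partial Q_{lagr}$ consists of the vertical axis $x=0$ (image of $\Gamma_t$), the initial curve $(\varphi_0(x),x)$ for $0\le x\le x^0$, and the horizontal ray $\varphi=\varphi_0(x^0)$ for $x>x^0$ (cf.\ Fig.~\ref{fig:orig-lagr-areas}). On $x=0$ one has $dx=0$, and \eqref{eq:U_zeta_weak_initial_values} identifies the integrand as $\mathcal U_0^\varphi\widetilde\psi$ (resp.\ $\zeta_0^\varphi\widetilde\psi$), giving the $x=0$ data terms. On the horizontal ray $d\varphi=0$, and Proposition~\ref{prop:c_on_zero_boundary} gives $\zeta=c_0^x$, producing the terms $\mathcal F(\mathcal U(\varphi_0(x^0)+0,x),c_0^x(x))\widetilde\psi$ and $a(c_0^x)\widetilde\psi$, the one-sided trace reflecting the approach from inside $Q_{lagr}$. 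On the initial curve, $\varphi_0'(x)=-s_0^x(x)$ by \eqref{eq:def_varphi_0}, so $d\varphi=-s_0^x\,dx$; since $\mathcal F=-s_0^x/f$ there, the $\mathcal U$-integrand $(\mathcal U\varphi_0'-\mathcal F)\widetilde\psi=\big(-\tfrac{s_0^x}{f}+\tfrac{s_0^x}{f}\big)\widetilde\psi\equiv 0$ vanishes identically, which explains the absence of an initial-curve term in \eqref{eq:U_weak}, whereas for $\zeta$ it yields the trace of the conserved density $cs+a(c)$, namely $\big(s_0^x(x)c_0^x(x)+a(c_0^x(x))\big)\widetilde\psi$, matching the corresponding term of \eqref{eq:zeta_weak}. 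The overall signs are fixed by the orientation of $\partial Q_{lagr}$ and are routine.

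The hard part is not the algebra but controlling the degeneracy of the coordinate change at the zero-flow boundary, where $f\to0$ and $\mathcal U=1/f\to\infty$. The key observation that makes every integral finite is the Jacobian identity $d\varphi\wedge dx=-f\,dx\wedge dt$, whence $\mathcal U\,d\varphi\,dx=dx\,dt$ and $\mathcal F\,d\varphi\,dx=-s\,dx\,dt$ remain bounded; together with Corollary~\ref{corollary:zero_flow_area} (local separation of $s$ from $0$ off $\overline{\Omega}_0$) and the $\mathcal C^1$-regularity of $t_0(x)$ from Lemma~\ref{lemma:t0_is_shocks}, this legitimizes the application of Stokes up to the boundary and the existence of the one-sided traces. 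One must also verify that the corner $(\varphi_0(x^0),x^0)$ and any accumulation of shock curves toward the boundary contribute nothing, which follows from continuity of the potentials and local finiteness of the shock set inside $\supp\widetilde\psi$.
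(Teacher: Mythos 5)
Your proposal is correct and follows what is essentially the paper's own route (the proof of Proposition~4.7 in \cite{MR2024}, to which the paper defers): Green--Stokes on the finitely many smooth pieces of $\supp\widetilde\psi$, cancellation of the interior shock contributions via the exactness of $\omega_{\mathcal U}=\mathcal U\,d\varphi-\mathcal F\,dx=dt$ and of $\omega_\zeta=\zeta\,d\varphi-a(\zeta)\,dx$ from Proposition~\ref{prop:contour_integral} (equivalently, the Rankine--Hugoniot conditions \eqref{eq:RH-2}), and evaluation of the boundary traces using \eqref{eq:def_varphi_0}, \eqref{eq:U_zeta_weak_initial_values}, Corollary~\ref{corollary:zero_flow_area} and Proposition~\ref{prop:c_on_zero_boundary}. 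Your computation that the $\mathcal U$-form contribution along the initial curve vanishes identically correctly explains the absence of that term in \eqref{eq:U_weak}, and your trace $s_0^x c_0^x + a(c_0^x)$ on that curve is the intended one --- the expression $a(s_0^x(x))$ in \eqref{eq:zeta_weak} is a typo for $a(c_0^x(x))$.
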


\begin{remark}
The same reasoning as in \cite[Lemma 2.2.1]{Serre1} shows that on every shock the equations in the weak form \eqref{eq:U_weak} and \eqref{eq:zeta_weak} result in the Rankine--Hugoniot condition
\begin{equation} 
\label{eq:RH-2}
\begin{split}
    v^*[\mathcal U]&=[\mathcal F(\mathcal U,\zeta)],
    \\
    v^*[\zeta]&=[a(\zeta)],
\end{split}
\end{equation} 
where $v^*$ is the velocity of the shock between states $(\mathcal U^-, \zeta^-)$ and $(\mathcal U^+, \zeta^+)$. Here, like in original coordinates, $[q(\mathcal U, \zeta)]=q(\mathcal U^+, \zeta^+)-q(\mathcal U^-, \zeta^-)$.
\end{remark}

Properties of the new flow function $\mathcal F$ (see Fig.~\ref{fig:BLf_lagr}) that correspond to the properties (F1), (F2) of the function $f$ are listed below.
\begin{figure}[htbp]
    \centering
    \includegraphics[width=0.55\textwidth]{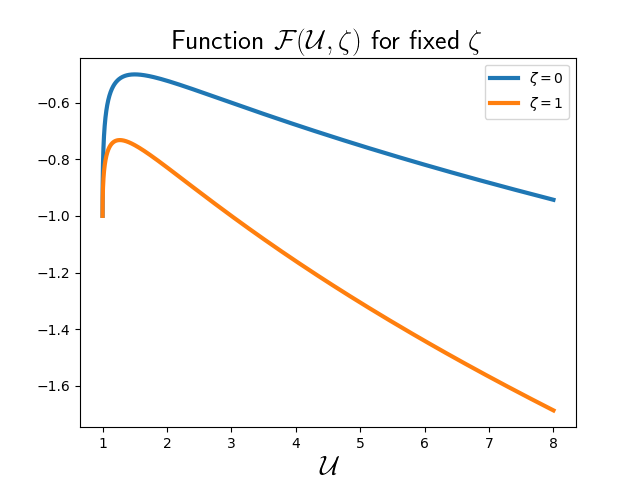}
    \caption{The function $\mathcal F(\mathcal U,\zeta)$ corresponding to the flow function $f(s, c)$ plotted in Fig.~\ref{fig:BL_ads}a.}
    \label{fig:BLf_lagr}
\end{figure}
\begin{proposition}
\label{prop:new-flow-function-properties}
For all $\zeta\in[0,1]$ the following properties of the function $\mathcal F$ are fulfilled
\begin{itemize}
\item[$\bullet$]
$\mathcal F \in \mathcal C^2([1,+\infty)\times[0,1])$; 
\item[$\bullet$]
$\mathcal F(\mathcal U,\zeta) < 0$ for all $\,\mathcal U\in [1,+\infty)$;
\item[$\bullet$]
$\mathcal F(1, \zeta) = - 1$;
\item[$\bullet$]
$\lim\limits_{\mathcal U\to\infty}\mathcal{F}(\mathcal U, \zeta) = -\infty$;
\item[$\bullet$]
$\lim\limits_{\mathcal U\to1}\mathcal{F}_{\mathcal U}(\mathcal U, \zeta) = +\infty$;
\item[$\bullet$]
$\lim\limits_{\mathcal U\to\infty}\mathcal{F}_\mathcal{U}(\mathcal U, \zeta) = 0$.
 \end{itemize}
\end{proposition}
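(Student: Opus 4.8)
The plan is to exploit that the definitions \eqref{eq:def-U-F} make $\mathcal{U}$ and $s$ mutually inverse for each fixed $\zeta$: since $\mathcal{U}=1/f(s,\zeta)$, we have $f(s,\zeta)=1/\mathcal{U}$ and $\mathcal{F}=-s/f=-s\,\mathcal{U}$, where $s=s(\mathcal{U},\zeta)$ is obtained by inverting $f(\cdot,\zeta)$. First I would record that for each fixed $\zeta\in[0,1]$ the map $f(\cdot,\zeta)\colon[0,1]\to[0,1]$ is, by (F1) and (F2), a strictly increasing $\mathcal{C}^2$ bijection with $f(0,\zeta)=0$ and $f(1,\zeta)=1$. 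Hence $\mathcal{U}=1/f$ decreases from $+\infty$ to $1$ as $s$ runs over $(0,1]$, which both explains why the natural domain is $[1,+\infty)\times[0,1]$ and shows that $s=s(\mathcal{U},\zeta)$ is a well-defined function there, with $s(1,\zeta)=1$ and $s(\mathcal{U},\zeta)\to0$ as $\mathcal{U}\to+\infty$.

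For the regularity (first bullet) I would apply the implicit function theorem to $f(s,\zeta)-1/\mathcal{U}=0$. On the open set $\mathcal{U}\in(1,+\infty)$ the corresponding saturation satisfies $0<s<1$, so $f_s>0$ by (F2), and since $f\in\mathcal{C}^2$ the solution $s(\mathcal{U},\zeta)$, and therefore $\mathcal{F}=-s\,\mathcal{U}$, is $\mathcal{C}^2$ there. I would then note that this argument degenerates at $\mathcal{U}=1$: because $f_s(1,\zeta)=0$, the map $s\mapsto\mathcal{U}$ has a critical point at $s=1$, so $\mathcal{F}$ is $\mathcal{C}^2$ in the interior while remaining continuous up to the boundary, the one-sided derivative behaviour at $\mathcal{U}=1$ being recorded separately by the fifth bullet.

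The sign and boundary-value bullets are immediate from $\mathcal{F}=-s\,\mathcal{U}$: for $\mathcal{U}\in[1,+\infty)$ one has $s\in(0,1]$, so $\mathcal{F}<0$, and at $\mathcal{U}=1$ the value $s=1$ gives $\mathcal{F}(1,\zeta)=-1$. For $\mathcal{U}\to+\infty$ I would write $\mathcal{F}=-s/f$ and use $f(s,\zeta)/s\to f_s(0,\zeta)=0$ from (F2), so that $s/f\to+\infty$ and $\mathcal{F}\to-\infty$. The derivative limits rest on differentiating $f(s(\mathcal{U},\zeta),\zeta)=1/\mathcal{U}$, which gives $s_{\mathcal{U}}=-1/(\mathcal{U}^2 f_s)$ and hence, using $\mathcal{U}=1/f$, the clean formula $\mathcal{F}_{\mathcal{U}}=1/(\mathcal{U} f_s)-s=f/f_s-s$. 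As $\mathcal{U}\to1$ we have $s\to1$ and $f_s\to f_s(1,\zeta)=0$ from above while $f\to1$, so $f/f_s\to+\infty$ and $\mathcal{F}_{\mathcal{U}}\to+\infty$.

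I expect the main obstacle to concern the two facts that both rely on the degeneracy $f_s=0$ at the endpoints $s=0$ and $s=1$: the loss of $\mathcal{C}^1$-regularity at $\mathcal{U}=1$, which must be reconciled with the first bullet by reading it as interior smoothness, and the indeterminate limit of $f/f_s-s$ as $\mathcal{U}\to+\infty$. For the latter the decisive ingredient is (F3'): since $f_{ss}>0$ on $(0,s^f)$, the derivative $f_s$ is increasing there, so $f(s,\zeta)=\int_0^s f_s(\sigma,\zeta)\,d\sigma\le s\,f_s(s,\zeta)$, giving $0<f/f_s\le s$; together with $-s\to0$ this forces $\mathcal{F}_{\mathcal{U}}\to0$ as $s\to0$, that is, as $\mathcal{U}\to+\infty$.
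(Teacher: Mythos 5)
Your proof is correct, and in fact the paper offers no proof to compare against: Proposition~\ref{prop:new-flow-function-properties} is stated bare, introduced only by the remark that the listed properties of $\mathcal F$ ``correspond to'' (F1), (F2), with the underlying computation inherited from the Lagrange-coordinate analysis of the predecessor paper. Your route --- inverting $f(\cdot,\zeta)$ to get $s=s(\mathcal U,\zeta)$, writing $\mathcal F=-s\,\mathcal U$, and deriving $\mathcal F_{\mathcal U}=f/f_s-s$ by implicit differentiation of $f(s(\mathcal U,\zeta),\zeta)=1/\mathcal U$ --- is the natural (essentially the only) argument, and all six bullets check out. Two of your side observations deserve emphasis. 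First, you are right that the first bullet cannot hold literally on the closed strip $[1,+\infty)\times[0,1]$: since $f_s(1,\zeta)=0$, the fifth bullet forces $\mathcal F_{\mathcal U}\to+\infty$ at $\mathcal U=1$, so $\mathcal C^2$ must be read as interior regularity (continuity up to the boundary), exactly as you resolve it; this is visible in the vertical tangent of Fig.~\ref{fig:BLf_lagr}. Second, your use of (F3') for the last bullet is not merely convenient but necessary: the bound $f(s,\zeta)=\int_0^s f_s(\sigma,\zeta)\,d\sigma\leqslant s\,f_s(s,\zeta)$ on $(0,s^f)$, hence $0<f/f_s\leqslant s$, is what tames the indeterminate limit, and under (F1)--(F2) alone the claim can fail --- one can build $f\in\mathcal C^2$ with $f_s>0$, $f_s(0,\zeta)=0$ whose derivative has deep valleys $s_n\downarrow 0$ with $f_s(s_n,\zeta)$ exponentially smaller than $f(s_n,\zeta)$, making $f/f_s$ blow up along $s_n$. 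So the paper's framing sentence attributing the properties to (F1), (F2) is slightly loose, and your proof correctly locates the one place where (F3') enters.
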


\section{Entropy conditions in Lagrange coordinates}
\label{sec:entropy}

\subsection{Mapping shocks to Lagrange coordinates}
Let us recall the notation from \cite{MR2024} pertaining to the mapping of shocks between the coordinate systems.

Consider a shock in original coordinates at the point $(x_1, t_1)$ with values
\[
s^\pm = s(x_1\pm 0, t_1), \quad c^\pm = c(x_1\pm 0, t_1).
\]
Denote $\theta_c(s) = \frac{1}{f(s,c)}$ and $\vartheta_c = \theta_c^{-1}$ its inverse function with respect to its argument~$s$. Using these functions we map
\[
\mathcal U^{[+]} = \theta_{c^+}(s^+), \quad \mathcal U^{[-]} = \theta_{c^-}(s^-), \quad \zeta^{[+]} = c^+, \quad \zeta^{[-]} = c^-,
\]
\[s^+ = \vartheta_{\zeta^{[+]}}(\mathcal U^{[+]}), \quad s^- = \vartheta_{\zeta^{[-]}}(\mathcal U^{[-]}), \quad c^+ = \zeta^{[+]}, \quad c^- = \zeta^{[-]}. 
\]
Note, that in the original coordinates values $s^\pm$ correspond to $x\to x_1\pm 0$ respectively. The shock velocity in original coordinates is always positive due to Proposition \ref{prop:inadmissible_shocks}, therefore, $s^\pm$ correspond to $t\to t_1\mp 0$:
\[
s^\pm = s(x_1, t_1 \mp 0), \quad c^\pm = c(x_1, t_1\mp 0).
\]
Further, due to \eqref{eq:dPhi} and (F1), when $x = x_1$ is fixed, $t\to t_1\mp 0$ correspond to $\varphi\to \varphi_1\mp 0$ for the point $(\varphi_1, x_1)$ on a corresponding shock in Lagrange coordinates, and so do $\mathcal U^{[\pm]}$:
\[
\mathcal U^{[\pm]} = \mathcal U(\varphi_1 \mp 0, x_1), \quad \zeta^{[\pm]} = \zeta(\varphi_1 \mp 0, x_1).
\]
But for the equations \eqref{eq:U-lagr-eqn}, \eqref{eq:c-lagr-eqn} in Lagrange coordinates the $x$ axis plays the role of time and $\varphi$ the role of space, so we would like to denote $\mathcal U^\pm$ to correspond to $\varphi\to \varphi_1\pm 0$. Thus, we denote
\[
\mathcal U^+ = \mathcal U^{[-]}, \quad \mathcal U^- = \mathcal U^{[+]}, \quad \zeta^+ = \zeta^{[-]}, \quad \zeta^- = \zeta^{[+]},
\]
\[
\mathcal U^{\pm} = \mathcal U(\varphi_1 \pm 0, x_1), \quad \zeta^{\pm} = \zeta(\varphi_1 \pm 0, x_1),
\]
and obtain a one-to-one mapping of shocks in original $(s^\pm, c^\pm)$ and Lagrange $(\mathcal U^\pm, \zeta^\pm)$ coordinates:
\begin{equation}\label{shock_mapping}
\begin{split}
(\mathcal U^\pm, \zeta^\pm) &\to (s^\pm, c^\pm) = (\vartheta_{\zeta^\mp}(\mathcal U^\mp), \zeta^\mp), \\
(s^\pm, c^\pm) &\to (\mathcal U^\pm, \zeta^\pm) = (\theta_{c^\mp}(s^\mp), c^\mp).
\end{split}
\end{equation}

\subsection{Oleinik, Lax and entropy admissibility for $\mathcal U$-shocks}
\label{sec:sec2-s-shock-admissibility}
Recall from \cite{MR2024} the following lemma and a proposition that follows from it. Its proof does not rely on the properties we eliminated. We repeat the same argument as \cite[Sect.~5.2]{MR2024} to transfer the Oleinik's E-condition to the Lagrange coordinates and prove that it implies the entropy condition for any convex positive entropy, but in particular for entropy--entropy-flux pairs
$(|\mathcal U - k|, \mathcal G(\mathcal U, k))$, $k\in \mathbb{R}$, where
\[
\mathcal G(\mathcal U, k) = (\mathcal F(\mathcal U, \zeta) - \mathcal F(k, \zeta))\sign(\mathcal U - k).
\]

\begin{lemma}[Lemma 5.4, \cite{MR2024}]
\label{lemma-U-entropy-int-ineq}
For all $k\in\mathbb{R}$ and every positive test function $\psi \in \mathcal D^+(Q_{lagr})$ with $\supp \psi$ containing only $\mathcal U$-shocks (and no $\zeta$-shocks), we have the entropy condition
\begin{align}
\label{entropy_integral_inequality}
\begin{split}
0 \leqslant &\iint\limits_{Q_{lagr}} |\mathcal U - k|\psi_x + \mathcal G(\mathcal U, k) \psi_\varphi \,d\varphi \,dx - \iint\limits_{Q_{lagr}} \mathcal F_\zeta(k, \zeta) \zeta_\varphi \sign(\mathcal U-k) \psi \,d\varphi \,dx \\
&{} + \int\limits_0^{x^0} \Big( |\mathcal U(\varphi_0(x), x) - k| s_0^x(x) + \mathcal G(\mathcal U(\varphi_0(x), x), k)\Big) \psi(\varphi_0(x), x) \, dx \\
&{} + \int\limits_{x^0}^\infty \mathcal G(\mathcal U(\varphi_0(x^0)+0, x), k)\psi(\varphi_0(x^0), x)\,dx + \int\limits_0^\infty |\mathcal U(\varphi, 0) - k|\psi(\varphi, 0)\,d\varphi.
\end{split}
\end{align}
\end{lemma}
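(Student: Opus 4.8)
The plan is to read \eqref{eq:U-lagr-eqn} as a scalar conservation law in which $x$ plays the role of time, $\varphi$ the role of space, and $\zeta$ enters as a known piecewise-$\mathcal C^1$ coefficient that, on $\supp\psi$, is continuous because there are no $\zeta$-shocks there. For such a law $(|\mathcal U-k|,\mathcal G(\mathcal U,k))$ is the Kru\v{z}kov entropy pair, and \eqref{entropy_integral_inequality} is obtained by combining the entropy balance in the smoothness regions, the entropy dissipation across $\mathcal U$-shocks, and Green's formula on $Q_{lagr}$. I would follow \cite[Sect.~5.2]{MR2024} line by line, since none of the steps invoke the S-shaped hypothesis or the monotonicity of $f$ in $c$ that we have discarded.

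First, in every subdomain where $(\mathcal U,\zeta)$ is $\mathcal C^1$, differentiate the entropy and substitute $\mathcal U_x=-\mathcal F_{\mathcal U}\mathcal U_\varphi-\mathcal F_\zeta\zeta_\varphi$ from \eqref{eq:U-lagr-eqn}. Using $\partial_\varphi\mathcal F(k,\zeta)=\mathcal F_\zeta(k,\zeta)\zeta_\varphi$, the $\mathcal F_{\mathcal U}\mathcal U_\varphi$ contributions cancel and one is left, away from $\{\mathcal U=k\}$ and hence distributionally inside each region, with the pointwise entropy equality
\[
\partial_x|\mathcal U-k|+\partial_\varphi\mathcal G(\mathcal U,k)=-\mathcal F_\zeta(k,\zeta)\,\zeta_\varphi\,\sign(\mathcal U-k).
\]
This produces the two bulk integrals of \eqref{entropy_integral_inequality}, the source term being exactly the one carrying $\mathcal F_\zeta(k,\zeta)$.

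The heart of the matter is the shock term. Across a $\mathcal U$-shock the coefficient $\zeta$ is continuous, so \eqref{eq:U-lagr-eqn} reduces there to the genuine scalar law $\mathcal U_x+\mathcal F(\mathcal U,\zeta)_\varphi=0$ with frozen $\zeta$, and one must establish the correct sign of the entropy dissipation, namely
\[
v^*\,[\,|\mathcal U-k|\,]-[\,\mathcal G(\mathcal U,k)\,]\geqslant 0
\]
for the shock speed $v^*$ of \eqref{eq:RH-2} and every $k$. This is Oleinik's E-condition for the scalar flux $\mathcal F(\cdot,\zeta)$, and it is where admissibility (W4) is spent: vanishing-viscosity admissibility of the $s$-shock in the original coordinates yields Oleinik's condition there, and under the shock correspondence \eqref{shock_mapping} --- in which, for fixed $c=\zeta$, the map $s\mapsto\mathcal U=\theta_c(s)=1/f(s,c)$ is a monotone change of the dependent variable --- the chord inequalities transfer to the $\mathcal U$-shock. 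Keeping the orientation straight (the swap $\mathcal U^+=\mathcal U^{[-]}$, $\mathcal U^-=\mathcal U^{[+]}$ fixed before \eqref{shock_mapping}) so that the transported chord inequalities point the right way is the only delicate point, and I expect it to be the main obstacle.

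Finally, multiply the pointwise equality by $\psi\in\mathcal D^+(Q_{lagr})$, integrate over each smoothness subdomain, and move the derivatives onto $\psi$ by Green's formula. Summing over subdomains, every interior shock is traversed once from each side and the two contributions combine into $\int\big(v^*[|\mathcal U-k|]-[\mathcal G]\big)\psi\,dx\geqslant 0$, which supplies the inequality sign. The leftover contributions come from $\partial Q_{lagr}$: the vertical axis $\{x=0,\ \varphi\geqslant 0\}$ gives $\int_0^\infty|\mathcal U(\varphi,0)-k|\,\psi(\varphi,0)\,d\varphi$; the image of the initial line, the curve $(\varphi_0(x),x)$ on which $d\varphi=-s^x_0(x)\,dx$ by \eqref{eq:def_varphi_0}, collects into $\int_0^{x^0}\big(|\mathcal U-k|\,s^x_0+\mathcal G\big)\psi\,dx$; and the horizontal segment $\varphi=\varphi_0(x^0)$, $x>x^0$, contributes only its $\mathcal G$-component, $\int_{x^0}^\infty\mathcal G\,\psi\,dx$, with the initial values read off from \eqref{eq:U_zeta_weak_initial_values}. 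Assembling the bulk terms, the nonnegative shock term and these three boundary integrals yields exactly \eqref{entropy_integral_inequality}.
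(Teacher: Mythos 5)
Your proposal is correct and follows essentially the same route as the paper, which proves this lemma simply by repeating \cite[Sect.~5.2]{MR2024}: the Kru\v{z}kov pair $(|\mathcal U-k|,\mathcal G(\mathcal U,k))$ in the smoothness regions with the $\mathcal F_\zeta(k,\zeta)\zeta_\varphi$ source term, the Oleinik E-condition transferred through the shock mapping \eqref{shock_mapping} (where, as you note, $\zeta$ is continuous across $\mathcal U$-shocks and the $\pm$ swap is the only delicate point), and Green's formula producing exactly the three boundary integrals along $\{x=0\}$, $(\varphi_0(x),x)$ and $\{\varphi=\varphi_0(x^0)\}$. You also correctly identify why the argument survives the generalization: no step uses the S-shaped hypothesis or monotonicity of $f$ in $c$, which is precisely the paper's justification for citing the earlier proof verbatim.
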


\begin{proposition}[Proposition 2.7.1, \cite{Serre1}]
\label{proposition_entropy_solution_inequality}
For a fixed solution $\zeta$, given two solutions $\mathcal U$ and $\mathcal V$, for any positive test function $\psi \in \mathcal D^+(Q_{lagr})$ with $\supp \psi$ containing no $\zeta$-shocks, we have
\begin{align}
\label{entropy_solutions_inequality}
\begin{split}
0 \leqslant &\iint\limits_{Q_{lagr}} |\mathcal U - \mathcal V|\psi_x + \mathcal G(\mathcal U, \mathcal V) \psi_\varphi \,d\varphi \,dx \\
&{} + \int\limits_0^{x^0} |\mathcal U(\varphi_0(x), x) - \mathcal V(\varphi_0(x), x)| s_0^x(x) \psi(\varphi_0(x), x) \, dx \\
&{} + \int\limits_0^{x^0} \mathcal G(\mathcal U(\varphi_0(x), x), \mathcal V(\varphi_0(x), x)) \psi(\varphi_0(x), x) \, dx \\
&{} + \int\limits_{x^0}^\infty \mathcal G(\mathcal U(\varphi_0(x^0)+0, x), \mathcal V(\varphi_0(x^0)+0,x))\psi(0, x)\,dx \\
&{} + \int\limits_0^\infty |\mathcal U(\varphi, 0) - \mathcal V(\varphi, 0)|\psi(\varphi, 0)\,d\varphi.
\end{split}
\end{align}
\end{proposition}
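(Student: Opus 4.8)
The plan is to obtain \eqref{entropy_solutions_inequality} by Kru\v{z}kov's doubling-of-variables method, exactly in the form of \cite[Proposition 2.7.1]{Serre1}, feeding in Lemma~\ref{lemma-U-entropy-int-ineq} as the single-solution entropy inequality and promoting the constant $k$ to the competing solution. First I would write \eqref{entropy_integral_inequality} for $\mathcal U=\mathcal U(\varphi,x)$ with the frozen value $k=\mathcal V(\varphi',x')$, and then, by the symmetry $\mathcal U\leftrightarrow\mathcal V$, the same inequality for $\mathcal V=\mathcal V(\varphi',x')$ with $k=\mathcal U(\varphi,x)$. Both specializations are admissible because the inserted value is a genuine constant with respect to the integration variables of the opposite copy, and in each case the entropy--entropy-flux pair $(|\mathcal U-k|,\mathcal G(\mathcal U,k))$ is the one already validated in Lemma~\ref{lemma-U-entropy-int-ineq}.

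Next I would test the two inequalities against a single nonnegative weight on the doubled space that concentrates on the diagonal,
\[
\Psi(\varphi,x,\varphi',x')=\psi\!\Big(\tfrac{\varphi+\varphi'}{2},\tfrac{x+x'}{2}\Big)\,\omega_\varepsilon\!\Big(\tfrac{\varphi-\varphi'}{2}\Big)\,\omega_\varepsilon\!\Big(\tfrac{x-x'}{2}\Big),
\]
with $\psi\in\mathcal D^+(Q_{lagr})$ whose support avoids all $\zeta$-shocks and $\omega_\varepsilon$ an even mollifier, add the resulting inequalities, integrate in all four variables, and pass to the limit $\varepsilon\to0$. In this limit the two transport parts merge, $\Psi_x+\Psi_{x'}\to\psi_x$ and $\Psi_\varphi+\Psi_{\varphi'}\to\psi_\varphi$, so the bulk collapses onto $|\mathcal U-\mathcal V|\psi_x+\mathcal G(\mathcal U,\mathcal V)\psi_\varphi$, using $\mathcal G(\mathcal U,\mathcal V)=\mathcal G(\mathcal V,\mathcal U)$ and Lebesgue-point arguments for the almost-everywhere defined traces. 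The boundary integrals of Lemma~\ref{lemma-U-entropy-int-ineq} are carried along unchanged: the substitution $k\mapsto\mathcal V$ turns the initial line $x=0$ contribution into $|\mathcal U(\varphi,0)-\mathcal V(\varphi,0)|$, and the bottom boundary $\varphi=\varphi_0(x)$ (resp.\ $\varphi=\varphi_0(x^0)$) into the $s_0^x$-weighted modulus and the $\mathcal G(\mathcal U,\mathcal V)$ terms appearing in \eqref{entropy_solutions_inequality}; the symmetric copy supplies the matching $\mathcal V\mapsto\mathcal U$ halves, which combine into the stated symmetric expressions.

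The step I expect to be the main obstacle is the $\zeta$-dependence of the flux $\mathcal F$, which is precisely what distinguishes our situation from the autonomous setting of \cite[Prop.~2.7.1]{Serre1}. Unlike there, the single-solution inequality~\eqref{entropy_integral_inequality} carries the extra source term $-\mathcal F_\zeta(k,\zeta)\zeta_\varphi\sign(\mathcal U-k)$. When the constant $k$ is frozen to the other solution and the two copies are added, these sources do not disappear termwise; in the diagonal limit they leave a residual of the form $\sign(\mathcal U-\mathcal V)\big(\mathcal F_\zeta(\mathcal V,\zeta)-\mathcal F_\zeta(\mathcal U,\zeta)\big)\zeta_\varphi$. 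The crux is therefore to show that this residual does not obstruct \eqref{entropy_solutions_inequality}: one must verify that, on the support of $\psi$, it either cancels or is sign-definite in the direction that permits discarding it without reversing the inequality. This is exactly where the hypothesis that $\supp\psi$ contains no $\zeta$-shocks is decisive --- it guarantees $\zeta\in\mathcal C^1$ there, so the two evaluations of the coefficient along the diagonal share the same $\zeta$ and $\zeta_\varphi$ is a bona fide function (controlled through \eqref{eq:c-lagr-eqn} and (A3)) rather than a measure, leaving only the difference in the value argument of $\mathcal F_\zeta$. Establishing the correct sign of this $\mathcal F_\zeta$-residual, from the monotonicity and convexity structure inherited from (F1)--(F3') and (A1)--(A3), is the one genuinely new estimate; the remainder is the verbatim Kru\v{z}kov--Serre bookkeeping.
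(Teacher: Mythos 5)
Your overall plan --- doubling of variables with Lemma~\ref{lemma-U-entropy-int-ineq} as the single-solution input and the constant $k$ promoted to the other solution --- is exactly the route the paper intends (it simply cites \cite[Prop.~2.7.1]{Serre1} for $\zeta\equiv\mathrm{const}$ and the original proof of \cite[Theorem~1]{Kruzhkov} for the $\zeta$-dependent flux). However, your final paragraph, which you yourself flag as ``the one genuinely new estimate,'' contains a real error in the bookkeeping, and the program announced there would fail. The residual $\sign(\mathcal U-\mathcal V)\bigl(\mathcal F_\zeta(\mathcal V,\zeta)-\mathcal F_\zeta(\mathcal U,\zeta)\bigr)\zeta_\varphi$ does not survive to the limit at all: you dropped the flux commutator. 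In the two doubled inequalities the entropy fluxes carry \emph{different} $\zeta$-slots, $\mathcal G$ evaluated with $\zeta(\varphi,x)$ in one copy and with $\zeta(\varphi',x')$ in the other; since this discrepancy is $O(|\zeta(\varphi,x)-\zeta(\varphi',x')|)=O(\varepsilon)$ on the support of the mollifier while the derivatives of $\Psi$ along the anti-diagonal are $O(\varepsilon^{-1})$, the flux terms do \emph{not} ``collapse onto $\mathcal G(\mathcal U,\mathcal V)\psi_\varphi$'' cleanly but produce in the limit an extra $O(1)$ term $\sign(\mathcal U-\mathcal V)\bigl(\mathcal F_\zeta(\mathcal U,\zeta)-\mathcal F_\zeta(\mathcal V,\zeta)\bigr)\zeta_\varphi\,\psi$, which cancels your source residual \emph{identically}. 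This exact cancellation is the content of Kru\v{z}kov's computation for $(t,x)$-dependent fluxes; the hypothesis that $\supp\psi$ contains no $\zeta$-shocks (together with (W2)) serves precisely to make $\zeta$ locally Lipschitz so that this commutator converges --- not to provide a sign.

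That no sign condition can be the right fix is visible from two directions. First, a consistency check: for $\mathcal C^1$ solutions of the same equation one has, on $\{\mathcal U>\mathcal V\}$, $\partial_x(\mathcal U-\mathcal V)+\partial_\varphi\bigl(\mathcal F(\mathcal U,\zeta)-\mathcal F(\mathcal V,\zeta)\bigr)=0$ pointwise, i.e.\ the Kato relation holds with \emph{equality} and no residual whatsoever; a surviving sign-indefinite residual in the limit would contradict this. Second, the estimate you propose is unobtainable in this paper's setting: $\zeta_\varphi$ has no fixed sign, and sign-definiteness of $\sign(\mathcal U-\mathcal V)\bigl(\mathcal F_\zeta(\mathcal V,\zeta)-\mathcal F_\zeta(\mathcal U,\zeta)\bigr)$ would amount to a sign of $\mathcal F_{\mathcal U\zeta}$, i.e.\ a monotonicity-in-$c$ assumption on the flux --- exactly the hypothesis this paper is written to remove; (F1), (F2), (F3') and (A1)--(A3) give nothing of the sort. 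So the step you single out as the crux is both unnecessary and impossible; once the commutator is carried along correctly, the rest of your argument (the boundary-term transport and the symmetrization) is the standard Kru\v{z}kov--Serre bookkeeping and goes through as you describe.
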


In \cite{Serre1} this Proposition is proved for the case that corresponds to $\zeta \equiv const$. For the general case, see the original proof of Kru\v{z}kov's \cite[Theorem 1]{Kruzhkov}.

\subsection{Oleinik and Lax admissibility of $\zeta$-shocks}
Similarly, $\zeta$-shocks in the second equation do not rely on any properties of the flow function, therefore the following proposition holds with no changes.
\begin{proposition}[Proposition 5.8, \cite{MR2024}]
\label{prop-zeta-entropy-integral}
Denote $\mathcal A(\zeta, k) = (a(\zeta) - a(k))\sign(\zeta-k)$. Then on any admissible $\zeta$-shock given by the curve $(\Phi(x), x)$ inside $Q_{lagr}$ we have the entropy inequality
\[
[\mathcal A(\zeta, k)] \leqslant \dfrac{d\Phi}{dx}[|\zeta-k|], \quad k\in\mathbb{R},
\]
and therefore for every positive test function $\psi \in \mathcal D^+(Q_{lagr})$ we have the integral entropy condition
\begin{align*}
0 \leqslant &\iint\limits_{Q_{lagr}} |\zeta - k|\psi_x + \mathcal A(\zeta, k)\psi_\varphi \,d\varphi\,dx \\
&{} + \int\limits_0^{x^0} \Big( |\zeta(\varphi_0(x), x) - k| s_0^x(x) + \mathcal A(\zeta(\varphi_0(x), x), k)\Big) \psi(\varphi_0(x), x) \, dx \\
{} & + \int\limits_{x^0}^\infty \mathcal A(\zeta(\varphi_0(x^0),x), k)\psi(\varphi_0(x^0),x) \,dx + \int\limits_0^\infty |\zeta(\varphi, 0) - k|\psi(\varphi, 0)\, d\varphi.
\end{align*}
\end{proposition}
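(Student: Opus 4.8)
The plan is to use that the second equation \eqref{eq:c-lagr-eqn}, $\zeta_x + a(\zeta)_\varphi = 0$, is a \emph{scalar} conservation law in which $x$ plays the role of time, $\varphi$ the role of space, and the flux is the \emph{concave} function $a$ (by (A1)--(A3)). Since this equation is entirely decoupled from the first one and from the properties of $f$ and $\mathcal F$, the classical Oleinik--Kru\v{z}kov scalar theory applies without modification — which is exactly why the statement transfers from \cite{MR2024} unchanged. I would split the argument into the pointwise jump inequality on a single admissible $\zeta$-shock and its subsequent integration against a test function over $Q_{lagr}$.

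For the pointwise step, fix $k\in\mathbb R$ and a $\zeta$-shock along $\varphi = \Phi(x)$ with traces $\zeta^\pm = \zeta(\Phi(x)\pm0, x)$ and speed $w = d\Phi/dx$; by \eqref{eq:RH-2} it satisfies $w[\zeta] = [a(\zeta)]$. Admissibility, via the mapping \eqref{shock_mapping} together with Proposition \ref{prop:inadmissible_shocks} (no admissible shock has $c^+>c^-$), forces the orientation $\zeta^- = c^+ \leqslant c^- = \zeta^+$. I would then verify $[\mathcal A(\zeta,k)] \leqslant w[|\zeta-k|]$ by cases on the location of $k$. When $k\notin(\zeta^-,\zeta^+)$ the two signs $\sign(\zeta^\pm - k)$ coincide and the inequality collapses to the Rankine--Hugoniot \emph{equality}. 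When $\zeta^- < k < \zeta^+$, substituting $w = [a(\zeta)]/[\zeta]$ reduces the claim to
\[
a(k) \geqslant a(\zeta^-) + w\,(k - \zeta^-),
\]
i.e. to the graph of $a$ lying above its chord on $[\zeta^-,\zeta^+]$, which is precisely the concavity (A3). (The reversed orientation $\zeta^+ < \zeta^-$ would flip this into the chord-above-graph inequality and violate it, confirming that concavity is exactly what selects the admissible orientation.)

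To globalize, I would take $\psi \in \mathcal D^+(Q_{lagr})$ and integrate the entropy relation over $Q_{lagr}$ cut along its locally finite shock curves. In every smoothness subregion $\zeta$ is a classical solution, so there $\partial_x|\zeta - k| + \partial_\varphi \mathcal A(\zeta,k) = 0$; integrating by parts transfers the derivatives onto $\psi$, producing the interior term $\iint_{Q_{lagr}}(|\zeta-k|\psi_x + \mathcal A(\zeta,k)\psi_\varphi)$ and leaving line integrals along the shocks and along $\partial Q_{lagr}$. Each internal shock contributes a line integral that is nonnegative by the pointwise inequality above, while the three boundary pieces of $\partial Q_{lagr}$ — the curve $(\varphi_0(x),x)$ for $0\leqslant x\leqslant x^0$, its horizontal image $\{\varphi = \varphi_0(x^0)\}$ for $x>x^0$, and the axis $\{x=0\}$ — reproduce exactly the three boundary integrals in the statement. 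For the first of these the slope relation $\varphi_0'(x) = -s_0^x(x)$ from \eqref{eq:def_varphi_0} converts the $d\varphi$-part of the line integral into the factor $s_0^x(x)$ multiplying $|\zeta-k|$, yielding the combined integrand $|\zeta-k|\,s_0^x + \mathcal A(\zeta,k)$.

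The genuinely substantive point is the pointwise step, and there the only content beyond Rankine--Hugoniot is the chord characterization of concavity. The remaining difficulty is purely bookkeeping: orienting each of the three boundary pieces of $Q_{lagr}$ consistently and tracking the factor $\varphi_0'(x) = -s_0^x(x)$, so that the assorted line integrals assemble into precisely the boundary terms displayed. This is where one must be most careful, but no new idea is required beyond the scalar Kru\v{z}kov scheme already used for the $\mathcal U$-shocks in Lemma \ref{lemma-U-entropy-int-ineq}.
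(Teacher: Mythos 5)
Your proposal is correct and takes essentially the same route as the paper, which simply quotes Proposition 5.8 of \cite{MR2024} on the grounds that the $\zeta$-equation $\zeta_x + a(\zeta)_\varphi = 0$ is a decoupled scalar law with concave flux, so the standard Oleinik--Kru\v{z}kov argument applies unchanged: the admissible orientation $\zeta^- \leqslant \zeta^+$ via the mapping \eqref{shock_mapping} and Proposition \ref{prop:inadmissible_shocks}, the reduction for $\zeta^- < k < \zeta^+$ to the chord inequality $a(k) \geqslant a(\zeta^-) + w(k-\zeta^-)$ guaranteed by (A3), and the integration by parts with $\varphi_0'(x) = -s_0^x(x)$ assembling the boundary terms. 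Your case analysis and bookkeeping match the intended proof in all details.
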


The first and only significant change to the uniqueness proof comes when we look at the proof of \cite[Lemma 9]{MR2024}. It relies on the admissible shocks' classification, and needs to be reworked based solely on Lemma \ref{lemma2}.

\begin{lemma}
\label{lemma_zeta_shock_entropy_condition}
Given two solutions $(\mathcal U, \zeta)$ and $(\mathcal V, \zeta)$ for the same $\zeta$, we have the following entropy inequality on any $\zeta$-shock given by the curve $(\Phi(x), x)$ inside $Q_{lagr}$:
\begin{equation}\label{eq:entropy-ineq-zeta-shock}
[\mathcal G(\mathcal U, \mathcal V)] \leqslant \dfrac{d\Phi}{dx} [|\mathcal U - \mathcal V|].
\end{equation}
\end{lemma}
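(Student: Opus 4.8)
The goal is to establish the entropy inequality \eqref{eq:entropy-ineq-zeta-shock} across a $\zeta$-shock, where the jump $[\mathcal G(\mathcal U,\mathcal V)]$ of the entropy flux is controlled by the shock speed $d\Phi/dx$ times the jump $[|\mathcal U-\mathcal V|]$ of the entropy. Since we are on a $\zeta$-shock, both $\mathcal U$ and $\mathcal V$ may jump, and the values $\zeta^\pm$ are common to both solutions. My plan is to translate the desired inequality back into original coordinates via the shock mapping \eqref{shock_mapping}, where it becomes a statement comparing two admissible $c$-shocks sharing the same endpoint concentrations $c^\pm$ (with $c^+<c^-$ by Proposition~\ref{prop:inadmissible_shocks}), and then invoke Lemma~\ref{lemma2} to rule out the configuration that would violate the inequality.

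First I would expand the entropy--entropy-flux pair. Writing $\sigma=\sign(\mathcal U-\mathcal V)$ and using $\mathcal G(\mathcal U,\mathcal V)=(\mathcal F(\mathcal U,\zeta)-\mathcal F(\mathcal V,\zeta))\sigma$, I would reduce \eqref{eq:entropy-ineq-zeta-shock} to a claim about the four one-sided values $\mathcal U^\pm,\mathcal V^\pm$ together with the two Rankine--Hugoniot relations \eqref{eq:RH-2} for the $\mathcal U$-equation applied to each solution, namely $v^*[\mathcal U]=[\mathcal F(\mathcal U,\zeta)]$ and $v^*[\mathcal V]=[\mathcal F(\mathcal V,\zeta)]$ with the \emph{same} speed $v^*=d\Phi/dx$ (the $\zeta$-shock speed is determined by the second equation in \eqref{eq:RH-2} alone). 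Substituting these, the inequality becomes a sign condition on $v^*$ relative to the ordering of $\mathcal U^\pm$ and $\mathcal V^\pm$, so the content is really: the two solutions cannot ``cross'' in an orientation-reversing way across the shock.

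The decisive step is to transfer this crossing condition to original coordinates. Under \eqref{shock_mapping} the $\zeta$-shock corresponds to a pair of $c$-shocks in $(s,c)$-coordinates from $c^-$ to $c^+$: one from $(s^-,c^-)$ to $(s^+,c^+)$ with some speed $v$, the other from $(z^-,c^-)$ to $(z^+,c^+)$ with speed $w$, built from the two solutions $\mathcal U$ and $\mathcal V$ respectively. Because $\theta_c=1/f(\cdot,c)$ is monotone in $s$, the ordering of $\mathcal U$ values inverts the ordering of $s$ values, and I would carefully bookkeep how $v^*$ in Lagrange coordinates relates to the original speeds $v,w$ (both positive by Proposition~\ref{prop:inadmissible_shocks}). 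The violating scenario for \eqref{eq:entropy-ineq-zeta-shock} then translates exactly into the forbidden configuration $s^->z^-$, $s^+<z^+$, $v<w$ of the hypothesis \eqref{eq:lemma2_assumption}; by Lemma~\ref{lemma2} such a pair of shocks cannot both be admissible, contradicting that $\mathcal U$ and $\mathcal V$ are genuine W-solutions.

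The main obstacle I anticipate is the orientation bookkeeping in this translation: getting the inequality directions right through the inversion of $s\mapsto\mathcal U$, the swap of $\pm$ labels in \eqref{shock_mapping}, and the relationship between the Lagrange speed $v^*$ and the two original speeds $v,w$. One must confirm that the case analysis on $\sign(\mathcal U-\mathcal V)$ on each side of the shock lines up with the strict inequalities in \eqref{eq:lemma2_assumption}, and handle the boundary cases where $\mathcal U^\pm=\mathcal V^\pm$ on one side (so $\sigma$ is locally constant and the inequality holds trivially from the single Rankine--Hugoniot relation). Once the forbidden geometric configuration is correctly identified with \eqref{eq:lemma2_assumption}, Lemma~\ref{lemma2} closes the argument immediately; the rest is careful sign-chasing rather than any new analytic estimate.
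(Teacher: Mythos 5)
Your route is the paper's route in outline: map the two $\zeta$-shocks to original coordinates via \eqref{shock_mapping}, dispose of the non-crossing sign patterns by the Rankine--Hugoniot relations \eqref{eq:RH-2} (both shocks share the same Lagrange speed $d\Phi/dx$, so when $\sign(\mathcal U^+-\mathcal V^+)=\sign(\mathcal U^--\mathcal V^-)$ one even gets equality in \eqref{eq:entropy-ineq-zeta-shock}), and use Lemma~\ref{lemma2} on a crossing configuration. But there is a genuine gap in your central claim that ``the violating scenario translates exactly into the forbidden configuration \eqref{eq:lemma2_assumption}'' and that ``Lemma~\ref{lemma2} closes the argument immediately; the rest is careful sign-chasing rather than any new analytic estimate.'' Taking WLOG $v<w$, there are \emph{two} crossing orientations, and Lemma~\ref{lemma2} excludes only one of them ($s^->z^-$, $s^+<z^+$). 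The other, $s^-<z^-$, $s^+>z^+$, is perfectly admissible and genuinely occurs --- the two solutions \emph{can} cross in an orientation-reversing way --- and for it \eqref{eq:entropy-ineq-zeta-shock} is a \emph{strict} inequality that does not follow from RH bookkeeping. Concretely, subtracting the two RH relations only yields the identity $P=M$, where $P=\mathcal F(\mathcal U^+,\zeta^+)-\mathcal F(\mathcal V^+,\zeta^+)-\frac{d\Phi}{dx}(\mathcal U^+-\mathcal V^+)$ and $M$ is the analogous quantity on the minus side, while the desired inequality in this orientation reads $P+M\leqslant 0$; nothing in RH determines the sign of $P$. This is precisely where the paper inserts its one substantive estimate: the Lagrange RH lines of the two shocks are parallel with slope $d\Phi/dx>0$ (by \eqref{eq:zeta-shock-velocity-positive}), and the $\mathcal U$-line lies strictly below the $\mathcal V$-line because $v<w$ (Fig.~\ref{fig:two_nullclines_hodograph}), which forces both chord slopes to compare with $d\Phi/dx$ in the right direction and hence $P<0$.

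The gap is fillable, and in fact by the bookkeeping you flagged (the relation between $d\Phi/dx$ and $v,w$): one has $d\Phi/dx=\frac{a(\zeta^+)-a(\zeta^-)}{\zeta^+-\zeta^-}=h$, so the intercept of the $\mathcal U$-shock's Lagrange RH line is $\mathcal F(\mathcal U^+,\zeta^+)-h\,\mathcal U^+=-\frac{s^-+h}{f(s^-,c^-)}=-\frac1v$, and similarly $-\frac1w$ for the $\mathcal V$-shock, whence $P=\frac1w-\frac1v<0$ exactly when $0<v<w$. Only after this computation (or the paper's equivalent geometric argument) does failure of \eqref{eq:entropy-ineq-zeta-shock} force, upon relabeling the two shocks, the configuration \eqref{eq:lemma2_assumption}; that equivalence is the content of the paper's fourth case, not a consequence of Lemma~\ref{lemma2}, and your proposal as written leaves it unproved.
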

\begin{proof}
Let the two solutions have the $\zeta$-shocks, that, when translated into the original coordinates via the mapping \eqref{shock_mapping}, connect 
\begin{align*}
&(s^- = \vartheta_{\zeta^+}(\mathcal{U}^+), c^- = \zeta^+) \text{ to } (s^+ = \vartheta_{\zeta^-}(\mathcal{U}^-), c^+ = \zeta^-) \text{ with speed } v, \text{ and }
\\ 
&(z^- = \vartheta_{\zeta^+}(\mathcal{V}^+), c^- = \zeta^+) \text{ to } (z^+ = \vartheta_{\zeta^-}(\mathcal{V}^-), c^+ = \zeta^-) \text{ with speed } w.
\end{align*}

First, we consider the case $v=w$. In this case we have equality in \eqref{eq:entropy-ineq-zeta-shock} due to the Rankine--Hugoniot condition (see \eqref{eq:RH-2}):
\begin{equation}
\label{Lagrangian_Rankine_Hugoniot}
\dfrac{d\Phi}{dx} = \dfrac{[\mathcal F(\mathcal U, \zeta)]}{[\mathcal U]} = \dfrac{[\mathcal F(\mathcal V, \zeta)]}{[\mathcal V]},   
\end{equation}
since all points lay on the same line in Figure \ref{fig:two_nullclines_hodograph}, and therefore, we additionally have
\[
\dfrac{d\Phi}{dx} = \dfrac{\mathcal F(\mathcal U^\pm, \zeta^\pm) - \mathcal F(\mathcal V^\pm, \zeta^\pm)}{\mathcal U^\pm - \mathcal V^\pm}.
\]

Next, we can assume without loss of generality that $v<w$. Then the relations between $s^\pm$ and $z^\pm$ fall into one of four cases:

\begin{itemize}
    \item $s^- > z^-$, $s^+ < z^+$. This gives the assumption \eqref{eq:lemma2_assumption} for our shocks, thus due to Lemma \ref{lemma2} one of the considered shocks must be inadmissible. Therefore, we don't need to consider this case.
    \item $s^- > z^-$, $s^+ > z^+$ or $s^- < z^-$, $s^+ < z^+$. In these cases we have 
    \[
    \sign(\mathcal U^+ - \mathcal V^+) = \sign(\mathcal U^- - \mathcal V^-),
    \]
    therefore we have equality in \eqref{eq:entropy-ineq-zeta-shock} due to Rankine--Hugoniot condition \eqref{Lagrangian_Rankine_Hugoniot}.
    \item $s^- < z^-$, $s^+ > z^+$. In this case 
    \begin{align}
    \begin{split}
    \label{eq:known_signs}
    \sign(\mathcal U^+ - \mathcal V^+) = \sign(\theta_{c^-}(s^-) - \theta_{c^-}(z^-)) = 1, \\ \quad \sign(\mathcal U^- - \mathcal V^-)= \sign(\theta_{c^+}(s^+) - \theta_{c^+}(z^+)) = -1,
    \end{split}
    \end{align}
    so \eqref{eq:entropy-ineq-zeta-shock} transforms into
    \[
    \mathcal F(\mathcal U^+, \zeta^+) - \mathcal F(\mathcal V^+, \zeta^+) + \mathcal F(\mathcal U^-, \zeta^-) - \mathcal F(\mathcal V^-, \zeta^-) \leqslant \dfrac{d\Phi}{dx} (\mathcal U^+ - \mathcal V^+ + \mathcal U^- - \mathcal V^-).
    \]
    Note that due to the monotonicity of $a$ and the Rankine-Hugoniot condition \eqref{eq:RH-2} the speed of the shock
    \begin{equation}
    \label{eq:zeta-shock-velocity-positive}
    \dfrac{d\Phi}{dx} = \dfrac{a(\zeta^-)-a(\zeta^+)}{\zeta^--\zeta^+} > 0.
    \end{equation}
    It could be observed geometrically on Fig.~\ref{fig:two_nullclines_hodograph} that
    \begin{align*}
    \mathcal U^+ = \theta_{c^-}(s^-) > \theta_{c^-}(z^-) = \mathcal V^+, \\
    \mathcal U^- = \theta_{c^+}(s^+) < \theta_{c^+}(z^+) = \mathcal V^-,
    \end{align*}
    therefore, the following inequalities hold for the inclines of lines:
    \[
    \dfrac{\mathcal F(\mathcal U^+, \zeta^+) - \mathcal F(\mathcal V^+, \zeta^+)}{\mathcal U^+ - \mathcal V^+} < \dfrac{d\Phi}{dx},
    \]
    \[
    \dfrac{\mathcal F(\mathcal U^-, \zeta^-) - \mathcal F(\mathcal V^-, \zeta^-)}{\mathcal U^- - \mathcal V^-} > \dfrac{d\Phi}{dx}.
    \]
    This Fig.~\ref{fig:two_nullclines_hodograph} is constructed for the S-shaped case, but the geometrical argument holds even without that assumption.
    \begin{figure}[t!]
    \begin{center}
    \includegraphics[trim={0.1cm 0.1cm 0.1cm 0.1cm}, clip, width=0.8\linewidth]{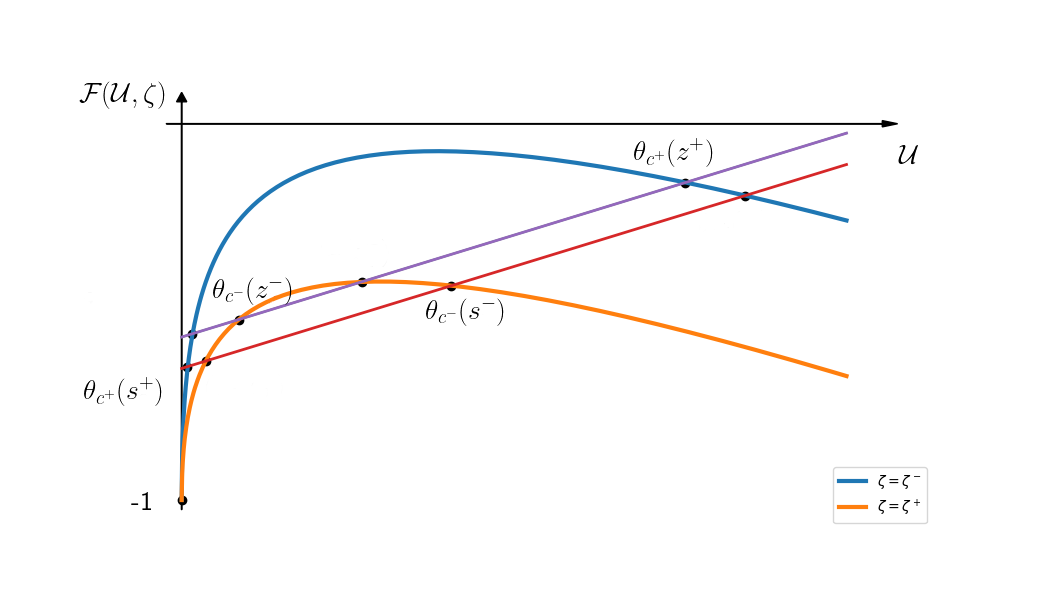}
    \end{center}
    \caption{An illustration for the positions of $\mathcal U^\pm$ and $\mathcal V^\pm$ on the plots of $\mathcal F(\cdot, \zeta^\pm)$} in the last case of Lemma \ref{lemma_zeta_shock_entropy_condition}. The red line corresponding to $s^\pm$ is lower due to $v<w$.
    \label{fig:two_nullclines_hodograph}
\end{figure}
    Taking into account the known signs \eqref{eq:known_signs} of the denominators, we obtain
     \[
    \mathcal F(\mathcal U^+, \zeta^+) - \mathcal F(\mathcal V^+, \zeta^+) < \dfrac{d\Phi}{dx} (\mathcal U^+ - \mathcal V^+),
    \]
    \[
    \mathcal F(\mathcal U^-, \zeta^-) - \mathcal F(\mathcal V^-, \zeta^-) < \dfrac{d\Phi}{dx} (\mathcal U^- - \mathcal V^-).
    \]
    Taking the sum of these inequalities concludes this case.
\end{itemize}
Thus, all four cases lead either to contradiction or the necessary inequality, which concludes the proof.
\end{proof}

Thus, with \cite[Lemma 9]{MR2024} successfully replaced, we can proceed with the original proof with no other changes. The following lemma follows from Proposition~\ref{proposition_entropy_solution_inequality} and the new Lemma~\ref{lemma_zeta_shock_entropy_condition} we just proved as a replacement for \cite[Lemma 9]{MR2024}.

\begin{lemma}[Lemma 5.11, \cite{MR2024}]
\label{lemma_last}
The inequality \eqref{entropy_solutions_inequality} from Proposition \ref{proposition_entropy_solution_inequality} holds for all positive test functions $\psi \in \mathcal D^+(Q_{lagr})$ without restrictions on their supports (including $\zeta$-shocks).

\end{lemma}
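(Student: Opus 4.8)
The plan is to reduce the general statement to Proposition~\ref{proposition_entropy_solution_inequality} by excising thin neighbourhoods of the $\zeta$-shocks with a cutoff and letting the cutoff width tend to zero; the only surviving new contribution will be a line integral along the $\zeta$-shocks whose sign is dictated by the freshly proved Lemma~\ref{lemma_zeta_shock_entropy_condition}. Write $\Lambda[\psi]$ for the entire right-hand side of \eqref{entropy_solutions_inequality} (the double integral plus all the initial--boundary integrals); the claim is $\Lambda[\psi]\ge 0$ for every $\psi\in\mathcal D^+(Q_{lagr})$. Since $\zeta$ is common to both solutions, their $\zeta$-shocks coincide and form a locally finite family of $\mathcal C^1$ curves $(\Phi_j(x),x)$; let $\Gamma$ be their union. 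For small $\delta>0$ I would take a smooth cutoff $\chi_\delta$ equal to $0$ within distance $\delta/2$ of $\Gamma$ and equal to $1$ beyond distance $\delta$, with $|\nabla\chi_\delta|\lesssim 1/\delta$. Then $\supp(\psi\chi_\delta)$ contains no $\zeta$-shock, so Proposition~\ref{proposition_entropy_solution_inequality} applies and yields $\Lambda[\psi\chi_\delta]\ge 0$ for every $\delta>0$.

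The next step is to pass to the limit $\delta\to 0$. Writing $\partial(\psi\chi_\delta)=\chi_\delta\,\partial\psi+\psi\,\partial\chi_\delta$ splits the double integral into two pieces. The piece carrying $\chi_\delta\,\partial\psi$ converges to the double integral in \eqref{entropy_solutions_inequality} by dominated convergence ($\chi_\delta\to 1$ a.e.), and since $\Gamma$ meets the boundary objects $\{\varphi=0\}$ and $(\varphi_0(x),x)$ only on a set of one-dimensional measure zero, the initial--boundary integrals of $\Lambda[\psi\chi_\delta]$ likewise converge to those of $\Lambda[\psi]$. Setting $Q=|\mathcal U-\mathcal V|$ and $H=\mathcal G(\mathcal U,\mathcal V)$, the remaining piece carries $\psi\,\partial\chi_\delta$ and is supported in the shrinking collar around $\Gamma$. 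Parametrising the collar by $\rho=\varphi-\Phi_j(x)$ and writing $\chi_\delta=\beta(\rho/\delta)$ with $\beta(0)=0$, $\beta(\pm\infty)=1$, one computes $H\,\partial_\varphi\chi_\delta+Q\,\partial_x\chi_\delta=\delta^{-1}\beta'(\rho/\delta)\,(H-\tfrac{d\Phi_j}{dx}Q)$, so the mollified normal derivative concentrates into a difference of one-sided traces and, using that $\psi$ is continuous across $\Gamma$, this piece tends to
\[
J:=\sum_j\int \psi\Big([\mathcal G(\mathcal U,\mathcal V)]-\frac{d\Phi_j}{dx}\,[|\mathcal U-\mathcal V|]\Big)\,dx.
\]

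By Lemma~\ref{lemma_zeta_shock_entropy_condition}, that is inequality \eqref{eq:entropy-ineq-zeta-shock}, the integrand $[\mathcal G(\mathcal U,\mathcal V)]-\frac{d\Phi_j}{dx}[|\mathcal U-\mathcal V|]$ is nonpositive on every $\zeta$-shock, and $\psi\ge 0$, hence $J\le 0$. Collecting the three limits gives $0\le\lim_{\delta\to 0}\Lambda[\psi\chi_\delta]=\Lambda[\psi]+J$, whence $\Lambda[\psi]\ge -J\ge 0$, which is precisely \eqref{entropy_solutions_inequality} for arbitrary $\psi$. The main obstacle is the middle step: rigorously identifying the limit of $\psi\,\partial\chi_\delta$ as the one-sided jump $[\,\cdot\,]$ across each $\zeta$-shock. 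This requires choosing the collar coordinate so that the tangential part of $\nabla\chi_\delta$ gives a vanishing contribution while the transversal part reproduces $H-\tfrac{d\Phi_j}{dx}Q$, and verifying that the finitely many points where $\mathcal U$-shocks or the problem boundary meet $\Gamma$ contribute nothing in the limit. Once the limit $J$ is identified, the favourable sign is supplied immediately by Lemma~\ref{lemma_zeta_shock_entropy_condition}, and the proof closes.
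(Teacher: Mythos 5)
Your proposal is correct and follows essentially the same route as the paper, which cites the proof of \cite[Lemma 5.11]{MR2024}: excise thin collars around the $\zeta$-shocks, apply Proposition~\ref{proposition_entropy_solution_inequality} to $\psi\chi_\delta$, and identify the concentrated limit term $\sum_j\int\psi\bigl([\mathcal G(\mathcal U,\mathcal V)]-\tfrac{d\Phi_j}{dx}[|\mathcal U-\mathcal V|]\bigr)\,dx$, whose nonpositive sign is exactly what Lemma~\ref{lemma_zeta_shock_entropy_condition} supplies. The sign bookkeeping with the convention $[q]=q^+-q^-$ is consistent, and the technical points you flag (tangential part of $\nabla\chi_\delta$, isolated intersections with $\mathcal U$-shocks and the boundary) are handled the standard way there as well.
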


For more details see the proof of \cite[Theorem 6.1]{MR2024}. It utilizes the result of Lemma~\ref{lemma_last} and the classical scheme of Kru\v{z}kov's theorem to prove that the difference between $\mathcal U$ and $\mathcal V$ cannot increase, thus solutions with the same initial-boundary conditions must coincide.

\section{Application to Riemann problem}
\label{sec:Riemann-problem}

In this Section we describe the solutions to the Riemann problem~\eqref{eq:main_system_chem_flood},~\eqref{eq:Riemann-problem} for 
a simple case of an S-shaped $f$ changing monotonicity with respect to $c$ exactly once. For a study of the sufficient conditions for an S-shaped flow function see \cite{Castaneda, RastS-Shaped}.
 
We impose, in addition to (F1)--(F2) from Section~\ref{sec:restrictions}, the further assumptions (F3)--(F4) on the fractional-flow function $f$:
\begin{enumerate}
    \item[(F3)] $f$ is $S$-shaped in $s$: for each $c \in [0,1]$ function $f(\cdot,c)$ has a unique point of inflection $s^I =  s^I(c) \in (0, 1)$, such that $f_{ss}(s, c)>0$ for $0<s<s^I$ and $f_{ss}(s, c)<0$ for $s^I<s<1$. 
    \item[(F4)] $f$ has exactly one change of monotonicity in $c$: $\exists c^*\in(0,1)$: 
    \begin{itemize}
        \item $f_c(s, c)<0$ for $0<s<1$, $0 < c < c^*$;
        \item $f_c(s, c)>0$ for $0<s<1$, $c^* < c < 1$.
    \end{itemize}
We assume that $f_{cc}(s^*,c^*)>0$, where $s^*\in[0,1]$ is a unique value that satisfies
     \begin{align*}
         f_s(s^*,c^*)=\frac{f(s^*,c^*)}{s^*+a_c(c^*)}.
     \end{align*}
\end{enumerate}
 
The main result of the Section states as follows.
\begin{theorem}
\label{thm2}
Consider Problem~\eqref{eq:main_system_chem_flood} under the assumptions (F1)--(F4) for the fractional flow function $f$ and (A1)--(A3) for adsorption function~$a$. Then for arbitrary states $u_L=(s_L,c_L)$, $u_R=(s_R,c_R)\in[0,1]^2$, $s_L\neq 0$ there exists a unique \mbox{$W$-solution} of the Riemann problem~\eqref{eq:Riemann-problem} (see Definition~\ref{def:solution}).
\end{theorem}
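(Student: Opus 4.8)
My plan is to separate the two assertions. \emph{Uniqueness} is almost immediate from the general Theorem~\ref{thm:1}, while \emph{existence} is the substantive part and is obtained by an explicit construction of the self-similar wave fan.

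For uniqueness I would first verify the hypotheses of Theorem~\ref{thm:1}. The S-shaped assumption (F3) implies (F3'): since $f\in\mathcal C^2([0,1]^2)$ the inflection locus $s^I(c)$ is continuous on the compact interval $[0,1]$ with values in $(0,1)$, so $s^f:=\min_{c\in[0,1]}s^I(c)>0$ and $f_{ss}(s,c)>0$ for all $s\in(0,s^f)$, $c\in[0,1]$. The constant-injection data equivalent to the Riemann problem~\eqref{eq:Riemann-problem} satisfy (S1)--(S3): one takes $x^0=+\infty$ when $s_R>0$ and $x^0=0$ when $s_R=0$, which gives (S1)--(S2), while the hypothesis $s_L\neq0$ furnishes (S3) with $\delta^0=s_L$. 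Theorem~\ref{thm:1} then applies without change and rules out two distinct $W$-solutions with these data. It therefore remains only to produce one $W$-solution for every admissible pair $(u_L,u_R)$.

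For existence I would build the solution in the self-similar variable $\xi=x/t$ by the elementary-wave method, organised around the two characteristic fields of~\eqref{eq:main_system_chem_flood}. Writing the second equation in the reduced form $(s+a_c(c))\,c_t+f(s,c)\,c_x=0$, the two characteristic speeds are $\lambda_s=f_s(s,c)$ (the $s$-field, along which $c$ is constant and $s$ performs a Buckley--Leverett wave) and $\lambda_c=f(s,c)/(s+a_c(c))$ (the $c$-field, which transports the jumps of $c$). Along the $s$-field at a frozen concentration, admissible waves are given by Oleinik's convex/concave-hull construction applied to the S-shaped profile $f(\cdot,c)$, with the behaviour near $s=0$ controlled by Lemma~\ref{lemma:Lax_for_small_s}; along the $c$-field the admissible $c$-shocks are restricted by Proposition~\ref{prop:inadmissible_shocks} (in particular $c^+\le c^-$ and $v>0$) and selected by the travelling-wave phase portrait of~\eqref{eq:dyn_sys_cap_diff}, whose geometry together with the Rankine--Hugoniot relations~\eqref{eq:RH-1} and the classification of \cite{Bahetal} pins down the admissible $c$-wave curves. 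The construction then concatenates, from $u_L$ to $u_R$, an appropriate sequence of $c$- and $s$-waves separated by constant states, and one checks that the resulting speeds are non-decreasing across the fan, so that the juxtaposition is a genuine weak solution.

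The main obstacle is that the hypotheses (F3)--(F4) make the system neither strictly hyperbolic nor monotone in $c$: the two speeds coincide on the transition locus $f_s=f/(s+a_c)$, which passes through $(s^*,c^*)$ by the defining relation in (F4), and the monotonicity of $f$ in $c$ reverses at $c^*$. Consequently, when $c_L$ and $c_R$ lie on opposite sides of $c^*$, a single $c$-wave need neither be admissible nor fit monotonically between the adjacent $s$-waves, and one is forced into composite fans with $s$-waves interleaved between several $c$-waves --- this is the origin of the novel $cscsc$ structure recorded in Section~\ref{sec:Riemann-problem}. Resolving this demands a finite but careful case analysis over the relative positions of $(s_L,c_L)$ and $(s_R,c_R)$: for each region one selects the wave sequence, verifies the vanishing-viscosity admissibility (W4) of every shock through the criteria of Section~\ref{sec:admissibility} and \cite{Bahetal}, and verifies the monotone ordering of the fan speeds. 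The transversality condition $f_{cc}(s^*,c^*)>0$ in (F4) guarantees that the tangency at $(s^*,c^*)$ is non-degenerate, keeping the case list finite and the selection unambiguous. Since uniqueness is already established, it suffices to exhibit one admissible fan in each region and to confirm that the regions exhaust $\{(u_L,u_R)\in[0,1]^2\times[0,1]^2:s_L\neq0\}$, which completes the proof.
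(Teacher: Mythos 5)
Your proposal follows essentially the same route as the paper: uniqueness is delegated to Theorem~\ref{thm:1} (after observing that (F3) implies (F3') and recasting the Riemann data as constant-injection initial-boundary data satisfying (S1)--(S3)), while existence is obtained by the same explicit region-by-region construction of self-similar fans of $s$- and $c$-waves, with $c$-shocks selected by the travelling-wave criterion of \cite{Bahetal} when $c_L>c_R$ and composite rarefaction structures (including the novel $cscsc$ fan) when $c_L<c^*<c_R$. One trivial correction: for $s_R>0$ the constant $\delta^0$ must serve (S2) as well as (S3), so take $\delta^0=\min(s_L,s_R)$ rather than $s_L$.
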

The existence will be proven by explicit construction, and the uniqueness is guaranteed by applying Theorem~\ref{thm:1}. For $s_L=0$ we constructed the solution (though it satisfies the vanishing viscosity criteria only when $c_L=c_R$), but the uniqueness is not formally covered by Theorem~\ref{thm:1} in its current form (see Remark~\ref{remark:c_L_0}). We hope to cover this case by some future generalization of the uniqueness theorem. First, in Section~\ref{subsec:conslaw-background} we recall important properties of hyperbolic conservation laws in general
and of the chemical flooding models in particular; then in Section~\ref{subsec:Riemann-solutions} we provide explicit solutions of the Riemann problem.

\subsection{Basic facts about conservation laws and chemical flooding models}
\label{subsec:conslaw-background}

In this section we recall the properties of hyperbolic conservation laws --- and, specifically, of the chemical flooding model~\eqref{eq:main_system_chem_flood} --- that are pertinent to the analysis of the Riemann problem.

\subsubsection{Characteristic speeds}
The system~\eqref{eq:main_system_chem_flood}
can be rewritten in the form
\begin{align*}
    u_t+A(u)u_x=0, \qquad u=(s,c),
\end{align*}
where $A(u)$ is the characteristic matrix
\begin{equation}
A(s,c) = 
\begin{pmatrix}
f_s & f_c \\
0 & f/(s+a_c(c))
\label{eq:characteristic-matrix}
\end{pmatrix}.
\end{equation}
For more details see~\cite[Section 2]{JnW}. The eigenvalues of $A(u)$,
\emph{i.e.},\ the characteristic speeds for the system,
are
\begin{equation}
\label{eq:lambda}
\lambda^s = f_s
\qquad \text{and} \qquad
\lambda^c = f/(s+a_c(c)).
\end{equation}
We choose the right eigenvectors
corresponding to $\lambda^s$ and $\lambda^c$ to be
\begin{equation}
\label{eq:eigenvectors}
r^s := \begin{pmatrix} 1 \\ 0 \end{pmatrix}
\qquad \text{and} \qquad
r^c := \begin{pmatrix} -f_c \\
\lambda^s - \lambda^c \end{pmatrix}.
\end{equation}
As both characteristic speeds are real, this model is hyperbolic.
However,
the characteristic speeds coincide,
not only on the boundary line $s = 0$,
but also the curve
\begin{align}
\label{eq:coincidence-alpha}
\mathcal{C}
:= \{\, (s,c)\in[0,1]^2 \,:\, \text{$\lambda^s(s,c) = \lambda^c(s,c)$ and $s \ne 0$} \,\},
\end{align}
thus the model is not strictly hyperbolic. The coincidence locus $\mathcal{C}$ divides the domain into two regions: left $\Omega_L$, where $\lambda^{s}>\lambda^{c}$, and right $\Omega_R$, where $\lambda^{s}<\lambda^{c}$.
\subsubsection{Rarefaction waves}

Let \(\lambda\) be an eigenvalue of the characteristic matrix given by~\eqref{eq:characteristic-matrix} with corresponding eigenvector~$r$.  
The simple rarefaction waves are continuous solutions of~\eqref{eq:main_system_chem_flood} of the form
\[
u(x,t) = v\!\left(\xi\right),\quad \xi=\frac{x}{t},
\]
where \(v\) corresponds to an integral curve of the vector field \(r\). More precisely,
\begin{equation}
\label{eq:rare}
u(x,t) =
\begin{cases}
u_L, & \text{if } x/t < \lambda(u_L), \\[0.3em]
v, & \text{if } x/t = \lambda(v), \\[0.3em]
u_R, & \text{if } x/t > \lambda(u_R),
\end{cases}
\end{equation}
where \(v\) is an integral curve of the vector field \(r\) connecting the states \(u_L\) and \(u_R\) with the additional property that the eigenvalue \(\lambda\) is increasing from \(u_L\) to \(u_R\).

Since the matrix \(A\) has two eigenvalues, \(\lambda^s\) and \(\lambda^c\), there are two possible rarefaction curves going through any given state \(u_L\):
\begin{itemize}
    \item  
If \(\lambda=\lambda^s\) with eigenvector \(r^s=(1,0)\), then \(c\) is constant along the integral curves. Thus, a simple rarefaction of the form~\eqref{eq:rare} exists whenever \(c_L=c_R\) and \(\lambda^s=f_s(s,c_L)\) increases from \(s_L\) to \(s_R\). This is precisely the Buckley--Leverett rarefaction for \(f(s)=f(s,c_L)\), hereafter called an \emph{s-rarefaction wave}.
\item If \(\lambda=\lambda^c\) with eigenvector \(r^c\) defined in~\eqref{eq:eigenvectors}, the integral curves are nontrivial in the \((s,c)\)-plane (see Fig.~\ref{fig:rarefactions}) and correspond to solutions of the following dynamical system (we call such curves \emph{$c$-rarefaction curves}):
\begin{align}
\label{eq:rare-dyn-sys}
    \begin{pmatrix}
    s\\
    c
    \end{pmatrix}_\xi=
    \begin{pmatrix} 
    -f_c \\ 
    \lambda^{s} - \lambda^{c}
    \end{pmatrix}.
\end{align}
A simple rarefaction of the form~\eqref{eq:rare} exists when \(\lambda^c\) increases along the curve connecting \(u_L\) and \(u_R\). We refer to such solutions as \emph{c-rarefaction waves}. 

\begin{remark}
\label{rmk:rare-s=0-s=1}
  We regard the constant states \(s \equiv 0\) and \(s \equiv 1\) as two \(c\)-rarefaction curves. Indeed, when $s_L=0$ or $s_L=1$, then $f_c(u_L)=0$, and the eigenvector $r^c$ can be taken to be $(0,1)$. For more details see discussion at the end of Section 3 in~\cite{JnW}.

\end{remark}
\begin{figure}[H]
    \centering
    \includegraphics[width=0.55\linewidth]{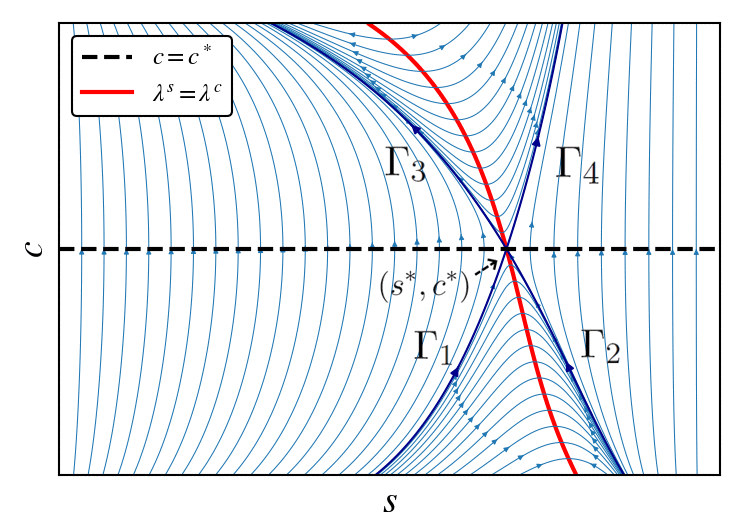}
    \caption{The blue curves represent the family of \(c\)-rarefaction curves --- integral curves of the system~\eqref{eq:rare-dyn-sys} associated with \(c\)-rarefaction waves. The critical point $(s^*,c^*)$ is of saddle-type. The arrows show the direction of increasing eigenvalue $\lambda^c$. The red curve represents the coincidence locus $\mathcal{C}$.}
    \label{fig:rarefactions}
\end{figure}
\end{itemize}
The following proposition collects the basic properties of $c$-rarefaction curves in $(s,c)$-plane that we will use to construct Riemann problem solutions.

\begin{proposition}
    \label{prop:rare}
    For the $c$-rarefaction curves the following properties are fulfilled:

    \begin{itemize}
        \item For any $(s_0,c_0)\in\Omega_L\cup\Omega_R$ the $c$-rarefaction curve $\Gamma$ that passes through the point $(s_0,c_0)$ can be written as a function $s=s(c)$ and
        \begin{align*}
            \frac{d}{dc}\lambda^{c}(s(c),c)> 0.
        \end{align*}
        \item The system~\eqref{eq:rare-dyn-sys} admits a unique fixed point of saddle type, $(s^*,c^*)$, defined by the assumption~(F4), with four $c$-rarefaction curves, 
        $\Gamma_i, i=1,\ldots,4,$ intersecting at
        this point. We adopt the following notation for the parametrizations of $\Gamma_i, i=1,\ldots,4,$ 
        (see Fig.~\ref{fig:rarefactions}):
         \begin{align*}
             s_1(c), c\in[0,c^*],\text{ such that }(s_1(c),c)\in\Omega_L\cup \mathcal{C};
             \\
             s_2(c), c\in[0,c^*],\text{ such that }(s_2(c),c)\in\Omega_R\cup \mathcal{C};
             \\
             s_3(c), c\in[c^*,1],\text{ such that }(s_3(c),c)\in\Omega_L\cup \mathcal{C};
             \\
             s_4(c), c\in[c^*,1],\text{ such that }(s_4(c),c)\in\Omega_R\cup \mathcal{C}.
         \end{align*}
    \end{itemize}

\end{proposition}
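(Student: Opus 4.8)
The plan is to treat the two bullet points separately: the graph-and-monotonicity claim reduces to a direct computation, while the fixed-point claim is a phase-plane analysis of the rarefaction field \eqref{eq:rare-dyn-sys}.

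For the first bullet I would start from the fact that on $\Omega_L\cup\Omega_R$ the second component of the field $r^c$, namely $\lambda^s-\lambda^c$, is nonzero by the very definition \eqref{eq:coincidence-alpha} of $\mathcal C$. Hence $c$ is a strictly monotone parameter along any integral curve confined to one of these regions, so the curve is locally the graph of a function $s=s(c)$ solving $ds/dc=-f_c/(\lambda^s-\lambda^c)$. To obtain the sign of $\tfrac{d}{dc}\lambda^c(s(c),c)$ I would compute the directional derivative of $\lambda^c$ along $r^c$ and divide by $dc/d\xi=\lambda^s-\lambda^c$. Using $\partial_s\lambda^c=(\lambda^s-\lambda^c)/(s+a_c)$ and $\partial_c\lambda^c=f_c/(s+a_c)-f a_{cc}/(s+a_c)^2$, the two $f_c$-terms cancel exactly and one is left with
\[
\frac{d}{dc}\lambda^c(s(c),c)=-\frac{f\,a_{cc}}{(s+a_c)^2},
\]
which is strictly positive on $\Omega_L\cup\Omega_R$ since there $s>0$ (so $f>0$ by (F2)) while $a_{cc}<0$ by (A3). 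I expect this part to be pure bookkeeping.

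For the second bullet I would first locate the fixed points. A zero of \eqref{eq:rare-dyn-sys} forces $f_c=0$ and $\lambda^s=\lambda^c$; by (F4) the first condition holds exactly on the line $c=c^*$, and restricted to that line the second condition is precisely the equation defining $s^*$, which (F4) declares to have the unique solution $s^*$. Hence $(s^*,c^*)$ is the unique critical point. To classify it I would linearize: writing $m=f_{sc}$, $p=f_{ss}$, $q=f_{cc}$ and $\beta=f a_{cc}/(s+a_c)^2<0$, all evaluated at $(s^*,c^*)$, the Jacobian of the field \eqref{eq:rare-dyn-sys} is
\[
\begin{pmatrix} -m & -q \\ p & m+\beta \end{pmatrix}.
\]
The decisive simplification is that (F4) forces $f_c(\cdot,c^*)\equiv 0$ on $(0,1)$ (the sign change of $f_c$ at $c^*$ holds for every $s$), so differentiating in $s$ gives $m=f_{sc}(s^*,c^*)=0$; the Jacobian then has trace $\beta<0$ and determinant $pq=f_{ss}f_{cc}$.

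The main obstacle is pinning down the sign of this determinant, i.e.\ of $f_{ss}(s^*,c^*)$. Here I would use that along $c=c^*$ the quantity $\lambda^s-\lambda^c$ passes from positive near $s=0$ (where $f_s(s+a_c)>f$) to negative near $s=1$ (where $\lambda^s=0<\lambda^c$), so its unique zero at $s^*$ is a genuine sign change, giving $f_{ss}(s^*,c^*)=\tfrac{d}{ds}(\lambda^s-\lambda^c)\big|_{s^*}\le 0$; combined with $f_{cc}(s^*,c^*)>0$ from (F4) this yields $\det<0$, so $(s^*,c^*)$ is a saddle. It then remains to match the four separatrices of the saddle to the curves $\Gamma_1,\dots,\Gamma_4$: the two nullclines, the line $c=c^*$ (where the $s$-component of $r^c$ vanishes) and the locus $\mathcal C$ (where the $c$-component vanishes), split a neighbourhood of $(s^*,c^*)$ into the four sectors $\Omega_L$ and $\Omega_R$ on either side of $c=c^*$, and from the eigendirections $(q,-\mu_\pm)$ (of opposite slope, since $\mu_+>0>\mu_-$) I would verify that exactly one separatrix branch enters each sector, producing the four curves with the stated region membership. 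The delicate points I anticipate are upgrading $f_{ss}(s^*,c^*)\le 0$ to the strict inequality needed for a nondegenerate saddle (a simple zero, which I would extract from the uniqueness assumption in (F4)) and the sector-by-sector identification of the separatrices.
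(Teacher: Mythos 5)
Your overall route coincides with the paper's proof: the first bullet is established there by exactly the two identities you derive, $(\lambda^c-\lambda^s)\,s'(c)=f_c$ and $\frac{d}{dc}\lambda^c(s(c),c)=-a_{cc}(c)f(s(c),c)/(s(c)+a_c(c))^2$ (your cancellation of the $f_c$-terms is the computation behind the paper's ``easily verifiable formulas''), and the second bullet is likewise handled by linearizing \eqref{eq:rare-dyn-sys} at $u^*=(s^*,c^*)$, using $f_c(\cdot,c^*)\equiv0$ (hence $f_{sc}(s^*,c^*)=0$) to reduce the Jacobian to
$\begin{pmatrix} 0 & -f_{cc}\\ f_{ss} & f a_{cc}/(s+a_c)^2 \end{pmatrix}$
and reading off real eigenvalues of opposite sign from $f_{ss}(u^*)<0<f_{cc}(u^*)$. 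Your identification of the four separatrices with $\Gamma_1,\dots,\Gamma_4$ is treated no more formally in the paper, which settles it by the qualitative phase portrait (Fig.~\ref{fig:rarefactions}), so that flagged concern is at the same level of rigor as the original.

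The one step where your plan as written would fail is the strict inequality $f_{ss}(s^*,c^*)<0$. You propose to upgrade $f_{ss}(s^*,c^*)\le 0$ to a strict inequality using ``the uniqueness assumption in (F4)'', but uniqueness of the zero of $s\mapsto\lambda^s-\lambda^c$ on $\{c=c^*\}$ does not make that zero simple: a function behaving like $-(s-s^*)^3$ has a unique sign-changing zero with vanishing derivative, and nothing in (F4) excludes this. The strictness in fact comes from the S-shape assumption (F3), which your sketch never invokes at this point. Set $G(s)=f_s(s,c^*)\bigl(s+a_c(c^*)\bigr)-f(s,c^*)$, which vanishes exactly where $\lambda^s=\lambda^c$; by (F1)--(F2), $G(0)=0$ and $G(1)=-1$, while $G'(s)=f_{ss}(s,c^*)\bigl(s+a_c(c^*)\bigr)$, so by (F3) $G$ strictly increases on $(0,s^I(c^*))$ and strictly decreases on $(s^I(c^*),1)$. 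Hence $G(s^I(c^*))>0$ and the unique zero $s^*$ lies strictly inside $(s^I(c^*),1)$, where (F3) gives $f_{ss}(s^*,c^*)<0$ outright. With this replacement your determinant $f_{ss}f_{cc}$ is strictly negative, the saddle is nondegenerate, and the rest of your argument goes through as in the paper.
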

\begin{proof}
    The properties of rarefaction curves follow from the following easily verifiable formulas:
\begin{align*}
    (\lambda^c-\lambda^s)\cdot s'(c)=f_c,\qquad \frac{d}{dc}\lambda^{c}(s(c),c)=
    -\frac{a_{cc}(c)\cdot f(s(c),c)}{(s(c)+a_c(c))^2}.
\end{align*}
Due to assumptions (F1)--(F4), there exists a unique point $(s^*,c^*)$, which satisfies
\begin{align*}
    f_c(s^*,c^*)=0,\qquad \lambda^s(s^*,c^*)=\lambda^c(s^*,c^*),
\end{align*}
thus $(s^*,c^*)$ is a fixed point of the system~\eqref{eq:rare-dyn-sys}.
The linearization of the system~\eqref{eq:rare-dyn-sys} at the point $u^*=(s^*,c^*)$ states $u_\xi=L(u^*)u$ for the matrix $L$ defined as:
\begin{align*}
    L=\begin{pmatrix} 
    -f_{sc} &  -f_{cc}\\ 
    f_{ss} - \frac{\lambda^{s}-\lambda^{c}}{s+a_c} & 
    f_{sc} - \frac{f_c}{s+a_c} + \frac{f a_{cc}}{(s+a_c)^2}
    \end{pmatrix}.
\end{align*}
At fixed point $u^*$ the expression for $L$ simplifies:
\begin{align*}
    L(u^*)=\begin{pmatrix} 
    0 &  -f_{cc}\\ 
    f_{ss} & 
     \frac{f a_{cc}}{(s+a_c)^2}
    \end{pmatrix}.
\end{align*}
The equation on eigenvalues $\mu$ of $L(u^*)$ is
\begin{align*}
    \mu^2-\mu\cdot \frac{fa_{cc}}{(s+a_c)^2}+f_{ss}f_{cc}=0.
\end{align*}
The eigenvalues of $L(u^*)$ are
\begin{align*}
    \mu_{\pm}(u^*)=\frac{1}{2}\left(\frac{f a_{cc}}{(s+a_c)^2}\pm\sqrt{\mathcal{D}}\right),\qquad \mathcal{D}=\left(\frac{f a_{cc}}{(s+a_c)^2}\right)^2-4f_{ss}f_{cc}.
\end{align*}
Notice that at the point $u^*$ we have $f_{ss}<0$ and $f_{cc}>0$ due to assumptions (F1)--(F4). Therefore, $\mu_+(u^*)>0$ and $\mu_-(u^*)<0$, and  the point $u^*$ is a saddle point for the dynamical system~\eqref{eq:rare-dyn-sys}. See the qualitative picture of the set of the $c$-rarefaction curves in~Fig.~\ref{fig:rarefactions}.
\end{proof}
\begin{remark}
\label{rm:rare}
    Fix the state $(s_0,c_0)$ with $c_0<c^*$. If $s_0\in[0,s_1(c_0))\cup(s_2(c_0),1]$, the $c$-rarefaction curve that passes through the point $(s_0,c_0)$ is defined for all $c\in[0,1]$; meanwhile if $s_0\in(s_1(c_0),s_2(c_0))$ this rarefaction curve  reaches the coincidence locus at some point $(\tilde{s},\tilde{c})\in\mathcal{C}$ and can not be defined for $c>\tilde{c}$. For more details see~\cite[Section 3]{JnW}. In particular, this means that when $s_0\not=s_1(c_0)$ and $s_0\not=s_2(c_0)$, the $c$-rarefaction curve lies either in $\Omega_L\cup\,\mathcal{C}$ or in $\Omega_R\cup\,\mathcal{C}$. On the other hand, if $s_0=s_2(c_0)$, the union $\Gamma_2\cup\Gamma_3$ is also an integral curve of the system~\eqref{eq:rare-dyn-sys}, and can be seen as a unique $c$-rarefaction curve (similarly, $\Gamma_1\cup\Gamma_4$). This is one of the major differences with the monotone case considered in~\cite{JnW}.
    
\end{remark}

\subsubsection{Shock waves}
Recall the standard ordering of the eigenvalues
of the characteristic matrix~\eqref{eq:characteristic-matrix}
as $\lambda_1(u)<\lambda_2(u)$,
called the 1-family and 2-family characteristic speeds.
In polymer models,
$\lambda_1(u)$ equals $\lambda^c(u)$
when $u\in\Omega_L$,
but equals $\lambda^s(u)$
when $u\in\Omega_R$.

A standard way to classify a discontinuity
is based on the ordering of the characteristic speeds
on its two sides relative to its propagation speed $v$,
\emph{i.e.},\ $\lambda_i(u_-) - v$ and $\lambda_i(u_+) - v$ for $i = 1, 2$ (see, for example,~\cite{Lax57}, \cite{IsaMarPlohr}, and~\cite[Chapter 8]{Dafermos}).
Four of the possibilities, which we call
the 1-family Lax, 2-family Lax, overcompressive, and crossing
configurations of characteristic paths,
are depicted in Fig.~\ref{fig:types_shocks}:
\begin{itemize}
\item 1-family Lax: $\lambda_1(u_-)>v>\lambda_1(u_+)$,
$v <\lambda_2(u_-)$, and $v<\lambda_2(u_+)$;
\item 2-family Lax: $\lambda_2(u_-)>v>\lambda_2(u_+)$,
$v >\lambda_1(u_-)$, and $v>\lambda_1(u_+)$;
\item overcompressive: $\lambda_1(u_-)>v>\lambda_1(u_+)$
and $\lambda_2(u_-)>v>\lambda_2(u_+)$;
\item crossing:
$\lambda_2(u_-)>v>\lambda_1(u_-)$
and $\lambda_1(u_+)<v<\lambda_2(u_+)$.
\end{itemize}

\begin{figure}[ht]
    \centering
    \includegraphics[width=0.24\textwidth]{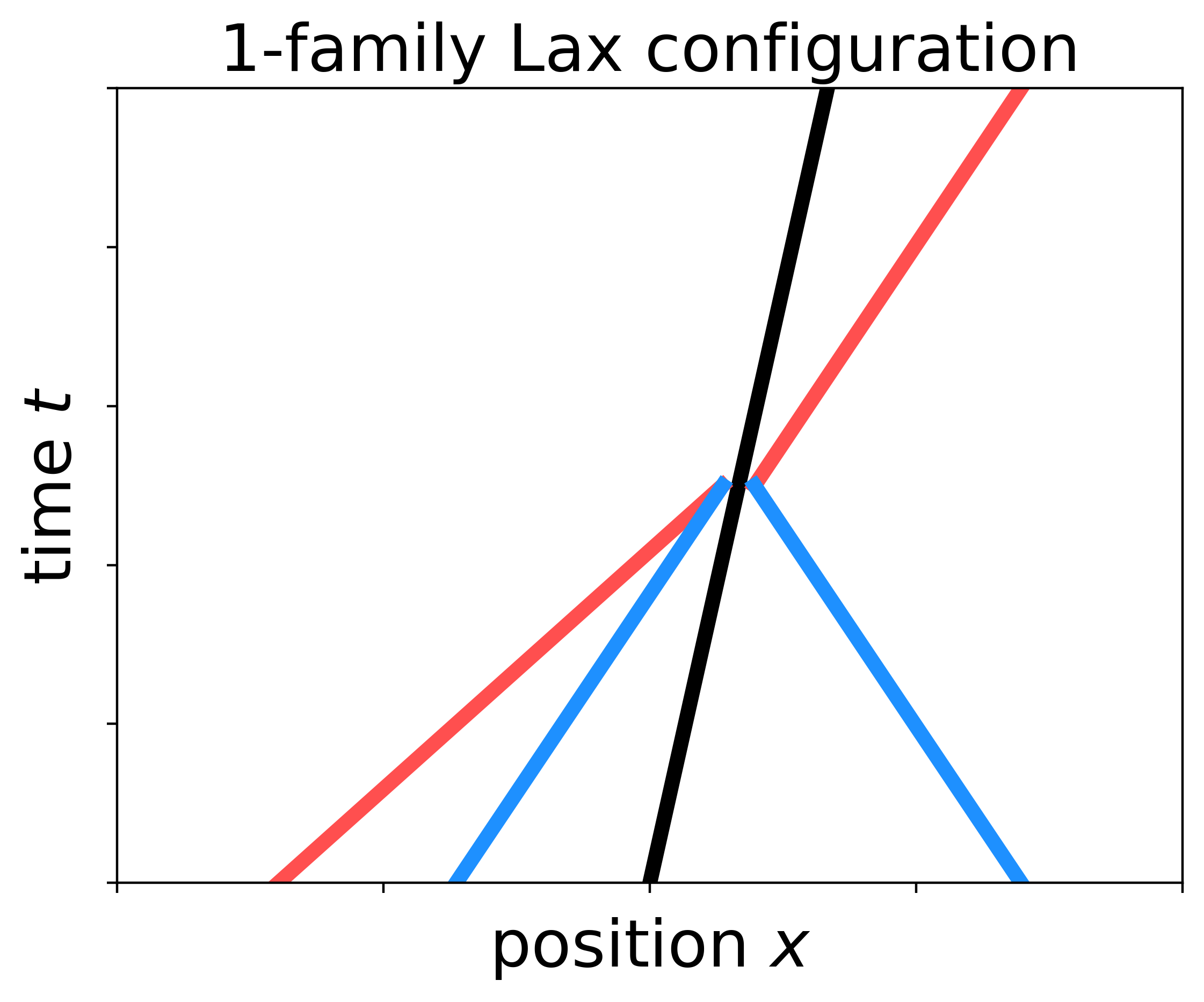}
    \includegraphics[width=0.24\textwidth]{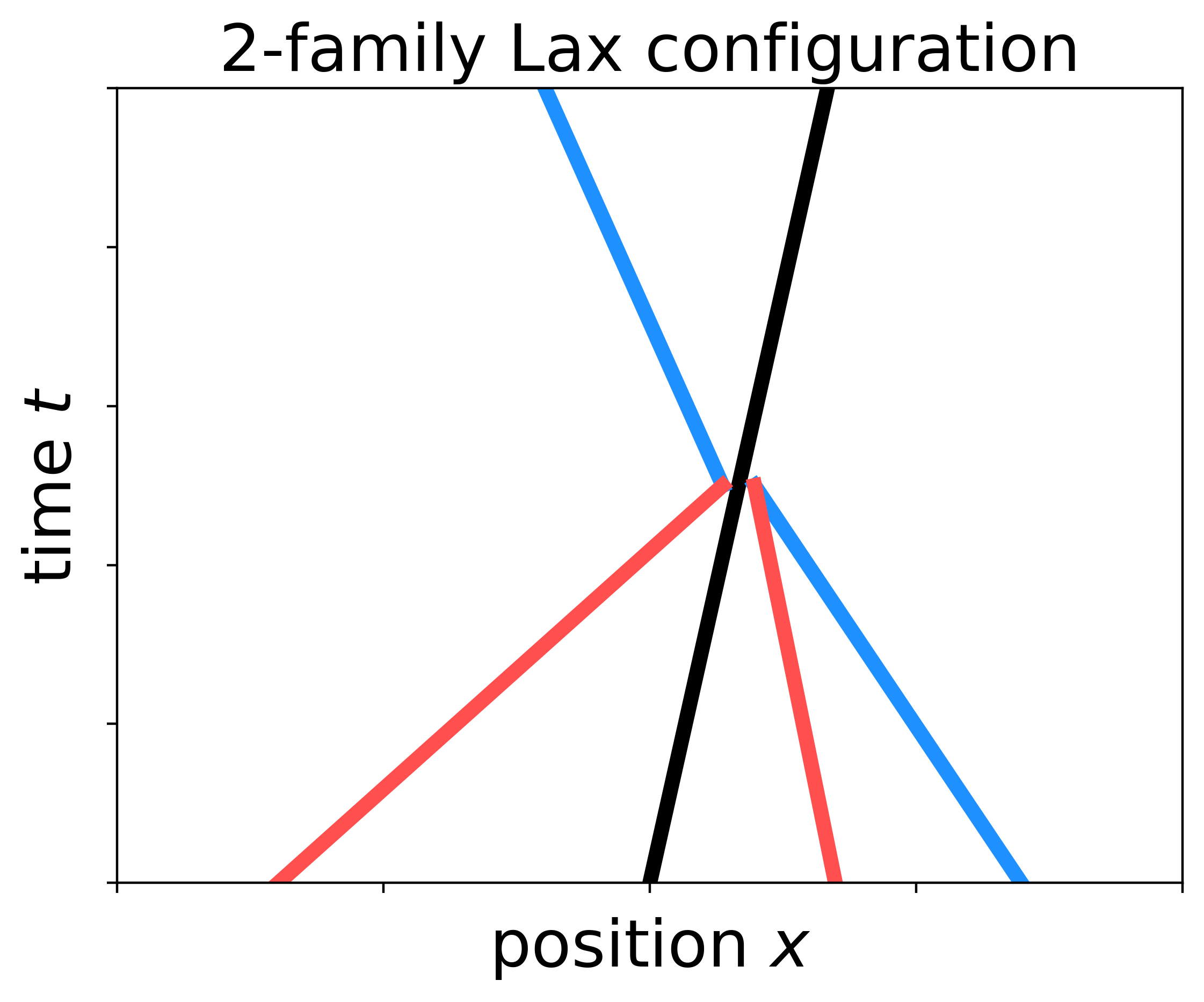}
    \includegraphics[width=0.24\textwidth]{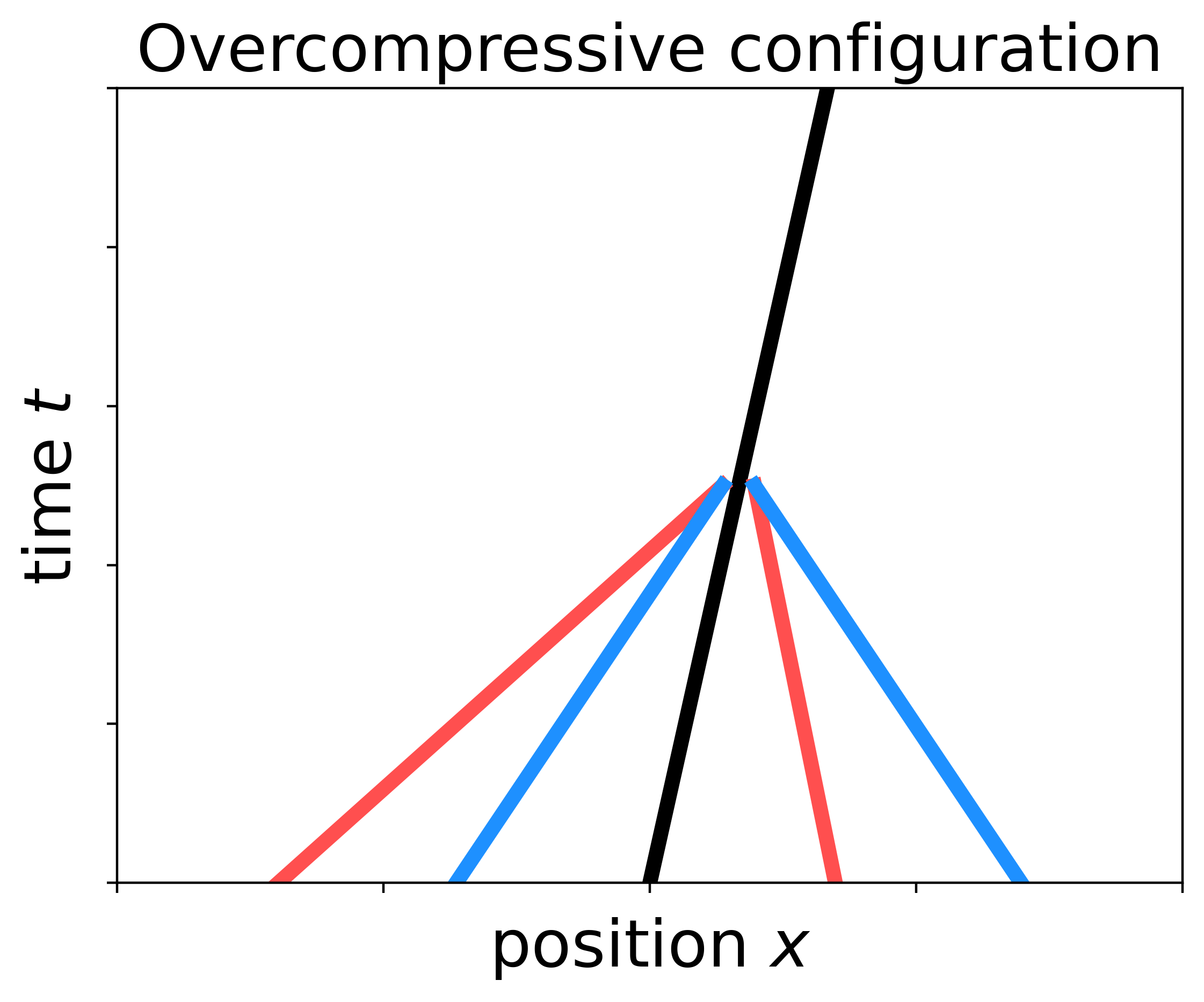}
    \includegraphics[width=0.24\textwidth]{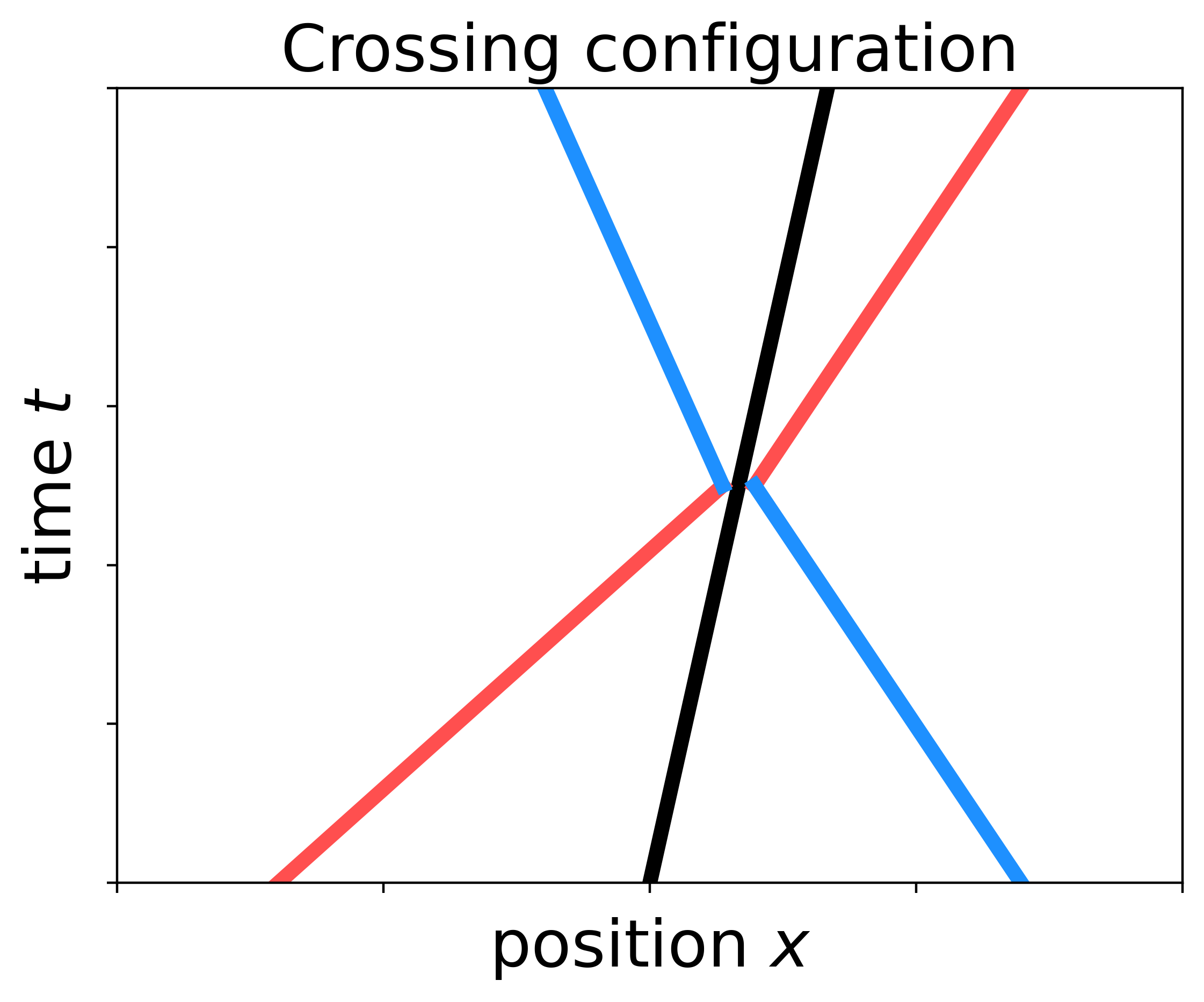}
    \caption{Four possible configurations of characteristic paths
    drawn in $(x,t)$-space: 1-family Lax, 2-family Lax, overcompressive,
    and crossing. The black line is the shock trajectory with
    speed $v$. Blue curves are characteristic paths for the 1-family,
    and red curves are for the 2-family.}
    \label{fig:types_shocks}
\end{figure}

\subsubsection{Compatibility by speed}
A solution of a Riemann problem is an assembly
of $s$-wave groups, $c$-waves,
and constant states ordered by speed. We shall use the notation $u\xrightarrow{s}u'$
(respectively, $u\xrightarrow{c} u'$)
to denote an $s$-wave group (resp., a $c$-wave) connecting states $u$ and $u'$
in the direction of increasing speed. We say that the two waves $u\xrightarrow{\phantom{s}}u'$ and $u'\xrightarrow{\phantom{s}}u''$ are \emph{compatible by speed} if the speed increases when we go through the state~$u'$, and therefore they can be composed to solve the Riemann problem with left state $u$ and right state $u''$.

As noted in Lemma~5.1 of~\cite{JnW}, the solution $u=(s,c)$ of a Riemann problem has the property that the function $c(x,t)$ is a monotone function of $x$ for every $t\geq0$. In the proof the authors do not assume the monotonicity of $f$ in $c$ and use only the concavity of $a$, thus this lemma is also valid under our assumptions on $f$ and $a$. For the reader's convenience, we recall this lemma below.
\begin{proposition}[Lemma~5.1,~\cite{JnW}]
    \label{prop:JW-monotonicity-in-c}
    Assume that the three waves
    \begin{align*}
        u_L\xrightarrow{c\text{-wave}} u_1\xrightarrow{s\text{-wave}} u_2\xrightarrow{c\text{-wave}} u_R
    \end{align*}
    are compatible by speed. Then both $c$-waves are rarefaction waves.
\end{proposition}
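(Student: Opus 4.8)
The plan is to reduce the statement to a single speed comparison. First I would record that $c$ is a Riemann invariant of the $s$-family: since $r^s=(1,0)$ in~\eqref{eq:eigenvectors}, the value of $c$ is unchanged across the $s$-wave group, so $u_1$ and $u_2$ share one concentration $\bar c$. Next I would fix the direction in which each $c$-wave moves $c$. By Proposition~\ref{prop:rare} we have $\frac{d}{dc}\lambda^{c}>0$ along every $c$-rarefaction curve, so along a $c$-rarefaction wave the speed $\lambda^c$ increases precisely when $c$ increases; hence a $c$-rarefaction raises $c$ from left to right. On the other hand, Proposition~\ref{prop:inadmissible_shocks} forbids admissible shocks with $c^+>c^-$, so an admissible $c$-shock lowers $c$. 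Thus ``both $c$-waves are rarefactions'' is equivalent to ``$c$ increases strictly through the whole configuration'', and it suffices to rule out the opposite, decreasing alternative $c_L>\bar c>c_R$, in which both $c$-waves would be shocks. That the remaining mixed possibilities ($\bar c$ an interior extremum of $c$) do not occur is exactly the monotonicity of $c$ in $x$; this follows from the transport identity $(s+a_c(c))\,c_t+f\,c_x=0$, obtained by eliminating $s_t$ between the two equations of~\eqref{eq:main_system_chem_flood}, which identifies $\lambda^c$ as the propagation speed of $c$, together with Proposition~\ref{prop:inadmissible_shocks}.

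For the decreasing alternative I would use the Lax-type information carried by a $c$-shock. Writing the Rankine--Hugoniot relations~\eqref{eq:RH-1} as $v=f(s^\pm,c^\pm)/(s^\pm+h)$ with $h=[a]/[c]$, the concavity (A3) and monotonicity (A2) of $a$ place $h$ strictly between $a_c(c^-)$ and $a_c(c^+)$, both of which are positive. Comparing with $\lambda^c=f/(s+a_c)$ this yields the $c$-family Lax inequalities $\lambda^c(u^+)<v<\lambda^c(u^-)$, and, more usefully, it exhibits $v$ as the slope of the secant through $(s^\pm,f(s^\pm,c^\pm))$ that meets the $s$-axis at $-h<0$. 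Now I translate compatibility by speed across the two interfaces. At $u_1$ the left $c$-shock must be no faster than the slowest speed $\sigma_-$ of the $s$-group (attained at $u_1$), which forces $\sigma_->\lambda^c(u_1)$; at $u_2$ the right $c$-shock must be at least as fast as the fastest speed $\sigma_+$ of the $s$-group (attained at $u_2$), which forces $\sigma_+<\lambda^c(u_2)$.

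The contradiction should then come from the geometry of the coincidence locus. Using (F2)--(F3) one checks that the convex branch $\{0<s<s^I(\bar c)\}$ lies entirely in $\Omega_L$, since there $f_s>f/s>\lambda^c$; consequently $\mathcal{C}$ and $\Omega_R$ are confined to the concave branch $\{s>s^I(\bar c)\}$. Because $c\equiv\bar c$ is constant, the $s$-group is the scalar Oleinik solution for $f(\cdot,\bar c)$, so its extreme speeds $\sigma_\mp$ are slopes of supporting lines of the convex hull of $f(\cdot,\bar c)$ through the points over $s_1$ and $s_2$; the plan is to compare the $s$-axis intercepts of these supporting lines with $-a_c(\bar c)$ and to observe that the right $c$-shock, whose secant meets the axis at $-h_2<0$, is necessarily too slow to follow the terminal $s$-wave, contradicting $\sigma_+\le v_2$. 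I expect this last step to be the main obstacle: it requires tracking, uniformly over all admissible internal structures of the $s$-group and over the positions of $u_1,u_2$ relative to $\mathcal{C}$, whether each boundary speed lies above or below $\lambda^c$ --- a delicate bookkeeping, because the $s$-family changes its Lax type (passing through the crossing configuration) exactly as it traverses $\mathcal{C}$. I would invoke Lemma~\ref{lemma2} to dispatch the residual borderline configurations in which an interface state sits on $\mathcal{C}$.
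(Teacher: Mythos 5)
Your first half is sound and in fact reproduces the one ingredient the paper highlights: using $h=[a]/[c]$ and (A2)--(A3) to place $h$ strictly between $a_c(c^-)$ and $a_c(c^+)$, whence an admissible $c$-shock satisfies $\lambda^c(u^+)<v<\lambda^c(u^-)$; together with compatibility this gives $\sigma_-\geq v_1>\lambda^c(u_1)$ if the left $c$-wave is a shock and $\sigma_+\leq v_2<\lambda^c(u_2)$ if the right one is. (Note the paper does not reprove the proposition at all: it cites \cite{JnW}, Lemma~5.1, observing that that proof uses only the concavity of $a$ --- exactly through this placement of $h$ --- and so survives dropping monotonicity of $f$ in $c$.) But two steps of your plan fail. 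First, ruling out the mixed configurations by ``monotonicity of $c$ in $x$'' is circular: spatial monotonicity of $c$ in the Riemann solution is the \emph{consequence} of this very proposition (that is how it is used in the paragraph preceding Proposition~\ref{prop:JW-monotonicity-in-c}), and the smooth-region identity $(s+a_c(c))c_t+fc_x=0$ says nothing across discontinuities, which is where an interior extremum of $c$ would sit.

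Second, the step you flag as ``the main obstacle'' is where the actual proof lives, and it needs none of the coincidence-locus bookkeeping you propose. The decisive fact is that the sign of $(\text{wave speed}-\lambda^c)$ is invariant across an $s$-wave group at fixed $\bar c$. For an $s$-shock of speed $\sigma$ between $s_1$ and $s_2$ this is an elementary collinearity statement about the three points $(-a_c(\bar c),0)$, $(s_1,f(s_1,\bar c))$, $(s_2,f(s_2,\bar c))$: one checks directly that $\sigma>\lambda^c(s_1)$, $\sigma>\lambda^c(s_2)$ and $\lambda^c(s_2)>\lambda^c(s_1)$ are all equivalent. For an $s$-rarefaction, parametrizing by the fan speed $\xi=f_s$, the function $g(\xi)=\xi-\lambda^c$ satisfies $g'=1$ at any zero of $g$ (since $d\lambda^c/ds$ is proportional to $f_s-\lambda^c$), so $g$ can only cross from negative to positive; chaining these through the group propagates $\sigma_->\lambda^c(u_1)$ to $\sigma_+>\lambda^c(u_2)$ and, backwards, $\sigma_+<\lambda^c(u_2)$ to $\sigma_-<\lambda^c(u_1)$. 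This single lemma kills all three bad configurations (shock--shock, shock--rarefaction, rarefaction--shock) at once from the two interface inequalities you already derived, with no convex-hull intercept comparisons and no case split along $\mathcal{C}$. Finally, your fallback appeal to Lemma~\ref{lemma2} is a misuse: that lemma compares two $c$-shocks sharing the same pair $(c^-,c^+)$ and excludes their simultaneous admissibility; it has no bearing on borderline states of an $s$-wave group on the coincidence locus.
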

Let \(u_L=(s_L,c_L)\) and \(u_R=(s_R,c_R)\) denote the left and right states of the Riemann problem, respectively. By Proposition~\ref{prop:JW-monotonicity-in-c}, if \(c_L<c_R\), any solution of the Riemann problem consists of \(s\)-waves together with \(c\)-rarefaction waves; if \(c_L>c_R\), any solution consists of \(s\)-waves and a single \(c\)-shock. Moreover, if \(c_L=c_R\), any solution reduces to a single \(s\)-wave. Hence, by the theory of the Buckley--Leverett equation~\cite{BL}, the Riemann problem for the system~\eqref{eq:main_system_chem_flood} admits a unique solution when \(c_L=c_R\). Therefore, it remains to prove Theorem~\ref{thm2} in the case \(c_L\neq c_R\).

\subsection{Region layout for the Riemann problem solutions}
\label{subsec:Riemann-solutions}
In this section we describe the layout of the regions of $(u_L, u_R)$ in which the Riemann problem solutions have similar structure (the same sequence of $s$- and $c$-waves).
In Section~\ref{subsec:RP-1-c-shock} we treat the case $c_L>c_R$, while the case $c_L<c_R$ is considered in Section~\ref{subsec:RP-rare}.

\subsubsection{Case $c_L>c_R$. }
\label{subsec:RP-1-c-shock}
Throughout this section we shall  assume that the values of $c_L$ and $c_R$ are fixed with $c_L>c_R$. Our aim is to prove that for any $s_L, s_R\in[0,1]$ there exists a unique solution to a Riemann problem~\eqref{eq:main_system_chem_flood},~\eqref{eq:Riemann-problem}. Recall that due to Proposition~\ref{prop:JW-monotonicity-in-c} the case $c_L>c_R$ corresponds to the solution of a Riemann problem with at most one $c$-shock,~i.e. the solution has the following structure in terms of $s$ and $c$-waves:
\begin{align}
\label{eq:scs-shock}
    u_L\xrightarrow{s\text{-wave}} u^{-} \xrightarrow{c\text{-shock}} u^{+} \xrightarrow{s\text{-wave}} u_R,
\end{align}
where the first, the last or both $s$-waves may be absent. The solutions of the form~\eqref{eq:scs-shock} clearly satisfy the conditions~(W1)--(W3) from the definition~\eqref{def:solution} of W-solution. Moreover, as described below, the $c$-shock wave is obtained as a limit of travelling wave solutions, thus condition~(W4) is also fulfilled.

The case $c^*\geq c_L>c_R$ corresponds to the monotone dependence $f$ on $c$ (decreasing in $c$) and the explicit solutions to the Riemann problem were constructed in~\cite[Section~7]{JnW}. Note that in this case the set of admissible shock waves does not depend on the choice of $\kappa=\varepsilon_d/\varepsilon_c$ (see~\eqref{eq:main_system_dissipative}), thus the set of admissible Riemann solutions in this paper and in~\cite{JnW} is the same. The case $c_L>c_R\geq c^*$ also corresponds to the monotone dependence $f$ on $c$ (increasing in $c$) and can be solved using the same approach. Therefore, it is enough to consider the case $c_L>c^*>c_R$.

Consider a $c$-shock wave between the states $u^-=(s^-,c_L)$ and $u^+=(s^+,c_R)$ with the velocity $v$, defined by the Rankine-Hugoniot conditions~\eqref{eq:RH-1}, i.e.
\begin{align}
\label{eq:RH-speed}
    v = \dfrac{[f(s,c)]}{[s]} = \dfrac{f(s^+,c_R)}{s^+ + h}, \qquad h = \dfrac{[a(c)]}{[c]},
\end{align}
where $[q(s,c)]=q(s^+,c_R)-q(s^-,c_L)$ in this case. It will be convenient to have the following notation (as in~\cite[Section~7]{JnW}). 
    Due to condition (F3) on the flow function~$f$, it is clear that for the state $u^+=(s^+, c_R)$ 
    there exists at most one value $s^K=s^K(u^+)\not=s^+$ such that
\begin{align*}
     \dfrac{f(s^K,c_R)}{s^K + h}=v.
\end{align*}
If such $s^K$ exists, we call it the \textit{critical shock value}. Geometrically, $s^K$ corresponds to the abscissa of the intersection point of the graph of $f(s,c_R)$ and the line that passes through the point $(s^+, f(u^+))$ with inclination equal to $v$ (in particular, the line also passes through the points $(s^-,f(u^-))$ and $(-h,0)$, see Fig.~\ref{fig:sK}).

\begin{figure}[h]
    \centering
    \includegraphics[width=0.6\linewidth]{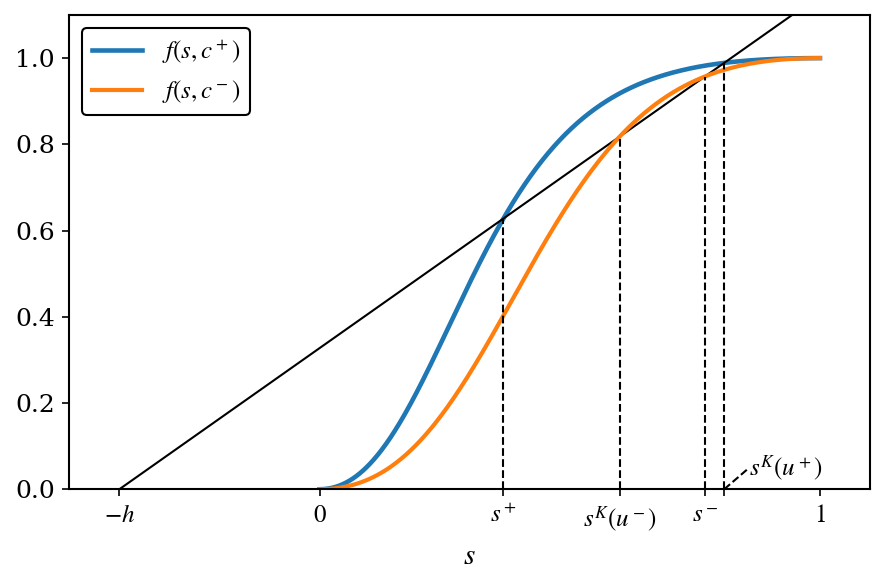}
    \caption{Geometrical interpretation of the critical values $s^K(u^+)$ and $s^K(u^-)$.}
    \label{fig:sK}
\end{figure}

Let us first formulate the extension of Theorem~3.2 in~\cite{Bahetal} for a larger class of states $u_L$ and $u_R$ (in~\cite{Bahetal} $c_L=1$, $c_R=0$, but the proof is identical for any $c_L>c^*>c_R$). Lemma~\ref{lm:cLcR} serves as a vanishing viscosity criterion for choosing the admissible $c$-shock wave with crossing configuration (such waves are also called undercompressive or transitional shock waves), see condition (W4). 
\begin{lemma}
\label{lm:cLcR}
Consider a system of conservation laws~\eqref{eq:main_system_chem_flood} under assumptions (F1)--(F4) and (A1)--(A3). Fix $c_L$ and $c_R$ such that $c_L>c^*>c_R$. Also fix the parameters $\varepsilon_c, \varepsilon_d>0$ of the equation~\eqref{eq:main_system_dissipative}. Then there exist $0<v_{\min}<v_{\max}<\infty$, such that  for every $\kappa=\varepsilon_d/\varepsilon_c\in(0, +\infty)$, there exist unique 
\begin{itemize}
    \item points $s^-:=s^-(\kappa;c_L,c_R)\in[0,1]$ and $s^+:=s^+(\kappa;c_L,c_R)\in[0,1]$;
    \item velocity $v:=v(\kappa;c_L,c_R)\in[v_{\min},v_{\max}]$,
\end{itemize}
such that the $c$-shock wave, connecting $u^-:=u^-(\kappa;c_L,c_R)=(s^-,c_L)$ and $u^+:=u^+(\kappa;c_L,c_R)=(s^+,c_R)$ with velocity~$v$, is admissible by vanishing viscosity criterion (see condition (W4)) and is  of crossing configuration.

Furthermore, the following sequence of waves is compatible by speeds
\begin{align*}
    u_L=(s_L,c_L)\xrightarrow{s} u^{-} \xrightarrow{c\text{-shock}} u^{+} \xrightarrow{s} u_R=(s_R,c_R)
\end{align*}
for $c_L>c^*>c_R$ and all $s_L$ and $s_R$ under the condition
\begin{align}
\label{eq:cond-sL-sR}
    s^K(u^-)\leq s_L\leq 1\qquad\text{and}\qquad 0\leq s_R\leq s^K(u^+).
\end{align}
\end{lemma}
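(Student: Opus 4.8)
The plan is to adapt the phase-plane analysis behind \cite[Theorem~3.2]{Bahetal}, verifying that it uses nothing about the endpoint concentrations beyond $c_L>c^*>c_R$. First I would record the travelling-wave system for the full dissipative model~\eqref{eq:main_system_dissipative}: proceeding exactly as in Sect.~\ref{sec:sec2-Hopf} but rescaling $\xi$ by $\varepsilon_c$, the diffusion coefficient in the second equation becomes the ratio $\kappa=\varepsilon_d/\varepsilon_c$, so that~\eqref{eq:dyn_sys_cap_diff} generalizes to
\[
s_\xi = f(s,c)-v(s+d_1), \qquad \kappa\, c_\xi = v\bigl(d_1 c - d_2 - a(c)\bigr),
\]
with $d_1,d_2$ as in~\eqref{eq:d1_d2_def}. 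By (A3) the bracket $d_1 c-d_2-a(c)$ is strictly negative on $(c_R,c_L)$, so $c$ is strictly monotone along every profile and may serve as independent variable:
\[
\frac{ds}{dc}=\frac{\kappa\,[f(s,c)-v(s+d_1)]}{v\bigl(d_1 c-d_2-a(c)\bigr)}.
\]
The equilibria lie on the lines $c=c_L$ and $c=c_R$, at the intersections of $f(\cdot,c)$ with $l(s)=v(s+d_1)$; since the linearization is upper triangular, its eigenvalues are $f_s-v$ and $(v/\kappa)(d_1-a_c)$. Concavity of $a$ forces the $c$-direction to be repelling on $c=c_L$ and attracting on $c=c_R$, while the sign of $f_s-v$ decides saddle versus node.

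Next I would establish existence and uniqueness of the crossing shock for each fixed $\kappa$. The admissible transitional wave corresponds to a saddle-to-saddle heteroclinic orbit joining the $c_L$-equilibrium with $f_s<v$ to the $c_R$-equilibrium with $f_s>v$. I would shoot in the parameter $v$: follow the branch of the scalar ODE leaving the $c_L$-saddle along its unstable direction, track its arrival value at $c=c_R$, and use an intermediate-value argument to produce a $v=v(\kappa)$ for which this arrival coincides with the $c_R$-saddle. Uniqueness is then immediate from the monotone comparison of orbits already proved in Lemma~\ref{lemma2}: two connections with distinct speeds $v<w$ forced into the configuration~\eqref{eq:lemma2_assumption} cannot both be admissible, and the positive factor $\kappa$ is irrelevant to that sign computation. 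The uniform bounds $0<v_{\min}<v_{\max}<\infty$ follow from Proposition~\ref{prop:inadmissible_shocks} together with the continuity and compactness of the data. Since every step uses only $c_L>c^*>c_R$ (so that $f$ changes its $c$-monotonicity across the profile), the analysis of \cite{Bahetal} transfers with no change beyond replacing their $1,0$ by $c_L,c_R$.

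Finally I would verify the speed compatibility of the three-wave chain. The crossing configuration places $v$ strictly between the two characteristic speeds at each of $u^\pm$; in particular the left Buckley--Leverett group (along $c\equiv c_L$) from $u_L$ to $u^-$ must run at speeds $\le v$ and the right group (along $c\equiv c_R$) from $u^+$ to $u_R$ at speeds $\ge v$. By the $S$-shape (F3), the line $l(s)=v(s+d_1)$ (with $d_1=h$ as in~\eqref{eq:RH-speed}) meets $f(\cdot,c_R)$ a second time at the critical value $s^K(u^+)$ and $f(\cdot,c_L)$ a second time at $s^K(u^-)$, see Fig.~\ref{fig:sK}. The Oleinik convex/concave-hull construction for the Buckley--Leverett flux then shows that the right group attaches with all speeds $\ge v$ exactly when $0\le s_R\le s^K(u^+)$, and symmetrically the left group attaches with all speeds $\le v$ exactly when $s^K(u^-)\le s_L\le 1$; this is precisely~\eqref{eq:cond-sL-sR}.

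The hard part will be the transitional connection itself: a saddle-to-saddle orbit is a codimension-one, structurally unstable object, so the shooting in $v$ must be paired with genuine control of the continuous dependence on $\kappa$ in order to sweep the whole range $(0,\infty)$ and to pin down $v_{\min}$ and $v_{\max}$. This is exactly the delicate estimate carried out in \cite[Theorem~3.2]{Bahetal}, and rather than reproduce it I would only confirm that each of their phase-plane bounds survives verbatim once the endpoint concentrations $1,0$ are replaced by arbitrary $c_L>c^*>c_R$.
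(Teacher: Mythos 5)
Your proposal takes essentially the same route as the paper: the paper proves this lemma simply by invoking \cite[Theorem~3.2]{Bahetal} and observing that its proof uses nothing about the endpoint concentrations beyond $c_L>c^*>c_R$ (rather than $c_L=1$, $c_R=0$), which is exactly the transfer you carry out, and your sketch --- the $\kappa$-rescaled travelling-wave system, the upper-triangular linearization identifying the saddle-to-saddle crossing connection, shooting in $v$, and uniqueness via the comparison argument of Lemma~\ref{lemma2} --- is a faithful reconstruction of that cited analysis. Your verification of the speed compatibility \eqref{eq:cond-sL-sR} through the critical values $s^K(u^\pm)$ and the Oleinik hull construction for the Buckley--Leverett groups likewise matches the paper's (implicit) justification.
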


Finally, let us prove the Theorem~\ref{thm2} for the case $c_L>c^*>c_R$, $c_L$ and $c_R$ fixed. Consider $s^-$, $s^+$, $u^-$, $u^+$ and $v$ from  Lemma~\ref{lm:cLcR}. 

\begin{figure}[h]
    \centering
    \includegraphics[width=0.5\linewidth]{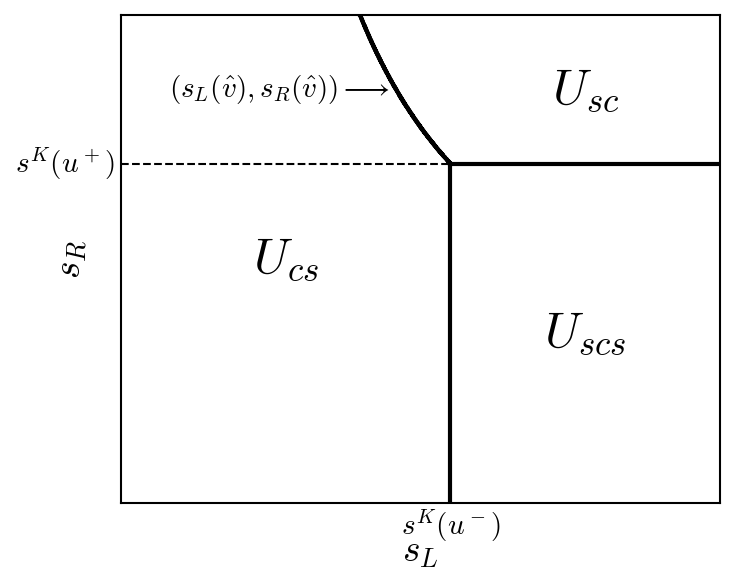}
       
    \caption{Subdivision into three regions in the $(s_L,s_R)$-plane with different structure of solutions in terms of the sequence of $s$-wave groups and $c$-waves:  $\mathbf{U}_{cs}, \mathbf{U}_{sc}$ and $\mathbf{U}_{scs}$.}
    \label{fig:RP-layouts-shocks-1}
\end{figure}
There exist four possible layouts for the structure of the solution to the Riemann problem as a sequence of $s$-wave groups and $c$-waves depending on  $(s_L,s_R)\in[0,1]^2$, see Fig.~\ref{fig:RP-layouts-shocks-1}:

\begin{enumerate}
    \item \textbf{Undercompressive shock.}
    
    Consider a region:
    \begin{align*}
        \mathbf{U}_{scs}=[s^K(u^-),1]\times[0,s^K(u^+)].
    \end{align*}
    If $(s_L,s_R)\in\mathbf{U}_{scs}$, then by Lemma~\ref{lm:cLcR} the following sequence of waves $(scs)$ provides a solution to the Riemann problem:
    \begin{align}
\label{eq:solution-RP-1}
    u_L=(s_L,c_L)\xrightarrow{s} u^{-} \xrightarrow{c\text{-shock}} u^{+} \xrightarrow{s} u_R=(s_R,c_R).
\end{align}
As mentioned in Lemma~\ref{lm:cLcR}, this  $c$-shock wave is undercompressive. If $s_L=s^-$, then the first $s$-wave in~\eqref{eq:solution-RP-1} is absent. Similarly, if $s_R=s^+$, then the last $s$-wave in~\eqref{eq:solution-RP-1} is absent.
\begin{figure}[h]
    \centering
\includegraphics[width=0.32\linewidth]{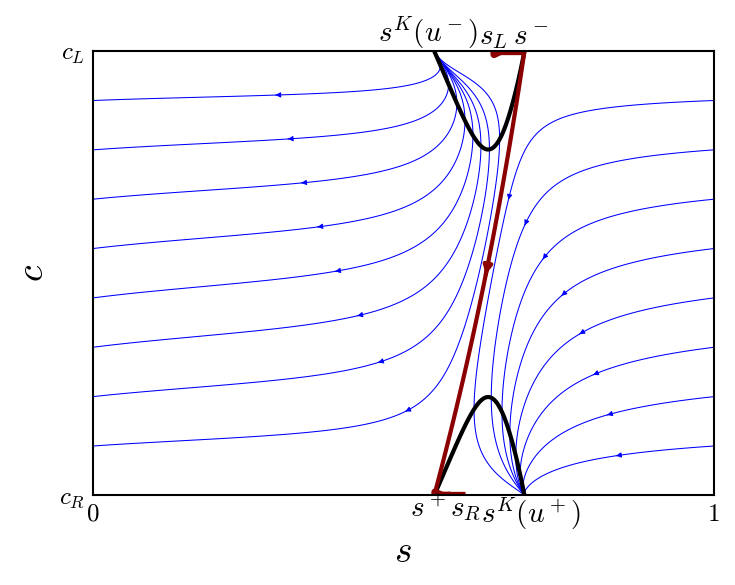}
    \includegraphics[width=0.32\linewidth]{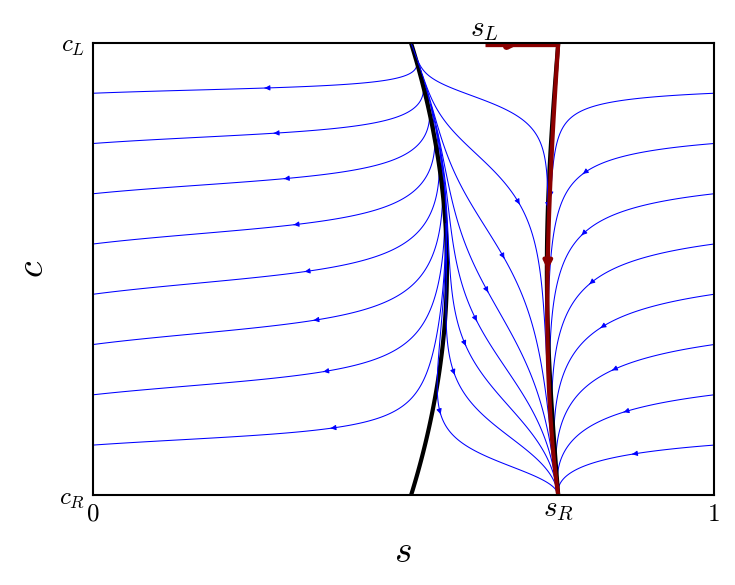}
    \includegraphics[width=0.32\linewidth]{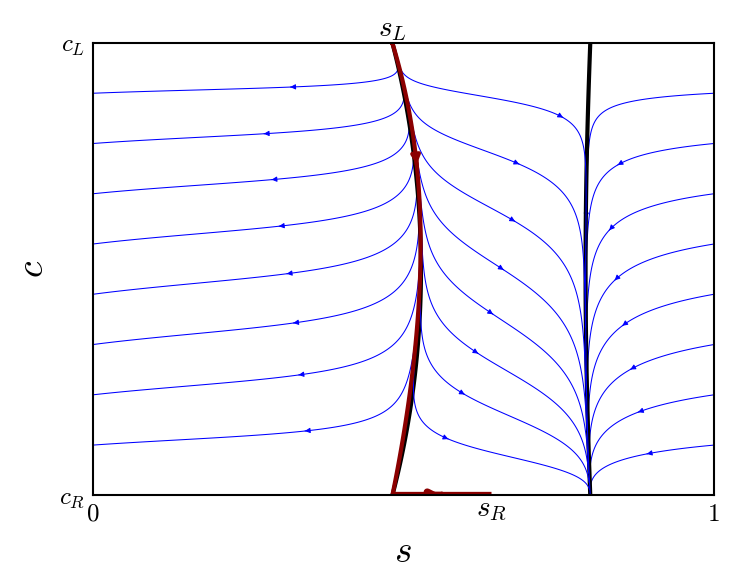}
    \\
 \qquad $(s_L,s_R)\in\mathbf{U}_{scs}$
 \qquad \qquad\qquad
 $(s_L,s_R)\in\mathbf{U}_{sc}$
 \qquad \qquad \qquad
 $(s_L,s_R)\in\mathbf{U}_{cs}$
       \caption{
       The dark red curves represent the sequence of waves~\eqref{eq:solution-RP-1},~\eqref{eq:sc-fast} and~\eqref{eq:cs-slow} in $(s,c)$-plane for three cases: $(s_L,s_R)\in\mathbf{U}_{scs}$, $(s_L,s_R)\in\mathbf{U}_{sc}$ and $(s_L,s_R)\in\mathbf{U}_{cs}$.
       The light blue curves illustrate the trajectories of the corresponding dynamic system~\eqref{eq:dyn_sys_cap_diff} and the dark blue curves illustrate the nullclines, see Section~\ref{sec:sec2-Hopf}.}
    \label{fig:RP-layouts-shocks}
\end{figure}

    \item \textbf{Overcompressive shock.}
    
    Let $v_1\in\mathbb{R}$ be defined as:
\begin{align*}
    v_1=\frac{1}{1+h}, \quad h=\frac{a(c_L)-a(c_R)}{c_L-c_R}.
\end{align*}

    For any speed $\hat{v}\in[v_1,v]$ there exist four critical points of the travelling wave dinamic system \eqref{eq:dyn_sys_cap_diff}: $(s_L^{1,2}, c_L)$, $(s_R^{1,2}, c^R)$. Any two of these points satisfy the Rankine-Hugoniot condition \eqref{eq:RH-speed}. If we denote $s_L(\hat{v}) = s_L^1 < s_L^2$ and $s_R(\hat{v}) = s_R^2 > s_R^1$, then these saturation values have the following properties
    \begin{enumerate}
        \item $s_L(\hat{v})\in[0,s^K(u^-)]$; \quad $s_R(\hat{v})\in[s^K(u^+),1]$;
        \item the states $u_L=(s_L(\hat{v}),c_L)$, $u_R=(s_R(\hat{v}),c_R)$ and the speed $\hat{v}$ satisfy the Rankine-Hugoniot conditions~\eqref{eq:RH-speed};
        \item the following sequence of waves $(cs)$ provides a solution to the Riemann problem:
        \begin{align}
        \label{eq:cs-overcompressive}
    u_L=(s_L(\hat{v}),c_L)\xrightarrow{c\text{-shock}} \hat{u}=(s^K(u_R),c_R) \xrightarrow{s\text{-shock}} u_R=(s_R(\hat{v}),c_R).
\end{align}  
Note that the speeds of $c$-shock and $s$-shock are both equal to $\hat{v}$.
    \end{enumerate}
    
Also there is an alternative way to represent the solution~\eqref{eq:cs-overcompressive} as a combination of $s$-shock and $c$-shock  with both speeds equal to $\hat{v}$ (the order is different, $(sc)$):
    \begin{align}
    u_L=(s_L(\hat{v}),c_L)\xrightarrow{s\text{-shock}} \hat{u}=(s^K(u_L),c_L) \xrightarrow{c\text{-shock}} u_R=(s_R(\hat{v}),c_R).
\end{align}    
    
\begin{remark}
\label{rmk:overcompressive-curve}
    The set of all states $(s_L(\hat{v}),s_R(\hat{v}))$ in the plane $(s_L,s_R)$, described above, forms a curve parametrized by the speed $\hat{v}\in[v_1,v]$. This curve serves as a boundary between the two regions $\mathbf{U}_{sc}$ and $\mathbf{U}_{cs}$, where the structure of the solutions is of the type $(sc)$ and $(cs)$, respectively (see Fig.~\ref{fig:RP-layouts-shocks-1}). As the dependence of $s_R(\hat{v})$ is strictly monotone (by construction), we can define an inverse function, $\hat{v}(s_R)$. Hereinafter, we will consider a function $\hat{s}_L(s_R)=s_L(\hat{v}(s_R))$ defined for~$s_R\in[s^K(u^+),1]$.
\end{remark}
 
    Notice that the solution~\eqref{eq:cs-overcompressive} can be seen formally as one $c$-shock wave, and following the nomenclature in Section~\ref{subsec:conslaw-background}, it corresponds to an overcompressive shock.   
    \item \textbf{2-family Lax shock, fast.}
    
    Take $\hat{s}_L(s_R)$ from Remark~\ref{rmk:overcompressive-curve} and consider the region
    \begin{align*}
    \mathbf{U}_{sc}:=\{(s_L,s_R)\in[0,1]^2: s_R\in [s^K(u^+),1],\;s_L\in[\hat{s}_L(s_R), 1]\}.
    \end{align*}
    For any $(s_L,s_R)\in\mathbf{U}_{sc}$ the following sequence of waves $(sc)$ provides a solution to the Riemann problem:
     \begin{align}
  \label{eq:sc-fast}
    u_L=(s_L,c_L)\xrightarrow{s\text{-wave}} u_M=(s_M,c_L) \xrightarrow{c\text{-shock}} u_R=(s_R,c_R),
\end{align}   
where $u_M=(s_M,c_L)$ is uniquely defined, as it satisfies:
\begin{enumerate}
    \item  Rankine-Hugoniot condition~\eqref{eq:RH-speed} for $u_M$, $u_R$ and the corresponding speed~$v_M$;
    \item $\lambda^s(u_M)<v_M$.
\end{enumerate}
Notice that the $c$-shock from~\eqref{eq:sc-fast} is a 2-family Lax shock.
    \item \textbf{1-family Lax shock, slow.}
    
    Consider the region
    \begin{align*}
    \mathbf{U}_{cs}:=\textrm{closure}([0,1]^2\setminus(\mathbf{U}_{scs}\cup \mathbf{U}_{sc}))\setminus\{s_L=0\}.
    \end{align*}
    For any $(s_L,c_L)\in\mathbf{U}_{cs}$ the following sequence of waves $(cs)$ provides a solution to the Riemann problem:
        \begin{align}
        \label{eq:cs-slow}
    u_L=(s_L,c_L)\xrightarrow{c\text{-shock}} u_M=(s_M,c_R) \xrightarrow{s\text{-wave}} u_R=(s_R,c_R),
\end{align}  
where $u_M=(s_M,c_L)$ is uniquely defined, as it satisfies:
\begin{enumerate}
    \item  Rankine-Hugoniot condition~\eqref{eq:RH-speed} for $u_L$, $u_M$ and corresponding speed~$v_M$;
    \item $\lambda^s(u_M)>v_M$.
\end{enumerate}
Notice that the $c$-shock from~\eqref{eq:cs-slow} is 1-family Lax shock. Moreover, note that if $s_L=0$, the $c$-shock is not (W4)-admissible. It is also physically meaningless to assign chemical concentration when water saturation is zero. 
\end{enumerate}

\subsubsection{Case $c_L<c_R$. }
\label{subsec:RP-rare}
Throughout this section we shall  assume that the values of $c_L$ and $c_R$ are fixed with $c_L<c_R$. Our aim is to prove that for any $s_L, s_R\in[0,1]$ there exists a unique solution to a Riemann problem~\eqref{eq:main_system_chem_flood},~\eqref{eq:Riemann-problem}. Recall that due to Proposition~\ref{prop:JW-monotonicity-in-c}, the solution of a Riemann problem for the case $c_L<c_R$  corresponds to 
a combination of $s$-waves and $c$-rarefaction waves
\begin{align*}
    u_L\xrightarrow{s\text{-wave}} u_1 \xrightarrow{c\text{-rarefaction}} u_2\xrightarrow{s\text{-wave}} \ldots\xrightarrow{c\text{-rarefaction}} u_{k} \xrightarrow{s\text{-wave}} u_R
\end{align*}
for some $k\in\mathbb{N}$. Similarly to the previous case, $c^*\geq c_R>c_L$ and $c^*\leq c_L<c_R$ correspond to the monotone dependence of the fractional flow function $f(s,c)$ on $c$ (see~\cite[Section~6]{JnW} for a full description of the solutions of the Riemann problem when $c^*\geq c_R>c_L$). Therefore, it is sufficient to consider the case $c_L<c^*<c_R$.

\paragraph{Critical value $s_{\mathcal{K}}(u)$ and its properties.}

Following~\cite[Section~6]{JnW}, we adopt the following notation. For the state $u=(s, c)\in[0,1]^2$,  there exists at most one value $s_{\mathcal{K}}(u)\not=s$ such that
\begin{align}
\label{eq:sK-rare}
     \lambda^c(s_{\mathcal{K}}(u),c)=\lambda^c(s,c).
\end{align}
If such $s_{\mathcal{K}}(u)$ exists, we call it the \textit{critical rarefaction value}. 
Geometrically, $s_{\mathcal{K}}(u)$ corresponds to the abscissa of the intersection point of the graph of $f(\cdot,c)$ and the line connecting $(s, f(s, c))$ and $(-a_c(c),0)$. The corresponding state $u_{\mathcal{K}}(u)=(s_{\mathcal{K}}(u),c)$ is called the critical rarefaction state. If $(s,c)\in\mathcal{C}$, it will be convenient to consider $s_{\mathcal{K}}(s,c)=s$. If $\Gamma$ is a $c$-rarefaction curve, then the curve $\Gamma_{\mathcal{K}}$, which consists of all critical states for the $c$-rarefaction curve $\Gamma$, will be called the critical curve.

The following proposition explains why we call $s_{\mathcal{K}}(u)$ critical (this is a trivial generalization of~\cite[Lemma 6.1]{JnW} for $f$ under the conditions (F1)--(F4)).
\begin{proposition}
\label{prop:critical-value-properties}
The following is true:
\begin{itemize}
    \item The two waves 
    \begin{align*}
    u_L\xrightarrow{c} u_M\xrightarrow{s} u_R
    \end{align*}
    are compatible if and only if $u_M\in\Omega_L\cup \mathcal{C}$ and $s_R\in[0,s_{\mathcal{K}}(u_M)]$.
    \item The two waves 
    \begin{align*}
    u_L\xrightarrow{s} u_M\xrightarrow{c} u_R
    \end{align*}
    are compatible if and only if $u_M\in\Omega_R\cup \mathcal{C}$ and $s_L\in[s_{\mathcal{K}}(u_M),1]$.
    \end{itemize}
    \end{proposition}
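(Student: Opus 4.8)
The plan is to reduce compatibility to a single speed inequality at the junction state $u_M$, and then to extract that inequality from the Welge (convex/concave envelope) construction for the scalar flux $f(\cdot,c)$ at the fixed concentration $c$ of $u_M$. Since a $c$-rarefaction is parametrized so that $\lambda^c$ increases monotonically along it (Proposition~\ref{prop:rare}), the wave $u_L\xrightarrow{c}u_M$ attains its \emph{largest} speed $\lambda^c(u_M)$ at its $u_M$-end, whereas the $s$-wave group $u_M\xrightarrow{s}u_R$ --- the entropy solution of the Buckley--Leverett equation $s_t+f(s,c)_x=0$ --- occupies the speed band fixed by the appropriate envelope of $f(\cdot,c)$ between $s_M$ and $s_R$. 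The two waves are compatible by speed precisely when the slowest speed of the $s$-group is at least $\lambda^c(u_M)$ (equality included, corresponding to the boundary case), so the statement reduces to computing that slowest speed and comparing it to $\lambda^c(u_M)$.

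For the first claim I would write $\lambda^c(u_M)$ as the slope of the chord joining $(-a_c(c),0)$ to $(s_M,f(s_M,c))$; by definition $s_{\mathcal K}(u_M)$ is the second intersection of this chord with the graph of $f(\cdot,c)$, and using the S-shape (F3) one checks that $f(\cdot,c)$ lies above the chord exactly on the interval between $s_M$ and $s_{\mathcal K}(u_M)$ and below it outside. Reading off the envelope slope at the $s_M$-end then shows that, for $s_R>s_M$, the slowest speed of the $s$-group is the smaller of $f_s(s_M,c)$ and the secant slope $\frac{f(s_R,c)-f(s_M,c)}{s_R-s_M}$, while for $s_R<s_M$ it is bounded below by $f_s(s_M,c)$. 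Now $u_M\in\Omega_L\cup\mathcal C$ is exactly the inequality $f_s(s_M,c)\geqslant\lambda^c(u_M)$, which settles the first term, and the secant-slope term is $\geqslant\lambda^c(u_M)$ if and only if $(s_R,f(s_R,c))$ lies on or above the chord, i.e.\ $s_R\leqslant s_{\mathcal K}(u_M)$ (vacuously valid, for all $s_R\in[0,1]$, when $s_{\mathcal K}(u_M)$ does not exist). Conversely, when $u_M\in\Omega_R$ the envelope slope at $s_M$ never exceeds $f_s(s_M,c)<\lambda^c(u_M)$, so no choice of $s_R$ yields a compatible pair, which gives the necessity of $u_M\in\Omega_L\cup\mathcal C$.

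The second claim follows by the mirror-image argument: the $s$-group comes first and the $c$-rarefaction now takes its \emph{slowest} speed $\lambda^c(u_M)$ at its $u_M$-end, so compatibility requires the \emph{fastest} speed of the $s$-group to be at most $\lambda^c(u_M)$, which by the same envelope computation becomes $u_M\in\Omega_R\cup\mathcal C$ together with $s_L\geqslant s_{\mathcal K}(u_M)$. Finally I would note that none of these steps use the monotonicity of $f$ in $c$: they rely only on the S-shape in $s$ (F3), on the concavity of $a$ entering through $\lambda^c$ and $s_{\mathcal K}$, and on the monotonicity of $\lambda^c$ along $c$-rarefactions from Proposition~\ref{prop:rare}; this is exactly why the argument of \cite[Lemma~6.1]{JnW} transfers verbatim to fluxes satisfying (F1)--(F4). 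The main point to get right is the envelope bookkeeping at the junction --- deciding whether $f_s(s_M,c)$ or the secant slope to $s_R$ (respectively $s_L$) is the binding quantity --- which is precisely where the position of $s_R$ (respectively $s_L$) relative to $s_{\mathcal K}(u_M)$, and hence the sign of $f(\cdot,c)$ minus the chord, enters.
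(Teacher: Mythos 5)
The paper itself gives no proof here --- it invokes \cite[Lemma 6.1]{JnW} and asserts the generalization to (F1)--(F4) is trivial --- and your reconstruction (reduce compatibility to a single speed comparison at $u_M$, then read the relevant envelope slope of $f(\cdot,c)$ off the chord through $(-a_c(c),0)$ and $(s_M,f(s_M,c))$) is exactly the argument behind that cited lemma, so the strategy is the right one. However, one step is wrong as written. For $s_R<s_M$ the slowest speed of the $s$-group is the derivative at $s_M$ of the \emph{concave} envelope $g$ of $f(\cdot,c)$ on $[s_R,s_M]$, and since $g\geqslant f$ with $g(s_M)=f(s_M)$, one always has $g'(s_M-)\leqslant f_s(s_M,c)$ --- bounded \emph{above} by the tangent slope, not below as you claim. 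The inequality genuinely fails in a relevant regime: if $s_M$ lies in the convex region $(0,s^I(c))$ (which is contained in $\Omega_L$, so this occurs under the hypotheses of the first bullet), the concave envelope is the single chord and the $s$-wave is one shock of speed strictly less than $f_s(s_M,c)$. Note also that your necessity step for $u_M\in\Omega_R$ correctly uses the opposite bound (``the envelope slope at $s_M$ never exceeds $f_s(s_M,c)$''), so the two halves of your write-up assert contradictory inequalities for the same quantity.

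The fix is to compare with the chord line rather than the tangent, which is the comparison your own setup already provides. Let $\ell(s)=\lambda^c(u_M)\,(s+a_c(c))$ be the line through $(-a_c(c),0)$ and $(s_M,f(s_M,c))$. When $u_M\in\Omega_L\cup\mathcal C$ you have $s_{\mathcal K}(u_M)\geqslant s_M$, so by your sign analysis $f(\cdot,c)\leqslant\ell$ on $[0,s_M]$; since $\ell$ is concave (linear) and dominates $f$ there, the concave envelope satisfies $g\leqslant\ell$ on $[s_R,s_M]$ with equality at $s_M$, whence $g'(s_M-)\geqslant\lambda^c(u_M)$ and the junction inequality holds for every $s_R<s_M$ (all such $s_R$ lie below $s_{\mathcal K}(u_M)$, consistent with the statement). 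The mirror comparison ($\ell\leqslant g$ for the convex envelope when $s_L\in[s_{\mathcal K}(u_M),s_M]$, giving $g'(s_M-)\leqslant\lambda^c(u_M)$) repairs the corresponding branch of the second bullet. With these replacements your proof is complete; the remaining steps, including the $\min\bigl(f_s(s_M,c),\,\text{secant}\bigr)$ formula for the convex-envelope slope in the $s_R>s_M$ case and the observation that nothing uses monotonicity of $f$ in $c$, are correct.
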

\begin{remark}
    For the monotone case considered in~\cite{JnW}, Proposition~\ref{prop:critical-value-properties} implies that any solution to a Riemann problem contains at most two $c$-rarefaction waves. Note that this is not true anymore for the non-monotone case under the assumption (F4). Indeed, there exists a solution that is composed of three rarefaction waves (for more details, see Fig.~\ref{fig:RP-layouts-rare-5}c).    
\end{remark}

The Proposition~\ref{prop:sK-basic-properties} follows immediately from the assumptions~(F1)--(F3) for the fractional flow function $f$ and the definition of the critical value $s_{\mathcal{K}}(u)$.
\begin{proposition} 
\label{prop:sK-basic-properties}
Fix the state $u=(s,c)\in\Omega_L\cup\Omega_R$. The following properties hold:
    \begin{itemize}
        \item if $u\in\Omega_L$ and $s_{\mathcal{K}}(u)$ exist, then $u_{\mathcal{K}}=(s_{\mathcal{K}}(u),c)\in\Omega_R$. Moreover, $s_{\mathcal{K}}(u)>s$, and the mapping $s\mapsto s_{\mathcal{K}}(u)$ is continuous and strictly decreasing.
        
        \item if $u\in\Omega_R$, then $s_{\mathcal{K}}(u)$ exists and $u_{\mathcal{K}}=(s_{\mathcal{K}}(u),c)\in\Omega_L$. Moreover, $s_{\mathcal{K}}(u)<s$, and the mapping $s\mapsto s_{\mathcal{K}}(u)$ is continuous and strictly decreasing.
        
        \item if $s_{\mathcal{K}}(u)$ exists, then $u_{\mathcal{KK}}=(s_{\mathcal{K}}(u_{\mathcal{K}}),c)$ also exists, and $u_{\mathcal{KK}}=u$.
    \end{itemize}
\end{proposition}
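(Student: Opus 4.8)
The plan is to fix the concentration $c$ and reduce everything to the single-variable analysis of the secant-slope function
\[
\phi(s) := \lambda^c(s,c) = \frac{f(s,c)}{s + a_c(c)},
\]
whose graph geometry encodes the critical value: by definition $s_{\mathcal{K}}(u)$ is the unique $s'\neq s$ with $\phi(s')=\phi(s)$. First I would differentiate, obtaining
\[
\phi'(s) = \frac{f_s(s,c)\,(s+a_c(c)) - f(s,c)}{(s+a_c(c))^2} = \frac{\lambda^s(s,c)-\lambda^c(s,c)}{s+a_c(c)},
\]
so that, since $s+a_c(c)>0$ by (A2), the sign of $\phi'$ coincides with the sign of $\lambda^s-\lambda^c$. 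Thus $\phi$ is increasing on the part of the fibre lying in $\Omega_L$ and decreasing on the part lying in $\Omega_R$.

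Next I would pin down the shape of $\phi$ on $[0,1]$ using the $S$-shape (F3). Introducing the auxiliary function $\psi(s):=f_s(s,c)(s+a_c(c))-f(s,c)$, whose sign governs that of $\phi'$ and of $\lambda^s-\lambda^c$, a direct computation gives $\psi'(s)=f_{ss}(s,c)(s+a_c(c))$. Hence $\psi$ is increasing on $(0,s^I)$ and decreasing on $(s^I,1)$; combined with $\psi(0)=0$ and $\psi(1)=-1$ (from (F1), (F2)), this forces $\psi>0$ on $(0,s_{\mathcal{C}})$ and $\psi<0$ on $(s_{\mathcal{C}},1)$ for a unique $s_{\mathcal{C}}=s_{\mathcal{C}}(c)\in(s^I,1)$. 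Consequently the fibre meets $\mathcal{C}$ in the single point $s_{\mathcal{C}}$, we have $\Omega_L\cap\{c\}=(0,s_{\mathcal{C}})$ and $\Omega_R\cap\{c\}=(s_{\mathcal{C}},1)$, and $\phi$ is \emph{unimodal}: strictly increasing on $[0,s_{\mathcal{C}}]$ from $\phi(0)=0$ to its maximum $\phi(s_{\mathcal{C}})$, then strictly decreasing on $[s_{\mathcal{C}},1]$ down to $\phi(1)=1/(1+a_c(c))$.

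With unimodality in hand the three bullets follow from a range comparison of the two monotone branches. The increasing branch attains all values in $[0,\phi(s_{\mathcal{C}})]$ and the decreasing branch all values in $[1/(1+a_c(c)),\phi(s_{\mathcal{C}})]$, so $\phi(s')=\phi(s)$ has a second solution precisely when $\phi(s)>1/(1+a_c(c))$. For $u\in\Omega_R$ (i.e.\ $s\in(s_{\mathcal{C}},1)$) this always holds, the partner lies on the increasing branch, so $s_{\mathcal{K}}(u)$ exists with $s_{\mathcal{K}}(u)<s_{\mathcal{C}}<s$ and $u_{\mathcal{K}}\in\Omega_L$; for $u\in\Omega_L$ the partner, when it exists, lies on the decreasing branch, giving $s_{\mathcal{K}}(u)>s_{\mathcal{C}}>s$ and $u_{\mathcal{K}}\in\Omega_R$. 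Writing $s_{\mathcal{K}}$ as the composition of $\phi$ with the inverse of $\phi$ on the opposite branch expresses it as a composition of one strictly increasing and one strictly decreasing homeomorphism, hence a continuous strictly decreasing map of $s$; continuity and strict monotonicity of the branch inverses come from $\phi'\neq 0$ on the open branches via the inverse function theorem. Finally the involution $u_{\mathcal{KK}}=u$ is immediate, since $\phi(s_{\mathcal{K}})=\phi(s)$ and, by unimodality, $s$ is the unique partner of $s_{\mathcal{K}}$, so applying the construction twice returns $s$.

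The main obstacle I anticipate is not any single step but the careful bookkeeping at the boundary of the existence region inside $\Omega_L$ --- identifying exactly the threshold $\phi(s)=1/(1+a_c(c))=\phi(1)$ below which no partner exists --- and making sure the endpoint values $\phi(0)=0$ and $\phi(1)=1/(1+a_c(c))$ are treated so that the ranges of the two branches overlap in precisely the claimed interval, which is what makes the existence/non-existence dichotomy and the sign of $s_{\mathcal{K}}(u)-s$ come out correctly.
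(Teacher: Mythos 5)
Your proof is correct, and it fills in what the paper leaves unproved: the paper simply asserts that the proposition ``follows immediately from the assumptions (F1)--(F3) and the definition of the critical value,'' so your unimodality analysis of $\phi(s)=\lambda^c(s,c)$ --- sign of $\phi'$ governed by $\lambda^s-\lambda^c$, the auxiliary function $\psi$ with $\psi(0)=0$, $\psi(1)=-1$ and $\psi'=f_{ss}\,(s+a_c)$ forcing a single crossing $s_{\mathcal{C}}(c)$ of the coincidence locus on each fibre, then the range comparison of the two strictly monotone branches --- is exactly the intended geometric argument (the chord through $(-a_c(c),0)$) made rigorous. One boundary nit: the partner on the decreasing branch exists already when $\phi(s)\geq\phi(1)=1/(1+a_c(c))$, with $s_{\mathcal{K}}(u)=1$ at equality (the paper does allow $s_{\mathcal{K}}=1$, cf.\ its hypothesis ``$s_{\mathcal{K}}(u_0)<1$ exists'' in Proposition~\ref{prop:crit} and the curve $\Gamma_{0\mathcal{K}}$ built from $s\equiv1$), so your strict inequality ``$\phi(s)>1/(1+a_c(c))$'' should be non-strict; this does not affect any of the three bullets, whose proofs go through verbatim.
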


\begin{proposition}
\label{prop:crit}        
    Fix a point $u_0 = (s_0,c_0)\in\Omega_L\cup\Omega_R$, such that $s_0, s_{\mathcal{K}}(u_0)<1$ exists, and consider  three curves (here $\delta>0$ is some small number such that the curves are well-defined in the $\delta$-neighborhood):
    \begin{itemize}
        \item the $c$-rarefaction curve $\Gamma$ (parametrized as in Proposition~\ref{prop:rare}) that passes through the point  $(s_0,c_0)$:
\begin{align*}
\Gamma=\{u(c)=(s(c),c):  c\in[c_0-\delta,c_0+\delta] \text{ and }s(c_0)=s_0\};
\end{align*}
\item the critical curve $\Gamma_{\mathcal{K}}$ consisting of all critical states for the $c$-rarefaction curve~$\Gamma$:
\begin{align*}
\Gamma_{\mathcal{K}}=\{(s_{\mathcal{K}}(c),c): c\in[c_0-\delta,c_0+\delta],
\text{ where }s_{\mathcal{K}}(c)= s_{\mathcal{K}}(u(c)), u(c)\in\Gamma\};
\end{align*}
\item  the $c$-rarefaction curve that passes through the critical state  $u_{\mathcal{K}}(u_0)\in\Gamma_{\mathcal{K}}$.
\begin{align*}
\widetilde{\Gamma}=\{(\tilde{s}(c),c): 
c\in[c_0-\delta,c_0+\delta]
\text{ and }\tilde{s}(c_0)=s_{\mathcal{K}}(u_0)\}.    
\end{align*}
\end{itemize}
    Then $s_{\mathcal{K}}(c)\in \mathcal C^1[c_0-\delta,c_0+\delta]$ and  we have
    \begin{align}
    \label{eq:angles}
       \frac{d}{dc}s_{\mathcal{K}}(c_0)<\frac{d}{dc}\tilde{s}(c_0).
    \end{align}
\end{proposition}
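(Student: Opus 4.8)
The goal is to compare the slopes at $c_0$ of two curves through the critical point $u_\mathcal{K}(u_0)$: the critical curve $\Gamma_\mathcal{K}$ (the image of $\Gamma$ under the map $u \mapsto u_\mathcal{K}(u)$) and the genuine $c$-rarefaction curve $\widetilde\Gamma$ passing through the same point. Let me think carefully about how these two slopes differ.

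Let me sketch the computation.

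The plan is to compute both derivatives explicitly at $c_0$ and compare them. For the rarefaction curve $\widetilde\Gamma$ I already have the ODE from Proposition~\ref{prop:rare}, namely $(\lambda^c - \lambda^s)\,\tilde s'(c) = f_c$, so $\tilde s'(c_0) = f_c(u_\mathcal{K})/(\lambda^c - \lambda^s)(u_\mathcal{K})$, evaluated at the critical state $u_\mathcal{K}=u_\mathcal{K}(u_0)$. For the critical curve I would differentiate the defining relation $\lambda^c(s_\mathcal{K}(c), c) = \lambda^c(s(c), c)$ implicitly in $c$ along $\Gamma$. Writing $\lambda^c = f/(s + a_c)$, this gives a linear equation for $s_\mathcal{K}'(c_0)$ in terms of $s'(c_0)$, the partials of $\lambda^c$, and the values at $u_0$ and $u_\mathcal{K}$. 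The regularity claim $s_\mathcal{K} \in \mathcal C^1$ follows from the implicit function theorem applied to $\lambda^c(\,\cdot\,, c) = \text{const}$, whose solvability is exactly the nondegeneracy encoded in $u_0, u_\mathcal{K} \in \Omega_L \cup \Omega_R$ (i.e.\ $\lambda^c_s \ne 0$ at these points, which holds off the coincidence locus by Proposition~\ref{prop:sK-basic-properties}).

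First I would set up coordinates cleanly: abbreviate $\ell(s,c) = \lambda^c(s,c) = f(s,c)/(s + a_c(c))$ and note $\ell_s = (f_s(s+a_c) - f)/(s+a_c)^2 = (\lambda^s - \lambda^c)/(s+a_c)$, which vanishes precisely on $\mathcal C$ and is strictly negative on $\Omega_R$, strictly positive on $\Omega_L$. Differentiating the critical relation $\ell(s_\mathcal{K}(c),c) = \ell(s(c),c)$ yields
\[
\ell_s(u_\mathcal{K})\, s_\mathcal{K}'(c_0) + \ell_c(u_\mathcal{K}) = \ell_s(u_0)\, s'(c_0) + \ell_c(u_0),
\]
and since both $u_0,u_\mathcal{K}$ lie on the \emph{same} constant-$\lambda^c$ relation I can use $s'(c_0) = f_c(u_0)/(\lambda^c - \lambda^s)(u_0)$ from the rarefaction ODE to express everything in closed form. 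The inequality \eqref{eq:angles} then reduces to an algebraic comparison between $s_\mathcal{K}'(c_0)$ and $\tilde s'(c_0) = f_c(u_\mathcal{K})/(\lambda^c-\lambda^s)(u_\mathcal{K})$.

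The main obstacle I anticipate is the sign analysis at the end. After clearing denominators, the difference $\tilde s'(c_0) - s_\mathcal{K}'(c_0)$ becomes a rational expression whose numerator mixes first and second derivatives of $f$ at two different points $u_0$ and $u_\mathcal{K}$; establishing its sign is not a one-line monotonicity argument. The key structural facts I expect to need are: the sign of $\ell_s$ flips between $u_0$ and $u_\mathcal{K}$ (Proposition~\ref{prop:sK-basic-properties} says $u_0 \in \Omega_L \Leftrightarrow u_\mathcal{K}\in\Omega_R$), the S-shape (F3) controlling $f_{ss}$, and the monotone behavior of $\frac{d}{dc}\lambda^c = -a_{cc} f/(s+a_c)^2 > 0$ established in Proposition~\ref{prop:rare}. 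Geometrically the inequality says the rarefaction curve crosses the critical curve transversally with a definite orientation at $u_\mathcal{K}$, which is the mechanism guaranteeing that $s$-waves attaching at critical states can be composed compatibly by speed; I would cross-check the final sign against this geometric picture (and against the convexity interpretation of $s_\mathcal{K}$ as the second intersection of $f(\cdot,c)$ with the chord to $(-a_c(c),0)$, see Fig.~\ref{fig:sK}) to be sure the inequality is strict rather than reversed.
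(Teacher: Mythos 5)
Your plan follows the same route as the paper's proof: implicit differentiation of the defining relation $\lambda^c(s_{\mathcal K}(c),c)=\lambda^c(s(c),c)$ (with $\mathcal C^1$ regularity via the implicit function theorem, since $\lambda^c_s=(\lambda^s-\lambda^c)/(s+a_c)\neq 0$ off the coincidence locus), combined with the rarefaction ODE $(\lambda^c-\lambda^s)\,\tilde s'=f_c$ for $\widetilde\Gamma$. However, there is a genuine gap: the inequality \eqref{eq:angles} --- the entire content of the proposition --- is never actually established. You defer it as ``an algebraic comparison'' whose sign analysis you anticipate to be hard, mixing ``first and second derivatives of $f$ at two different points'' and requiring the S-shape (F3). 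That anticipation is incorrect, and since you leave open even whether the inequality might be reversed, the proof cannot be completed from your outline as written.

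The missing step --- exactly how the paper closes the argument --- is to evaluate the right-hand side of your differentiated relation using the identity from Proposition~\ref{prop:rare} that you cite as a ``structural fact'' but never deploy: along $\Gamma$,
\[
\frac{d}{dc}\lambda^{c}(s(c),c)=-\frac{a_{cc}(c)\, f(s(c),c)}{(s(c)+a_c(c))^2},
\]
which contains no second derivatives of $f$ (nor does $\lambda^c_c$, which involves only $f_c$ and $a_{cc}$; if you instead substitute $s'(c_0)=f_c(u_0)/(\lambda^c-\lambda^s)(u_0)$ as you propose, the terms involving $f_c(u_0)$ cancel and you recover the same expression). Combining this with $\lambda^c(u_0)=\lambda^c(u_{\mathcal K}(u_0))$, your linear equation for $s_{\mathcal K}'(c_0)$ collapses to the paper's relation \eqref{eq:g1}: at $u_{\mathcal K}(u_0)$,
\[
(\lambda^c-\lambda^s)\,\frac{ds_{\mathcal K}}{dc}=f_c+g,
\qquad
g=\lambda^c\, a_{cc}(c_0)\,\frac{s_{\mathcal K}(u_0)-s_0}{s_0+a_c(c_0)},
\]
so that $s_{\mathcal K}'(c_0)-\tilde s'(c_0)=g/(\lambda^c-\lambda^s)(u_{\mathcal K})$, a single adsorption-curvature term as in \eqref{eq:g2}. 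The sign is then a one-liner from (A3) and Proposition~\ref{prop:sK-basic-properties}: if $u_0\in\Omega_L$, then $s_{\mathcal K}(u_0)>s_0$ gives $g<0$ while $u_{\mathcal K}\in\Omega_R$ gives $(\lambda^c-\lambda^s)(u_{\mathcal K})>0$; if $u_0\in\Omega_R$, both signs flip; in either case the ratio is strictly negative, which is \eqref{eq:angles}. In particular, (F3), $f_{ss}$ and $f_{cc}$ play no role here: the transversality of $\Gamma_{\mathcal K}$ and $\widetilde\Gamma$ is driven purely by the strict concavity of the adsorption isotherm --- consistent with the paper's later remark that for $a\equiv 0$ one has $g\equiv 0$ and the proposition fails, with critical curves becoming contact rarefaction curves.
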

\begin{proof}
    The following relations hold at $u=u_{\mathcal{K}}(u_0)$: 
    \begin{align}
    \label{eq:g1}
        (\lambda^c -\lambda^s)\cdot \frac{ds_{\mathcal{K}}}{dc}=f_c+g,\qquad (\lambda^c-\lambda^s)\cdot \frac{d\tilde{s}}{dc}=f_c,
    \end{align}
    where 
    \begin{align}
        \label{eq:g2}
    g(u)=\lambda^c(u)\cdot a_{cc}(c)\cdot \frac{(s_{\mathcal{K}}(s)-s)}{s+a_{c}(c)}.
    \end{align}
    Note that $g(u_0) < 0$, and the statement of this proposition follows immediately.
\end{proof}

\paragraph{Notation for important points and curves.}In order to describe the structure of solutions to a Riemann problem, we need to introduce some notation. See Figure~\ref{fig:important-points} for an illustration for these new notations.

The four rarefaction curves $s_i(c)$, $i=1,\ldots,4$, which intersect at the fixed point $(s^*,c^*)$ (see Proposition~\ref{prop:rare}), play an important role in the analysis of solutions of Riemann problems. Denote 
\begin{align}
\label{eq:s1L-s2L-s3R-s4R}
    s_{1L}:=s_1(c_L);\qquad s_{2L}:=s_2(c_L);\qquad
    s_{3R}:=s_3(c_R);\qquad
    s_{4R}:=s_4(c_R).
\end{align}
Consider the critical rarefaction values that correspond to the points $s_{1L}, s_{2L}, s_{3R}$:
\begin{align}
\label{eq:critical-s1K}
s_{1K}:&=s_{\mathcal{K}}(s_{1L},c_L);
\qquad     
s_{2K}:=s_{\mathcal{K}}(s_{2L},c_L);
\qquad
s_{3K}:=s_{\mathcal{K}}(s_{3R},c_R).
\end{align}
If $s_{1K}$ does not exist, the analysis of the solutions to the Riemann problem is easier, as there are fewer cases to consider. Thus, we will assume that there exists $s_{1K}<1$. Note also that Lemma~\ref{lm:critical-values-inequalities} provides the existence of $s_{3K}<1$. Consider the $c$-rarefaction curve $\Gamma_0$ defined by $s_0(c)\equiv1$, $c\in[0,1]$ and its critical curve $\Gamma_{0\mathcal{K}}$, parametrized by $s_{0\mathcal{K}}(c)$, $c\in[0,1]$. Denote
\begin{align}
\label{eq:s0L}
    s_{0L}=s_{0\mathcal{K}}(c_L),\qquad s_{0R}=s_{0\mathcal{K}}(c_R).
\end{align}
There exists a unique $s_{0K}$  such that 
\begin{align}
\label{eq:s0K}
    (s_{0K},c_L)\xrightarrow{c\text{-rare}}(s_{0R},c_R).
\end{align}

\begin{figure}[h]
    \centering\includegraphics[width=0.5\linewidth]{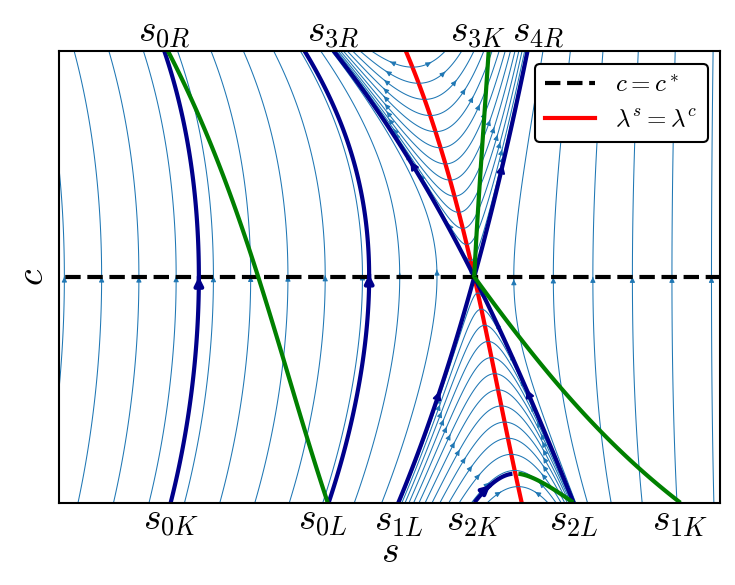}
    \caption{The illustration of the new notations. Note that the points are arranged according to inequalities~\eqref{eq:cL}, \eqref{eq:cR} from Lemma~\ref{lm:critical-values-inequalities}.}
    \label{fig:important-points}
\end{figure}

\begin{lemma} 
\label{lm:critical-values-inequalities}
The following inequalities hold:
\begin{align}
\label{eq:cL}
    s_{0K}<s_{0L}<s_{1L}<s_{2K}<s_{2L}<s_{1K}\quad \text{(abscissas of the states with $c=c_L$)},
    \\
\label{eq:cR}
    s_{0R}<s_{3R}<s_{3K}<s_{4R} < 1\quad \text{(abscissas of the states with $c=c_R$)}.
\end{align}
\end{lemma}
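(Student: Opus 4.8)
The plan is to recast every inequality in \eqref{eq:cL}--\eqref{eq:cR} as a comparison of the characteristic speed $\lambda^{c}$, which is the natural invariant here. Two facts drive the reduction. First, by the defining relation \eqref{eq:sK-rare} a state $(s,c)$ and its critical partner $(s_{\mathcal{K}}(u),c)$ carry the \emph{same} value of $\lambda^{c}$. Second, since $\partial_{s}\lambda^{c}$ has the sign of $\lambda^{s}-\lambda^{c}$, the map $s\mapsto\lambda^{c}(s,c)$ is strictly increasing on the $\Omega_{L}$-part (smaller $s$) of each horizontal line and strictly decreasing on the $\Omega_{R}$-part (larger $s$), the two parts being separated by the single point where the line meets $\mathcal{C}$. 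Hence, for two states at the same level $c$, ordering in $s$ is equivalent to ordering in $\lambda^{c}$ (direct on $\Omega_{L}$, reversed on $\Omega_{R}$); moreover the two ``junction'' inequalities $s_{2K}<s_{2L}$ and $s_{3R}<s_{3K}$ are automatic, because $s_{\mathcal{K}}$ sends $\Omega_{L}$-states to the right and $\Omega_{R}$-states to the left (Proposition~\ref{prop:sK-basic-properties}). Writing $\Lambda_{0}=\lambda^{c}(1,c_{L})$, $\Lambda_{0K}=\lambda^{c}(s_{0K},c_{L})$, $\Lambda_{i}=\lambda^{c}(s_{iL},c_{L})$ and $M_{0}=\lambda^{c}(1,c_{R})$, $M_{i}=\lambda^{c}(s_{iR},c_{R})$, the chains \eqref{eq:cL}, \eqref{eq:cR} collapse to
\[
\Lambda_{0K}<\Lambda_{0}<\Lambda_{1}<\Lambda_{2}\qquad\text{and}\qquad M_{0}<M_{4}<M_{3}.
\]

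Two of these are immediate. For $\Lambda_{0}<\Lambda_{1}$ I use the standing assumption $s_{1K}<1$: since $s_{1K}=s_{\mathcal{K}}(s_{1L},c_{L})$ and $1$ both lie in $\Omega_{R}$, the decrease of $\lambda^{c}$ on $\Omega_{R}$ turns $s_{1K}<1$ into $\Lambda_{1}=\lambda^{c}(s_{1K})>\lambda^{c}(1)=\Lambda_{0}$. For $M_{0}<M_{4}$ I use that $\Gamma_{0}=\{s=1\}$ is itself a $c$-rarefaction curve (Remark~\ref{rmk:rare-s=0-s=1}); by uniqueness of integral curves $\Gamma_{4}$ cannot meet it, so $s_{4R}<1$, and $\Omega_{R}$-monotonicity gives $M_{4}>M_{0}$.

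For $\Lambda_{0K}<\Lambda_{0}$, equivalently $s_{0K}<s_{0L}$ (both in $\Omega_{L}$), I compare the critical curve $\Gamma_{0\mathcal{K}}$ of $\{s=1\}$ with the $c$-rarefaction curve $\widetilde{\Gamma}$ through $(s_{0R},c_{R})$. Both pass through $(s_{0R},c_{R})$, and the slope computation in the proof of Proposition~\ref{prop:crit} (applied to the degenerate curve $\{s=1\}$, whose critical partner at $c_{R}$ is $(s_{0R},c_{R})$) shows that there $\Gamma_{0\mathcal{K}}$ has strictly smaller slope than $\widetilde{\Gamma}$. Therefore for $c<c_{R}$ the critical curve lies above the rarefaction curve, and this ordering persists down to $c_{L}$: at any first crossing the two curves would again share a point, and the same slope inequality would forbid the crossing. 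Evaluating at $c_{L}$ yields $s_{0L}>s_{0K}$.

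The crux is the remaining pair $\Lambda_{1}<\Lambda_{2}$ and $M_{4}<M_{3}$. These are two instances of a single statement: along the separatrix $\Gamma_{1}\cup\Gamma_{4}$ the value of $\lambda^{c}$ is strictly smaller than along $\Gamma_{2}\cup\Gamma_{3}$ at every level $c\neq c^{*}$, the two agreeing only at the saddle $(s^{*},c^{*})$. I would establish this by a phase-plane argument. The local seed comes from the identity $\frac{d}{dc}\lambda^{c}=-a_{cc}\,\lambda^{c}/(s+a_{c})$ from the proof of Proposition~\ref{prop:rare}: the linearization at the saddle, which uses $f_{ss}<0$ and the assumption $f_{cc}(s^{*},c^{*})>0$ from (F4), identifies the branch of $\Gamma_{1}\cup\Gamma_{4}$ leaving the saddle toward \emph{smaller} $s$; having the smaller denominator $s+a_{c}$, it changes $\lambda^{c}$ faster and so falls below $\Gamma_{2}\cup\Gamma_{3}$ in $\lambda^{c}$ just off the saddle. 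This inequality is then propagated to $c_{L}$ and $c_{R}$ by the same non-crossing mechanism as above, re-deriving the slope comparison of Proposition~\ref{prop:crit} at any putative crossing. I expect this saddle analysis to be the main obstacle, precisely because $(s^{*},c^{*})$ is a degenerate coincidence point at which the $\Omega_{L}/\Omega_{R}$ dichotomy and the map $s_{\mathcal{K}}$ break down (indeed the $s$-ordering of the two separatrices flips across $c^{*}$), so the comparison cannot be read off from $\lambda^{c}$-monotonicity and must be extracted from the local eigenstructure.
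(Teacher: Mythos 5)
Your route is genuinely different from the paper's, and mostly sound. The paper reduces \eqref{eq:cL} to the three inequalities $s_{0K}<s_{0L}$, $s_{1L}<s_{2K}$, $s_{2L}<s_{1K}$ and proves $s_{1L}<s_{2K}$ by contradiction: assuming it fails, it builds the critical curve of the $c$-rarefaction curve through $(s_{2K},c_L)$, seeds the ordering at $c=c_L$ via Proposition~\ref{prop:crit} (the critical state of $(s_{2K},c_L)$ is exactly $(s_{2L},c_L)\in\Gamma_2$), and derives a forbidden left-to-right crossing with $\Gamma_2$, with a separate equality case; the remaining inequalities and \eqref{eq:cR} are handled ``similarly''. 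Your $\lambda^c$-level reduction is more economical: it correctly identifies $s_{1L}<s_{2K}$ and $s_{2L}<s_{1K}$ as the \emph{same} statement $\Lambda_1<\Lambda_2$ (and likewise merges most of \eqref{eq:cR} into $M_0<M_4<M_3$), it is direct rather than by contradiction, and your non-crossing propagation is in substance the paper's mechanism, since a crossing of $\lambda^c$-levels at height $c'$ is precisely an intersection of $\Gamma_{1\mathcal K}$ with $\Gamma_2$, where Proposition~\ref{prop:crit} applies. Your treatments of $\Lambda_0<\Lambda_1$ (from $s_{1K}<1$), of $s_{4R}<1$ (uniqueness of integral curves against the orbit $\{s=1\}$, which is noncritical since $r^c=(0,-\lambda^c)\neq0$ there), and of $\Lambda_{0K}<\Lambda_0$ are all fine, though note that invoking Proposition~\ref{prop:crit} at $u_0=(1,c_R)$ is formally outside its hypothesis $s_0<1$; the computation \eqref{eq:g1}--\eqref{eq:g2} does extend, with $g=\lambda^c a_{cc}(1-s_{0R})/(s_{0R}+a_c)<0$, but you should say so.

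The one genuine gap is the seed at the saddle, exactly where you expected it. As stated, the first-order mechanism fails: at $c=c^*$ the two separatrices pass through the \emph{same} point $(s^*,c^*)$, so along both curves $\lambda^c$ and its first $c$-derivative $-a_{cc}\lambda^c/(s+a_c)$ coincide there, and ``smaller denominator'' gives no strict inequality at any fixed $c$ near $c^*$ without a quantitative second-order argument using the distinct eigendirections $s_1'(c^*)=-f_{cc}/\mu_->0>-f_{cc}/\mu_+=s_2'(c^*)$. However, your heuristic is correct in integrated form, and the fix is short: writing $g_i(c)=\lambda^c(s_i(c),c)$, Proposition~\ref{prop:rare} gives $(\ln g_i)'=-a_{cc}/(s_i+a_c)$, and since $g_1(c^*)=g_2(c^*)=\lambda^c(s^*,c^*)>0$ while $s_1(c)<s_2(c)$ strictly for $c<c^*$ (the curves lie in $\Omega_L$ and $\Omega_R$ respectively and meet only at the saddle),
\[
\ln g_1(c)-\ln g_2(c)=\int_c^{c^*}\bigl(-a_{cc}(\tau)\bigr)\left[\frac{1}{s_2(\tau)+a_c(\tau)}-\frac{1}{s_1(\tau)+a_c(\tau)}\right]d\tau<0
\]
for every $c\in[c_L,c^*)$, and symmetrically $g_3>g_4$ on $(c^*,c_R]$. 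This proves $\Lambda_1<\Lambda_2$ and $M_4<M_3$ at \emph{all} levels in one stroke, making both your saddle eigenstructure analysis and the non-crossing propagation for the crux pair unnecessary --- which also removes a hidden prerequisite of your propagation step, namely that the critical curve $\Gamma_{1\mathcal K}$ exists (stays below $s=1$) on all of $[c_L,c^*]$, a point you did not address. With this repair your argument is complete and arguably cleaner than the paper's case analysis.
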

\begin{proof}
    First, let us  prove inequalities~\eqref{eq:cL}. Due to Proposition~\ref{prop:sK-basic-properties}, we get that the assumption $s_{1K}<1$ implies
$s_{0L}<s_{1L}$. In addition, $(s_{2L},c_L)\in\Omega_R$ implies $(s_{2K},c_L)\in\Omega_L$, so $s_{2K}<s_{2L}$. So, it is enough to prove:
\begin{align*}
    s_{0K}<s_{0L}; \qquad s_{1L}<s_{2K};\qquad s_{2L}<s_{1K}.
\end{align*}

Let us prove $s_{1L}<s_{2K}$. The other two inequalities are proved in a similar way. The scheme of the proof is simple: we assume the opposite inequality, and get a contradiction with Proposition~\ref{prop:crit}.
    
Assume first that $s_{1L}>s_{2K}$ (the case of equality is considered below). Consider the rarefaction curve $\Gamma=\{(s(c),c)\}$ that passes through the state $(s_{2K},c_L)$. By Remark~\ref{rm:rare}, $\Gamma\subset\Omega_L$ and is defined for all $c\in[0,1]$.  Consider a curve $\Gamma_{\mathcal{K}}=\{(s_{\mathcal{K}}(c),c)\}\subset\Omega_R$, a critical curve to $\Gamma$. Either $\Gamma_{\mathcal{K}}$ is defined for all $c\in[0,1]$ or there exists some $\tilde{c}$ such that $s_{\mathcal{K}}(\tilde{c})=1$. In both cases, we claim that there exists at least one point of  intersection of the curves $\Gamma_{\mathcal{K}}$ and $\Gamma_2$ (defined in Proposition~\ref{prop:rare}) where the inequality~\eqref{eq:angles} is violated. Indeed, Proposition~\ref{prop:crit} implies that for $c_0$ sufficiently close to $c_L$, we have $s_{\mathcal{K}}(c_0)<s_2(c_0)$. If $\Gamma_{\mathcal{K}}$ is defined for all $c\in[0,1]$, then for $c=c^*$, obviously, $s_{\mathcal{K}}(c^*)>s_2(c^*)=s^*$. As the curves $\Gamma_{\mathcal{K}}$ and $\Gamma_2$ are both continuous, there exists at least one intersection point where $\Gamma_{\mathcal{K}}$ crosses $\Gamma_2$ from left to right, and therefore the inequality~\eqref{eq:angles} is violated at this point. For the second case, $\Gamma_{\mathcal{K}}$ connects two states $(s_{\mathcal{K}}(c_0),c_0)$ and $(1,\tilde{c})$ on different sides of $\Gamma_2$, thus, by continuity,  $\Gamma_{\mathcal{K}}$ and $\Gamma_2$ similarly intersect.

Assume now the equality case $s_{1L}=s_{2K}$. Consider $\Gamma_{\mathcal{K}}=\{(s_{\mathcal{K}}(c),c)\}\subset\Omega_L$, the critical curve to the rarefaction curve~$\Gamma_2=\{(s_{2}(c),c)\}\subset\Omega_R$. 
Note that \mbox{$(s_2(c^*),c^*)=(s^*,c^*)\in\mathcal{C}$}, thus \mbox{$(s_{\mathcal{K}}(c^*),c^*)=(s^*,c^*)\in\mathcal{C}$}; in particular \mbox{$s_{\mathcal{K}}(c^*)=s^*$}. Moreover, similarly to the previous case, Proposition~\ref{prop:crit} implies that there exists $c_0>c_L$, sufficiently close to $c_L$, such that $s_{\mathcal{K}}(c_0)<s_1(c_0)$. Take any point between $(s_{\mathcal{K}}(c_0), c_0)$ and $(s_1(c_0), c_0)$, for example $(s_0, c_0)$ with $s_0:=\frac{1}{2}(s_{\mathcal{K}}(c_0)+s_1(c_0))$, $s_{\mathcal{K}}(c_0) <s_0<s_1(c_0)$, and consider the rarefaction curve~$\Gamma_0=\{(s(c),c)\}$ that passes through the point $(s_0,c_0)$. Due to Remark~\ref{rm:rare}, $\Gamma_0\in\Omega_L$ and intersects $c=c^*$ at some point $(\tilde{s},c^*)$ with $\tilde{s}<s^*$. Thus, $\Gamma_{\mathcal{K}}$ and $\Gamma_0$ intersect and, by construction, inequality~\eqref{eq:angles} is violated at the point of intersection.   

Inequalities~\eqref{eq:cR} are proved in a similar way. We omit the proof.
\end{proof}

\paragraph{Riemann problem solutions structures: case of zero adsorption. }

\mbox{Although} it is not the focus of this paper, let us first discuss the structure of solutions to the Riemann problem for $a(c)\equiv0$ (zero adsorption). It will ease us into the more complicated case and allow us to compare the layouts qualitatively. For the monotone case with zero adsorption, the solutions of the Riemann problem originally were constructed in~\cite{Eli1981} using the generalized Lax entropy condition (see also~\cite{KK} and later works \cite{Eli-Temple},~\cite{Souza-Marchesin}). In~\cite{PePlMa2024} the vanishing adsorption limit of the Riemann problem solution was analyzed. The selection principle, introduced in~\cite{PePlMa2024}, comes from physical considerations,  justifies the admissibility criteria adopted previously for the monotone case, and selects the undercompressive contact discontinuities required to solve the general Riemann problem with non-monotone dependence (see~\cite[formula~(4.9)]{PePlMa2024}, which is an analogue of formula~\eqref{eq:solution-RP-1} for the case of zero adsorption).

Note that for the case of zero adsorption, Proposition~\ref{prop:crit} is not valid ($g\equiv0$ due to formula~\eqref{eq:g2}). Thus, formula~\eqref{eq:g1} implies that if the critical curve $\Gamma_{\mathcal{K}}$ intersects some $c$-rarefaction curve $\widetilde{\Gamma}$, then they coincide. In fact, these $c$-rarefaction curves are contact discontinuities. Also, 
\begin{align*}
    s_{0K}=s_{0L};\quad s_{1L}=s_{2K};\quad
    s_{1K}=s_{2L};\quad s_{3R}=s_{4K};\quad s_{4R}=s_{3K}.\quad
\end{align*}
This simplifies the description of all possible structures of the solution to the Riemann problem as a sequence of $s$-wave groups and $c$-waves. Depending on  $(s_L,s_R)\in[0,1]^2$, we obtain a subdivision into three regions, see Fig.~\ref{fig:RP-layouts-rare}a:
\begin{enumerate}
    \item If $(s_L,s_R)\in\mathbf{U}_{cs}$, then 
    \begin{align}
\label{eq:solution-RP-cs-a=0}
    u_L=(s_L,c_L)\xrightarrow{c\text{-rare}} u_{M}=(s_M,c_R) \xrightarrow{s} u_R=(s_R,c_R).
\end{align}
Here $s_L\in[0,s_{1L}]$ and $s_{R}\in[0,\min(1, s_{\mathcal{K}}(u_M))]$.
\item If $(s_L,s_R)\in\mathbf{U}_{sc}$, then 
\begin{align}
\label{eq:solution-RP-sc-a=0}
    u_L=(s_L,c_L)\xrightarrow{s}
    u_{N}=(s_N,c_L) \xrightarrow{c\text{-rare}}  
    u_R=(s_R,c_R).
\end{align}
Here $s_R\in[s_{4R},1]$ and $s_{L}\in[s_{\mathcal{K}}(u_N),1]$.
\item If $(s_L,s_R)\in\mathbf{U}_{scs}$, then 
    \begin{align}
\label{eq:solution-RP-scs-a=0}
    u_L=(s_L,c_L)\xrightarrow{s} u_{-}=(s_{2L},c_L)\xrightarrow{c\text{-rare}} u_{+}=(s_{3R},c_R) \xrightarrow{s} u_R=(s_R,c_R).
    \end{align}
    Here $s_R\in[0,s_{4R}]$ and $s_{L}\in[s_{1L},1]$. 
\end{enumerate}
Notice that the combinations of waves~\eqref{eq:solution-RP-cs-a=0}, \eqref{eq:solution-RP-sc-a=0}, \eqref{eq:solution-RP-scs-a=0} are compatible due to Proposition~\ref{prop:critical-value-properties}, which is also valid for the case of zero adsorption. 
\begin{remark}
Notice that on the boundaries of the regions~$\mathbf{U}_{cs}$, $\mathbf{U}_{sc}$, and $\mathbf{U}_{scs}$, each of the sequences~\eqref{eq:solution-RP-cs-a=0}, \eqref{eq:solution-RP-sc-a=0}, and \eqref{eq:solution-RP-scs-a=0} may become degenerate.
Although these sequences differ in terms of the order of $s$- and $c$-waves, the resulting solution to the Riemann problem $(s(x,t),c(x,t))$ is the same.
    
\end{remark}

\begin{figure}[H]
    \centering
(a)\includegraphics[width=0.37\linewidth]{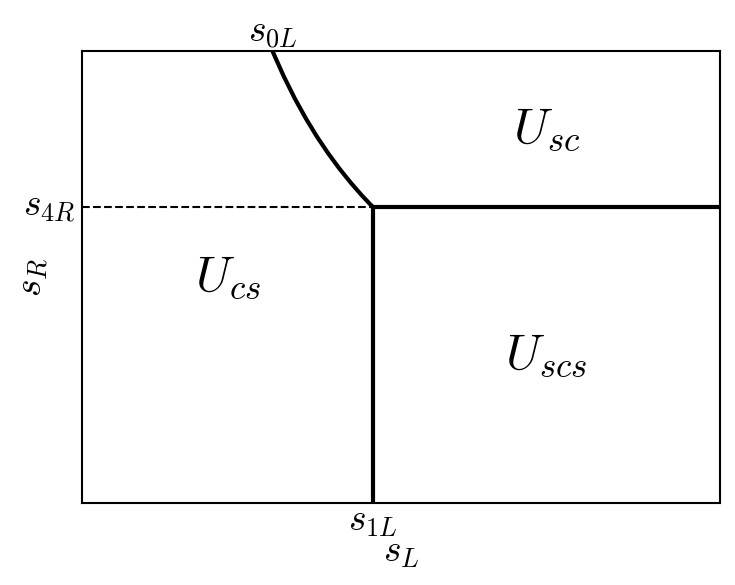}
(b)\includegraphics[width=0.45\linewidth]{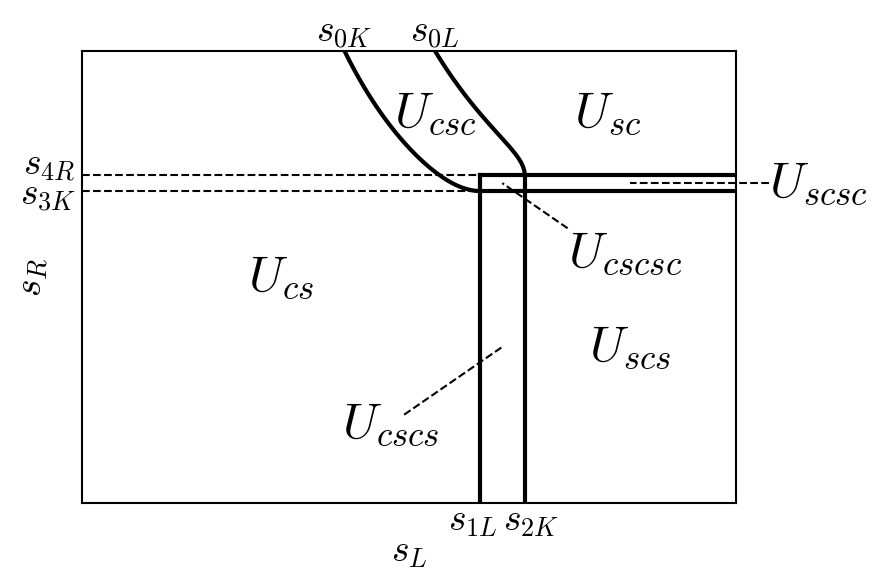}
    \caption{
    Subdivision into regions in the $(s_L,s_R)$-plane with different structure of solutions in terms of the sequence of $s$-wave groups and $c$-waves:\\
    (a) Case with zero adsorption --- three regions $\mathbf{U}_{cs}, \mathbf{U}_{sc}$ and $\mathbf{U}_{scs}$.
    \\
    (b) Case with non-zero adsorption --- 
     seven regions $\mathbf{U}_{cs}, \mathbf{U}_{sc}$, $\mathbf{U}_{scs}$, $\mathbf{U}_{csc}$, $\mathbf{U}_{scsc}$, $\mathbf{U}_{cscs}$ and $\mathbf{U}_{cscsc}$. In the limit of vanishing adsorption, $a(c)\to0$, the regions $\mathbf{U}_{csc}$, $\mathbf{U}_{scsc}$, $\mathbf{U}_{cscs}$ and $\mathbf{U}_{cscsc}$ dissapear and the diagram (b) tends to diagram (a).}
    \label{fig:RP-layouts-rare}
\end{figure}

\begin{figure}[H]
    \centering
\includegraphics[width=0.325\linewidth]{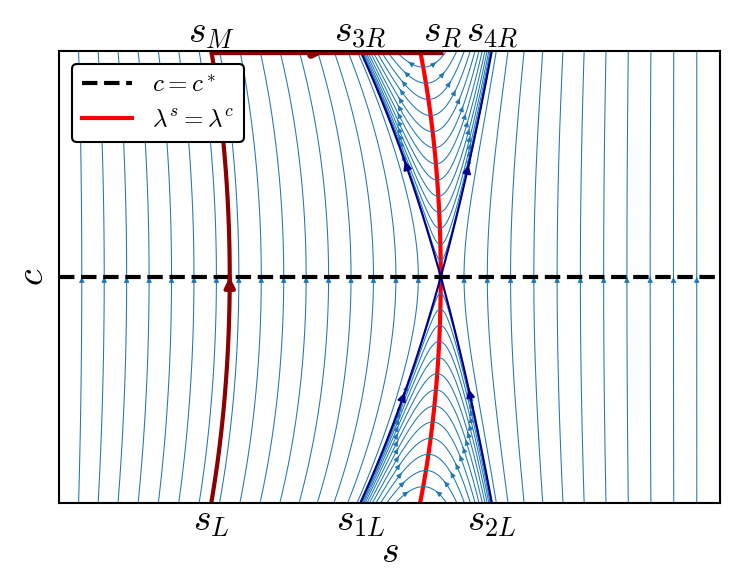}
    \includegraphics[width=0.325\linewidth]{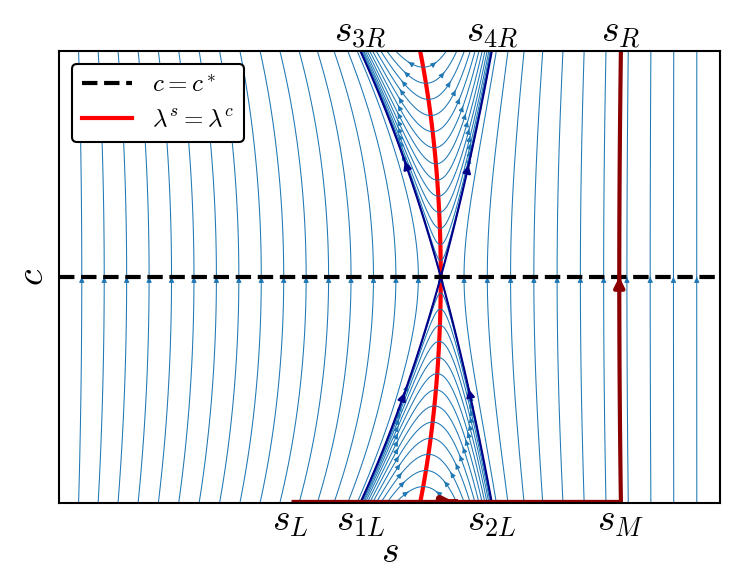}
    \includegraphics[width=0.325\linewidth]{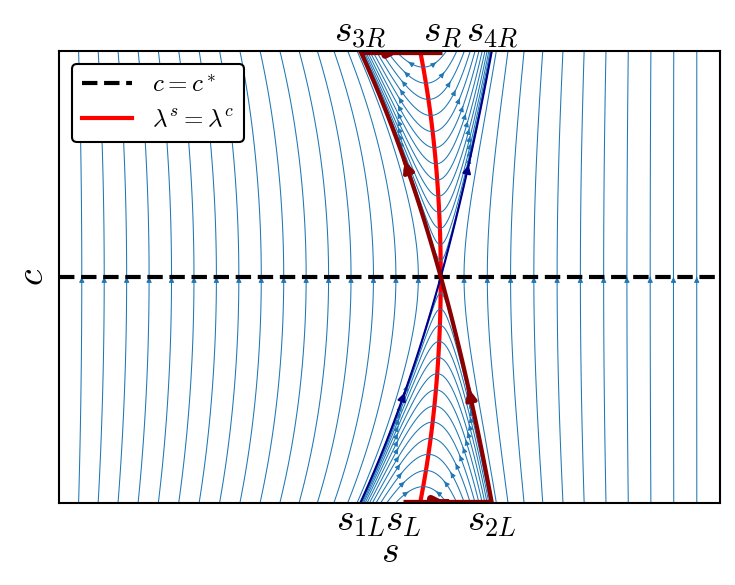}
    \\  \qquad(a)\qquad\qquad\qquad\qquad\qquad\qquad (b)\qquad\qquad\qquad\qquad\qquad\qquad (c)
    \caption{
    (a) Representation of the sequence of waves~\eqref{eq:solution-RP-cs-a=0} in $(s,c)$-plane for $(s_L,s_R)\in\mathbf{U}_{cs}$.
    \\
    (b) \,Representation of the sequence of waves~\eqref{eq:solution-RP-sc-a=0} in $(s,c)$-plane for $(s_L,s_R)\in\mathbf{U}_{sc}$.
 \\
    (c) Representation of the sequence of waves~\eqref{eq:solution-RP-scs-a=0} in $(s,c)$-plane for $(s_L,s_R)\in\mathbf{U}_{scs}$.}
    \label{fig:RP-regions-cs-sc-scs}
\end{figure}

\paragraph{Riemann problem solutions structures: non-zero adsorption case. }

In this Section we list all possible structures of the solution to the Riemann problem for $a(c)$ satisfying (A1)--(A3) as a sequence of $s$-wave groups and $c$-waves depending on  $(s_L,s_R)\in[0,1]^2$. We obtain the subdivision into 7 regions (see Fig.~\ref{fig:RP-layouts-rare}b):

\begin{enumerate}
    \item Region $\mathbf{U}_{cs}$:
    \begin{align*}
        \mathbf{U}_{cs}=\{(s_L,s_R)\in[0,1]^2:
        \,&s_L\in[0,s_{1L}]\text{ and }s_R\in[0,s_{\mathcal{K}}(u_M)] \text{ when } s_{\mathcal{K}}(u_M) \text{ exists},
        \\
        &\text{or }s_L\in[0,s_{1L}] \text{ and }s_R\in[0,1] \text{ when it does not exist;}
        \\
        &\text{here } u_M \text{ is such that } u_L=(s_L,c_L)\xrightarrow{c\text{-rare}}u_M=(s_M,c_R)\}.
    \end{align*}
    Notice that by Remarks~\ref{rmk:rare-s=0-s=1},~\ref{rm:rare} for $s_L\in[0,s_{1L})$ the rarefaction curve that passes through the state $(s_L,c_L)$ always reaches some state on the line $\{c=c_R\}$, which we call $u_M=(s_M,c_R)$. For $s_L=s_{1L}$, we take $s_M=s_{3R}$ and the combination of two $c$-rarefaction curves $\Gamma_1$ and $\Gamma_3$ connects $u_L$ and $u_M$. Due to Proposition~\ref{prop:critical-value-properties}, for any $(s_L,s_R)\in\mathbf{U}_{cs}$ the following sequence of waves $(cs)$ provides a solution to the Riemann problem:
    \begin{align}
\label{eq:solution-RP-cs}
    u_L=(s_L,c_L)\xrightarrow{c\text{-rare}} u_{M}=(s_M,c_R) \xrightarrow{s} u_R=(s_R,c_R).
\end{align}
Geometrically, on the plane $(s_L,s_R)$  the region $\mathbf{U}_{cs}$ is a union of rectangle $[0,s_{0K}]\times[0,1]$ and a curvilinear trapezoid with three straight boundaries: 
\begin{align*}
    \{s_{0K}\}\times[0,1];\qquad  [s_{0K},s_{1L}]\times\{0\};\qquad \{s_{1L}\}\times [0,s_{3K}];
\end{align*}
and the forth boundary being a continuous (strictly decreasing) curve
\begin{align*}
 b_{cs}:s_L\in[s_{0K},s_{1L}]\mapsto s_R\in [s_{3K},1], &\text{ where } s_R=s_{\mathcal{K}}(u_M),
\end{align*}
for $u_M$ defined in~\eqref{eq:solution-RP-cs}. In particular, the inverse $b_{cs}^{-1}: s_R\mapsto s_L$ is well-defined.

The sequence of waves~\eqref{eq:solution-RP-cs} degenerates into a single $c$-wave for $s_R=s_M$, specifically for $(s_L,s_R)$ equal to $(0,0)$ and $(s_{1L},s_{3R})$ and a certain continuous monotone curve connecting them.

\item Region $\mathbf{U}_{sc}$:
  \begin{align*}
        \mathbf{U}_{sc}=\{(s_L,s_R)\in[0,1]^2:
        \,&s_R\in[s_{4R},1]\text{ and } s_L\in[s_{\mathcal{K}}(u_N),1];
        \\
        &\text{here } u_N=(s_N,c_L)\xrightarrow{c\text{-rare}}u_R=(s_R,c_R)\}.
    \end{align*}
     Notice that by Remark~\ref{rm:rare} for $s_R\in(s_{4R},1]$ the $c$-rarefaction curve that passes through the state $(s_R,c_R)$ always begins at some state on the line $\{c=c_L\}$, which we call $u_N=(s_N,c_L)$. For $s_R=s_{4R}$, we take $s_N=s_{2L}$ and the combination of two $c$-rarefaction curves $\Gamma_2$ and $\Gamma_4$ connects $u_N$ and $u_R$. Due to Proposition~\ref{prop:critical-value-properties}, for any $(s_L,s_R)\in\mathbf{U}_{sc}$ the following sequence of waves $(sc)$ provides a solution to the Riemann problem:
    \begin{align}
\label{eq:solution-RP-sc}
    u_L=(s_L,c_L)\xrightarrow{s}
    u_{N}=(s_N,c_L) \xrightarrow{c\text{-rare}}  
    u_R=(s_R,c_R).
\end{align}
Geometrically, on the plane $(s_L,s_R)$  the region $\mathbf{U}_{sc}$ is a curvilinear trapezoid with three straight boundaries: 
\begin{align*}
      [s_{2K},1]\times\{s_{4R}\};
      \qquad \{1\}\times[s_{4R},1];
      \qquad[s_{0L},1]\times\{1\};
\end{align*}
and the forth boundary being a continuous (strictly decreasing) curve
\begin{align*}
    b_{sc}:s_R\in[s_{4R},1]\mapsto s_L\in [s_{0L},s_{2K}], &\text{ where } s_L=s_{\mathcal{K}}(u_N),
\end{align*}
for $u_N$ defined in~\eqref{eq:solution-RP-sc}. In particular, the inverse $b_{sc}^{-1}: s_L\mapsto s_R$ is well-defined.

The sequence of waves~\eqref{eq:solution-RP-sc} degenerates into a single $c$-wave for $s_L=s_N$, specifically for $(s_L,s_R)$ equal to $(1,1)$ and $(s_{2L},s_{4R})$ and a certain continuous monotone curve between them.

    \item Region $\mathbf{U}_{csc}$:
    \begin{align*}
        \mathbf{U}_{csc}=\{(s_L,s_R)\in[0,1]^2:
        &\text{ either } s_R\in[s_{4R},1] \text{ and }s_L\in[b_{cs}^{-1}(s_R), b_{sc}(s_R)];
        \\
        &\text{ or }
    s_R\in(s_{3K},s_{4R}] \text{ and }s_L\in[b_{cs}^{-1}(s_R), s_{1L}]\}.
    \end{align*}

     We claim that there exist states $u_-,u_+\in[0,1]^2$ such that the following sequence of waves $(csc)$ provides a solution to the Riemann problem:
    \begin{align}
\label{eq:solution-RP-csc}
    u_L=(s_L,c_L)\xrightarrow{c\text{-rare}} u_{-} \xrightarrow{s} u_{+} \xrightarrow{c\text{-rare}} u_R=(s_R,c_R).
    \end{align}
    \begin{remark}
        
    Notice that when $(s_L,s_R)\in\mathbf{U}_{csc}\cap\mathbf{U}_{sc}$ the sequence~\eqref{eq:solution-RP-csc} degenerates into a sequence $(sc)$ of two waves as in~\eqref{eq:solution-RP-sc}. Similarly, when $(s_L,s_R)\in\mathbf{U}_{csc}\cap\mathbf{U}_{cs}$ the sequence~\eqref{eq:solution-RP-csc} degenerates into a sequence $(cs)$ of two waves as in~\eqref{eq:solution-RP-cs}. Also, for $(s_L,s_R)=(s_{1L},s_{4R})$ the solution to a Riemann problem consists of a single $c$-rarefaction wave that corresponds to two $c$-rarefaction curves $\Gamma_1\cup\Gamma_4$. In what follows, we exclude these cases from consideration.
\end{remark}

    The general idea to find the states  $u_-$ and $u_+$ is as follows. Consider $c$-rarefaction curve  $\Gamma_{\mathcal{L}}$ that passes through the state $(s_L,c_L)$, its critical curve $\Gamma_{\mathcal{K}}$ and $c$-rarefaction curve $\Gamma_{\mathcal{R}}$ that passes through the state $(s_R,c_R)$. It is sufficient to prove that there exists an intersection point $u_+=(s_+,c_+)$ of $\Gamma_{\mathcal{R}}$ and $\Gamma_{\mathcal{K}}$. Then taking $u_-=(s_{\mathcal{K}}(u_+),c_+)$, the sequence of waves~\eqref{eq:solution-RP-csc} is admissible due to Proposition~\ref{prop:critical-value-properties}.  Let us prove that $\Gamma_{\mathcal{K}}$ and $\Gamma_{\mathcal{R}}$ intersect for the following cases  separately: 
    \begin{enumerate}
    \item $s_L\in(s_{0K},s_{0L})$ and $s_R\in(b_{cs}(s_L), 1]$;
    \item  $s_L\in[s_{0L},s_{1L}]$ $\,$ and $s_R\in[s_{4R}, b_{sc}^{-1}(s_L))$, $(s_L,s_R)\not=(s_{1L},s_{4R})$;
    \item  $s_L\in[s_{0L},s_{1L}]$ $\,$ and $s_R\in(b_{cs}(s_L),s_{4R})$;
    \item $s_L\in(s_{1L},s_{2K})$ and $s_R\in[s_{4R},b_{sc}^{-1}(s_L))$.
\end{enumerate}
    
    \vspace{5pt}
    
    Case (b) is the simplest to consider (see Fig.~\ref{fig:RP-layouts-rare-1}b). There exist two $c$-rarefaction curves  $\Gamma_{\mathcal{L}}$ and $\Gamma_{\mathcal{R}}$:
    \begin{align*}
        u_L=(s_L,c_L)\xrightarrow{\Gamma_{\mathcal{L}}} u_M=(s_M,c_R);\qquad u_N=(s_N,c_L)\xrightarrow{\Gamma_{\mathcal{R}}} u_R=(s_R,c_R).
    \end{align*}
    If $s_L=s_{1L}$, we take $\Gamma_{\mathcal{L}}=\Gamma_1\cup\Gamma_3$; if $s_R=s_{4R}$, we take $\Gamma_{\mathcal{R}}=\Gamma_2\cup\Gamma_4$.
    For $(s_L,s_R)\in\mathrm{int} (\mathbf{U}_{csc})$, we obtain $s_R>s_{\mathcal{K}}(u_M)$ and $s_L<s_{\mathcal{K}}(u_N)$. Due to Proposition~\ref{prop:sK-basic-properties}, the last inequality implies $s_{\mathcal{K}}(u_L)>s_N$ as long as $s_{\mathcal{K}}(u_L)$ is well-defined (that is for $s_L\geq s_{0L}$). Note that  $\Gamma_{\mathcal{K}}$ is a continuous curve that connects the states
    \begin{align}
    \label{eq:gamma-k}
        (s_{\mathcal{K}}(u_L),c_L)\xrightarrow{\Gamma_{\mathcal{K}}}(s_{\mathcal{K}}(u_M),c_R).
    \end{align}
    Thus, the inequalities $s_{\mathcal{K}}(u_L)>s_N$ and $s_{\mathcal{K}}(u_M)<s_R$ imply that the curves $\Gamma_{\mathcal{R}}$ and $\Gamma_{\mathcal{K}}$ have at least one intersection point (in fact, exactly one due to Proposition~\ref{prop:crit}). 
    
    \medskip
Case (a), see Fig.~\ref{fig:RP-layouts-rare-1}a. The situation here is more involved. By construction, the $c$-rarefaction curve $\Gamma_{\mathcal{L}}$ has exactly one intersection point with~$\Gamma_{0K}$ (critical curve to $c$-rarefaction curve $s\equiv1$). Denote this point by  $(\tilde{s},\tilde{c})$. Thus, the critical curve $\Gamma_{\mathcal{K}}$  connects the states $(1,\tilde{c})$ and $(s_{\mathcal{K}}(u_M),c_R)$. It is clear that $c$-rarefaction curve $\Gamma_{\mathcal{R}}$ intersects the line $\{c=\tilde{c}\}$ at some state where $s<1$, hence, by continuity the curves $\Gamma_{\mathcal{R}}$ and $\Gamma_{\mathcal{K}}$ intersect at some point.

   \vspace{5pt}
   Case (c), see Fig.~\ref{fig:RP-layouts-rare-1}c. 
   The main difference with the case (b) is that $\Gamma_{\mathcal{R}}$ crosses the coincidence locus $\mathcal{C}$ at some point $(\tilde{s},\tilde{c})$, does not cross the line $\{c=c^*\}$. Notice that the critical curve  $\Gamma_{\mathcal{K}}\subset\Omega_R$, thus due to~\eqref{eq:gamma-k} it intersects the line $\{c=\tilde{c}\}$ at some point in $\Omega_R$, where $s>\tilde{s}$. The same argument as before implies that the curves $\Gamma_{\mathcal{K}}$ and $\Gamma_{\mathcal{R}}$ intersect.

\vspace{5pt}
    Case (d), see Fig.~\ref{fig:RP-layouts-rare-1}d.  
    The main difference with the case (b) is that $\Gamma_{\mathcal{L}}$ crosses the coincidence locus $\mathcal{C}$ at some point $(\tilde{s},\tilde{c})$ and does not cross the line $\{c=c^*\}$.     For $(s_L,s_R)\in\mathrm{int}(\mathbf{U}_{csc})$, we  obtain $s_L<s_{\mathcal{K}}(u_N)$ or equivalently $s_{\mathcal{K}}(u_L)>s_N$. Also $\Gamma_{\mathcal{K}}$ intersects the coincidence locus at the same point $(\tilde{s},\tilde{c})$ as the curve $\Gamma_{\mathcal{L}}$. The curve $\Gamma_{\mathcal{R}}$ intersects the segment $\{c=\tilde{c}\}$ at some point in $\Omega_R$, where $s>\tilde{s}$. The same argument as before implies that the curves $\Gamma_{\mathcal{K}}$ and $\Gamma_{\mathcal{R}}$ intersect.

    \begin{figure}[h]
    \centering
    (a)
    \includegraphics[width=0.38\linewidth]{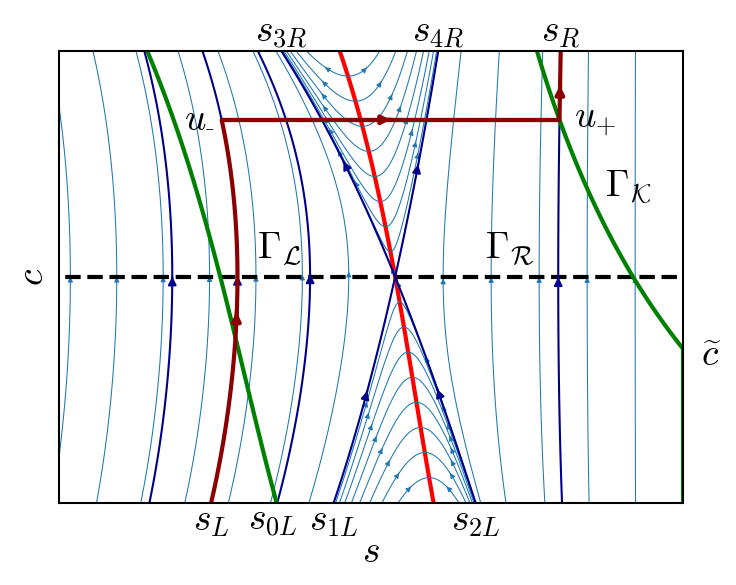}
    (b)\includegraphics[width=0.382\linewidth]{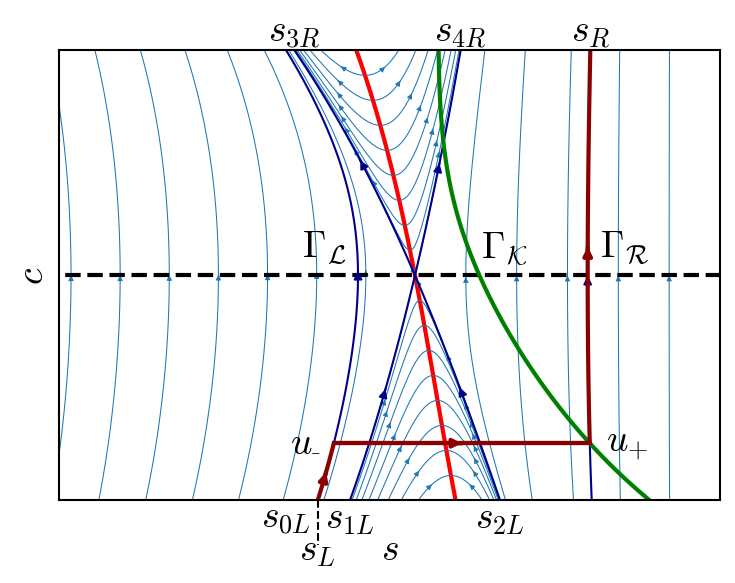}
    \\
    (c)
    \includegraphics[width=0.38\linewidth]{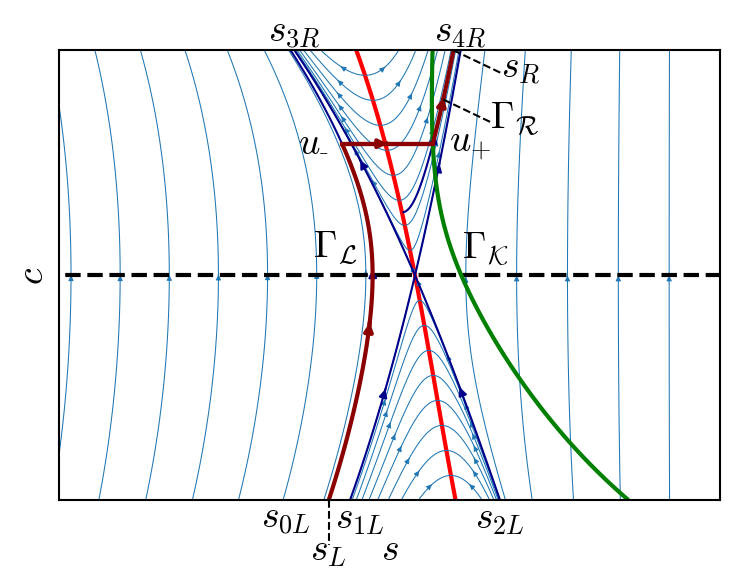}
    (d)\includegraphics[width=0.383\linewidth]{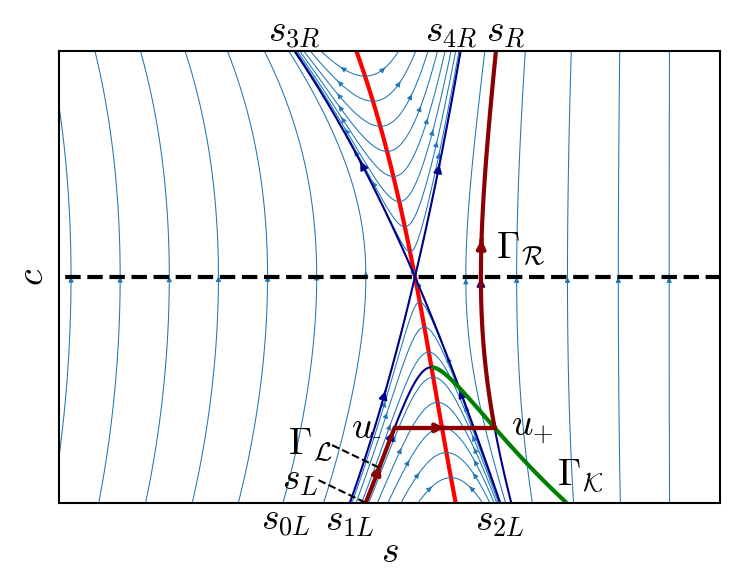}
    \caption{Schematic representation of the sequence of $(csc)$-waves~\eqref{eq:solution-RP-csc} for $(s_L,s_R)\in\mathbf{U}_{csc}$, cases (a), (b), (c) and (d).
    }
    \label{fig:RP-layouts-rare-1}
\end{figure}
  
   \item Region $\mathbf{U}_{scs}$:
    \begin{align*}
        \mathbf{U}_{scs}=\{(s_L,s_R)\in[s_{2K},1]\times[0,s_{3K}]\}.
    \end{align*}
  Due to Proposition~\ref{prop:critical-value-properties}, the following sequence of waves $(scs)$ provides a solution to the Riemann problem:
    \begin{align}
\label{eq:solution-RP-scs}
    u_L=(s_L,c_L)\xrightarrow{s} u_{2L}=(s_{2L},c_L)\xrightarrow{c\text{-rare}} u_{3R}=(s_{3R},c_R) \xrightarrow{s} u_R=(s_R,c_R).
    \end{align}
    Here the $c$-rarefaction curve between $u_{2L}$ and $u_{3R}$ can be seen as a combination of two $c$-rarefaction curves $\Gamma_2$ and $\Gamma_3$:
    \begin{align}
    \label{eq:rare-s2L-s3R}
        u_{2L}=(s_{2L},c_L)\xrightarrow{c\text{-rare}} (s^*,c^*)\xrightarrow{c\text{-rare}}u_{3R}=(s_{3R},c_R).
    \end{align}
    The corresponding  rarefaction waves are given by formulas~\eqref{eq:rare}. Their concatenation gives a unique rarefaction wave that corresponds to the $c$-rarefaction curve between $u_{2L}$ and $u_{3R}$:
    \begin{align*}
        u(x,t) =
\begin{cases}
u_{2L}, & \text{if } x/t < \lambda(u_-), \\[0.3em]
v, & \text{if } x/t = \lambda(v), \\[0.3em]
u_{3R}, & \text{if } x/t > \lambda(u_+).
\end{cases}
    \end{align*}

\begin{remark}
     For $(s_L,s_R)=(s_{2L},s_{3R})\in\mathbf{U}_{scs}$, the sequence~\eqref{eq:solution-RP-scs} degenerates into a single $c$-curve~\eqref{eq:rare-s2L-s3R}. Also, when $s_L=s_{2L}$ and $s_R\in[0,s_{3K}]\setminus\{s_{3R}\}$, the sequence~\eqref{eq:solution-RP-scs} degenerates into a $(cs)$-sequence; when $s_R=s_{3R}$ and $s_L\in[s_{2K},1]\setminus\{s_{2L}\}$, the sequence~\eqref{eq:solution-RP-scs} degenerates into a $(sc)$-sequence.
    \end{remark}
    \item Region $\mathbf{U}_{cscs}$:
     \begin{align*}
        \mathbf{U}_{cscs}=\{(s_L,s_R)\in[s_{1L},s_{2K}]\times[0,s_{3K}]\}.
    \end{align*}
    There exist two states $u_-$ and $u_+$ such that the following sequence of waves $(cscs)$ provides a solution to the Riemann problem (see Fig.~\ref{fig:RP-layouts-rare-5}a):
    \begin{align}
\label{eq:solution-RP-cscs}
    u_L=(s_L,c_L)\xrightarrow{c\text{-rare}} u_{-}\xrightarrow{s}u_{+} \xrightarrow{c\text{-rare}} (s_{3R},c_R)\xrightarrow{s} u_R=(s_R,c_R).
    \end{align}
\begin{remark}
    For some pairs $(s_L,s_R)\in\mathbf{U}_{cscs}$ the sequence~\eqref{eq:solution-RP-cscs} degenerates. Specifically, when $s_L = s_{1L}$, the first $(csc)$ sequence in the structure degenerates into a single $c$-wave; when $s_L = s_{2K}$, the first $c$-wave degenerates and disappears; finally, when $s_R = s_{3R}$, the last $s$-wave degenerates and disappears.
    
\end{remark}

The states  $u_-$ and $u_+$ can be constructed as follows.
Consider $c$-rarefaction curve  $\Gamma_{\mathcal{L}}$ that passes through the state $(s_L,c_L)$ and its critical curve $\Gamma_{\mathcal{K}}$. As $s_L\in[s_{1L},s_{2K}]$, $\Gamma_{\mathcal{L}}$ crosses the coincidence locus $\mathcal{C}$ at some point $(\tilde{s},\tilde{c})$, $\tilde{c}\leq c^*$, and so does $\Gamma_{\mathcal{K}}$. Note that $s_{\mathcal{K}}(u_L)\in[s_{2L},s_{1K}]$. This implies that the curves $\Gamma_{\mathcal{K}}$ and $\Gamma_2$ intersect. We denote this point of intersection by $u_+=(s_+,c_+)$ and take $u_-=(s_{\mathcal{K}}(u_+),c_+)$. By Proposition~\ref{prop:critical-value-properties}, all combinations of $s$ and $c$ waves are compatible by construction.

    \begin{figure}[h]
    \centering
    \includegraphics[width=0.32\linewidth]{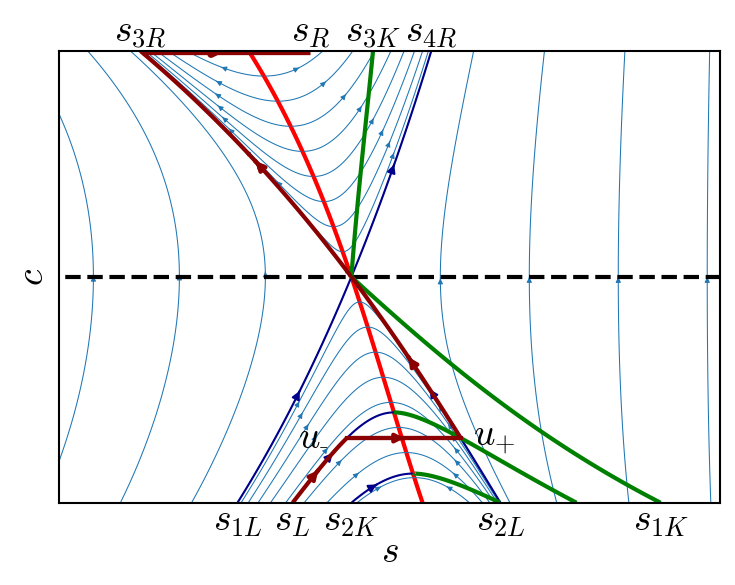}
    \includegraphics[width=0.32\linewidth]{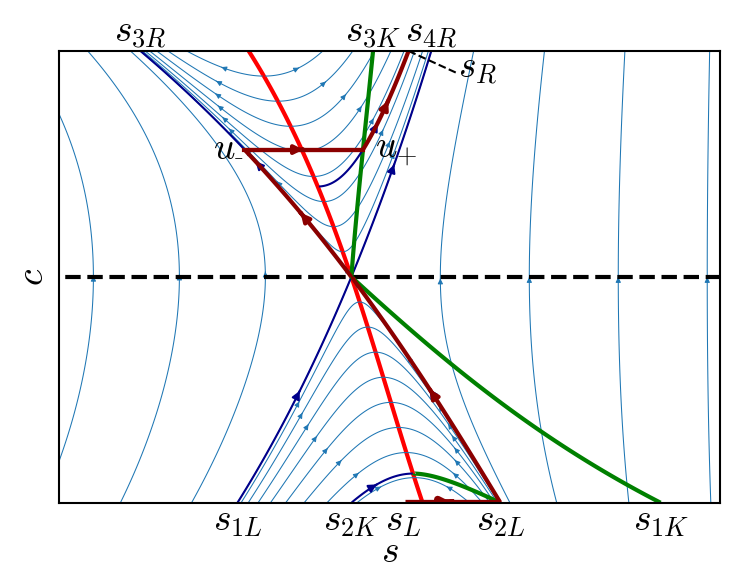}
    \includegraphics[width=0.32\linewidth]{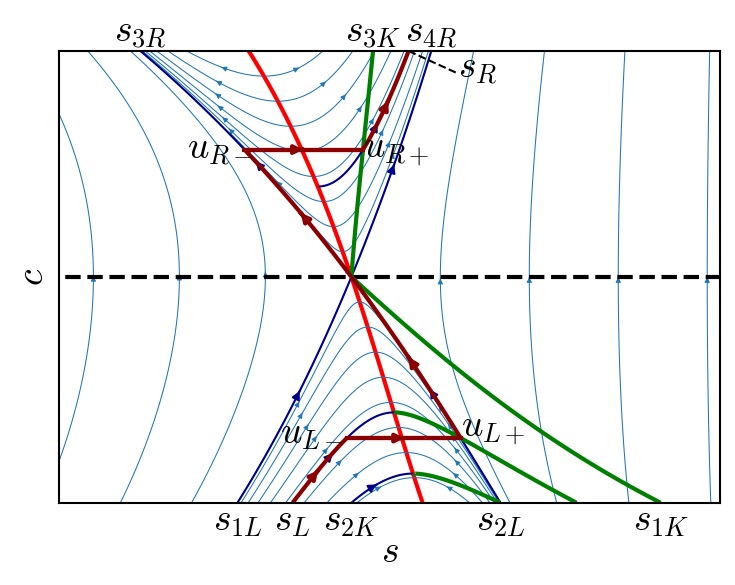}
    \\
    \qquad\qquad\qquad\qquad(a)\hfill (b)\hfill (c)\qquad\qquad\qquad\quad\quad
    \caption{(a) Case $(s_L,s_R)\in \mathbf{U}_{cscs}$; (b) Case $(s_L,s_R)\in \mathbf{U}_{scsc}$; (c) Case $(s_L,s_R)\in \mathbf{U}_{cscsc}$.}
    \label{fig:RP-layouts-rare-5}
\end{figure}

    \item Region $\mathbf{U}_{scsc}$:
    \begin{align*}
        \mathbf{U}_{scsc}=\{(s_L,s_R)\in[s_{2K},1]\times[s_{3K},s_{4R}]\}.
    \end{align*}
There exist two states $u_-$ and $u_+$ such that the following sequence of waves $(scsc)$ provides a solution to the Riemann problem (see Fig.~\ref{fig:RP-layouts-rare-5}b):
    \begin{align}
\label{eq:solution-RP-scsc}
    u_L=(s_L,c_L)\xrightarrow{s}(s_{2L},c_L)\xrightarrow{c\text{-rare}} u_{-}\xrightarrow{s}u_{+} \xrightarrow{c\text{-rare}}  u_R=(s_R,c_R).
    \end{align}
The states  $u_-$ and $u_+$ can be constructed as follows.
Consider $c$-rarefaction curve  $\Gamma_{\mathcal{R}}$ that passes through the state $(s_R,c_R)$ and the critical curve to $c$-rarefaction curve $\Gamma_3$ (we denote it by  $\Gamma_{3\mathcal{K}}$). As $s_R\in[s_{3K},s_{4R}]$, $\Gamma_{\mathcal{R}}$ crosses the coincidence locus $\mathcal{C}$ at some point $(\tilde{s},\tilde{c})$, $\tilde{c}\geq c^*$. This implies that the curves $\Gamma_{\mathcal{R}}$ and $\Gamma_{3\mathcal{K}}$ intersect. We denote this point of intersection by $u_+=(s_+,c_+)$ and take $u_-=(s_{\mathcal{K}}(u_+),c_+)$. By Proposition~\ref{prop:critical-value-properties}, all combinations of $s$ and $c$ waves are compatible by construction. Similar to the previous case, some waves degenerate when $s_L = s_{2L}$, or $s_R=s_{4R}$, or $s_R=s_{3K}$.

    \item Region $\mathbf{U}_{cscsc}$:
    \begin{align*}
        \mathbf{U}_{cscsc}=\{(s_L,s_R)\in[s_{1L},s_{2K}]\times[s_{3K},s_{4R}]\}.
    \end{align*}
There exist four states $u_{L-}$, $u_{L+}$, $u_{R-}$ and $u_{R+}$ such that the following sequence of waves $(cscsc)$ provides a solution to the Riemann problem:
    \begin{align}
\label{eq:solution-RP-cscsc}
    u_L\xrightarrow{c\text{-rare}} u_{L-}\xrightarrow{s}u_{L+} \xrightarrow{c\text{-rare}} u_{R-}\xrightarrow{s} u_{R+} \xrightarrow{c\text{-rare}} u_R.
    \end{align}
The states  $u_{L-}$ and $u_{L+}$ can be constructed exactly in the same way as the states $u_-$ and $u_+$ for the case $\mathbf{U}_{cscs}$. The states  $u_{R-}$ and $u_{R+}$ can be constructed exactly in the same way as the states $u_-$ and $u_+$ for the case $\mathbf{U}_{scsc}$. See Fig.~\ref{fig:RP-layouts-rare-5}c. 
Similar to previous cases, some waves degenerate at $s_L = s_{1L}$ or $s_L = s_{2K}$ and $s_R = s_{4R}$ or $s_R = s_{3K}$.
Note that~\eqref{eq:solution-RP-cscsc} is the exact sequence of waves that contains three different $c$-rarefaction curves and was not described in previous works on this problem.
\end{enumerate}

\section*{Acknowledgements}

The authors thank Pavel Bedrikovetsky for lectures on systems of hyperbolic conservation laws. The work of Y. Petrova was supported by Programa de Apoio a Novos Docentes USP, CNPq grant 406460/2023-0. The work of N. Rastegaev was supported by the Ministry of Science and Higher Education of the Russian Federation (agreement 075-15-2025-344 dated 29/04/2025 for Saint Petersburg Leonhard Euler International Mathematical Institute at PDMI RAS).
\bigskip
\bigskip

\begin{enbibliography}{99}
\addcontentsline{toc}{section}{References}

\bibitem{Pires2021}
Apolin\'{a}rio, F.~O. and Pires, A.~P., 2021. Oil displacement by multicomponent slug injection: An analytical solution for Langmuir adsorption isotherm. Journal of Petroleum Science and Engineering, 197, p.~107939.

\bibitem{Bahetal}
Bakharev, F., Enin, A., Petrova, Y. and Rastegaev, N., 2023. Impact of dissipation ratio on vanishing viscosity solutions of the Riemann problem for chemical flooding model. Journal of Hyperbolic Differential Equations, 20(02), pp.~407-432.

\bibitem{Tapering}
Bakharev, F., Enin, A., Kalinin, K., Petrova, Y., Rastegaev, N. and Tikhomirov, S., 2023. Optimal polymer slugs injection profiles. Journal of Computational and Applied Mathematics, 425, p.~115042.

\bibitem{BL} 
Buckley, S.~E. and Leverett, M., 1942. Mechanism of fluid displacement in sands. Transactions of the AIME, 146(01), pp.~107-116.

\bibitem{Castaneda}
Casta\~{n}eda, P., 2016. Dogma: S-shaped. The Mathematical Intelligencer, 38, pp.~10-13.

\bibitem{Courant}
Courant, R., 1944. Supersonic Flow and Shock Waves: A Manual on the Mathematical Theory of Non-linear Wave Motion (No. 62). Courant Institute of Mathematical Sciences, New York University.

\bibitem{Dafermos}
Dafermos, C.~M., 2016. Hyperbolic Conservation Laws in Continuum Physics. Springer.

\bibitem{Tveito}
Dahl, O., Johansen, T., Tveito, A. and Winther, R., 1992. Multicomponent chromatography in a two phase environment. SIAM Journal on Applied Mathematics, 52(1), pp.~65-104.

\bibitem{Dan-topological-tool}
Eschenazi, C.~S., Lambert, W.~J., Lopez-Flores, M.~M., Marchesin, D., Palmeira, C.~F. and Plohr, B.~J., 2025. Solving Riemann problems with a topological tool. Journal of Differential Equations, 416, pp.~2134-2174.

\bibitem{Gelfand}
Gelfand, I.~M., 1959. Some problems in the theory of quasilinear equations. Uspekhi Matematicheskikh Nauk, 14(2), pp.~87-158 (in Russian). English translation in Transactions of the American Mathematical Society, 29(2), 1963, pp.~295-381.

\bibitem{Eli1981}
Isaacson, E.~L., 1981. Global solution of a Riemann problem for a non-strictly hyperbolic system of conservation laws arising in enhanced oil recovery, Rockefeller University, New York, NY, preprint.

\bibitem{IsaMarPlohr}
Isaacson, E.~L., Marchesin, D. and Plohr, B.~J., 1990. Transitional waves for conservation laws. SIAM Journal on Mathematical Analysis, 21(4), pp.~837-866.

\bibitem{Eli-Temple}
Isaacson, E.~L., and Temple, J.~B., 1986. Analysis of a singular hyperbolic system of conservation laws, Journal of Differential Equations, 65(2), pp.~250-268.

\bibitem{JnW}
Johansen, T. and Winther, R., 1988. The solution of the Riemann problem for a hyperbolic system of conservation laws modeling polymer flooding. SIAM Journal on Mathematical Analysis, 19(3), pp.~541-566.

\bibitem{KK}
Keyfitz, B.~L., and Kranzer, H.~C., 1980. A system of non-strictly hyperbolic conservation laws arising in elasticity theory. Archive for Rational Mechanics and Analysis, 72(3), pp.~219-241.

\bibitem{Kruzhkov}
Kru\v{z}kov, S.~N., 1970. First order quasilinear equations in several independent variables. Mathematics of the USSR-Sbornik, 10(2), pp.~217-243.

\bibitem{Lax57}
Lax, P.~D., 1957. Hyperbolic systems of conservation laws II. Communications on Pure and Applied Mathematics, 10(4), pp.~537-566.

\bibitem{MR2025}
Matveenko, S. and Rastegaev, N., 2025. Oil displacement by slug injection: a rigorous justification for the Jouguet principle heuristic. arXiv preprint arXiv:2511.08533.

\bibitem{Oleinik}
Oleinik, O.~A., 1957. Discontinuous solutions of non-linear differential equations. Uspekhi Matematicheskikh Nauk, 12(3)(75), pp.~3-73 (in Russian). English translation in American Mathematical Society Translations, 26(2), 1963, pp.~95-172.

\bibitem{PiBeSh06}
Pires, A.~P., Bedrikovetsky, P.~G. and Shapiro, A.~A., 2006. A splitting technique for analytical modelling of two-phase multicomponent flow in porous media. Journal of Petroleum Science and Engineering, 51(1-2), pp.~54-67.

\bibitem{PePlMa2024}
Petrova, Y., Plohr, B.~J. and Marchesin, D., 2024. Vanishing adsorption limit of Riemann problem solutions for the polymer model. Journal of Hyperbolic Differential Equations, 21(02), pp.~299-327.

\bibitem{RastS-Shaped}
Rastegaev, N.~V., 2024. On the sufficient conditions for the S-shaped Buckley--Leverett function. 
Zapiski Nauchnykh Seminarov POMI, 536, pp.247-260.

\bibitem{MR2024}
Rastegaev, N. and Matveenko, S., 2024. Kru\v{z}kov-type uniqueness theorem for the chemical flood conservation law system with local vanishing viscosity admissibility. Journal of Hyperbolic Differential Equations, 21(04), pp.~1003-1043.

\bibitem{Serre1}
Serre, D. Systems of Conservation Laws 1: Hyperbolicity, entropies, shock waves. Cambridge University Press, 1999.

\bibitem{Shen}
Shen, W., 2017. On the uniqueness of vanishing viscosity solutions for Riemann problems for polymer flooding. Nonlinear Differential Equations and Applications NoDEA, 24, pp.~1-25.

\bibitem{Souza-Marchesin}
de Souza, A.~J.,  and Marchesin, D., 1998. Conservation laws possessing contact characteristic fields with singularities. Acta Applicandae Mathematicae, 51(3), pp.~353-364.%51(3) (1998) 353–364.

\bibitem{Wa87}
Wagner, D.~H., 1987. Equivalence of the Euler and Lagrangian equations of gas dynamics for weak solutions. Journal of Differential Equations, 68(1), pp.~118-136. 

\end{enbibliography}

\end{document}